\newcommand{\ack}{\section*{Acknowledgments}}
\theoremstyle{plain}
\newtheorem{thm}{Theorem}[subsection]
\newtheorem{lem}[thm]{Lemma}
\theoremstyle{definition}
\newtheorem{prob}[thm]{Problem}
\numberwithin{equation}{subsection}
\newcommand{\Comment}[1]{}
\newcommand{\rbr}[1]{\left( {#1} \right)}
\newcommand{\sbr}[1]{\left[ {#1} \right]}
\newcommand{\cbr}[1]{\left\{ {#1} \right\}}
\newcommand{\abs}[1]{\left| {#1} \right|}
\newcommand{\norm}[1]{\left\|#1\right\|}
\newcommand{\eq}[1]{(\ref{#1})}
\newcommand{\ceil}[1]{\left\lceil#1\right\rceil}
\def\pa{Papadimitropoulos }
\def\CC{\mathbb{C}}
\def\RR{\mathbb{R}}
\def\QQ{\mathbb{Q}}
\def\ZZ{\mathbb{Z}}
\def\NN{\mathbb{N}}
\def\supp{\text{supp}}
\def\one{\mathbf{1}}
\def\forinfmany{\text{ for infinitely many }}
\begin{document}

\title{Explicit Salem sets, Fourier restriction, and metric Diophantine approximation in the $p$-adic numbers}
\author{Robert Fraser and Kyle Hambrook}



\maketitle

\begin{abstract}
We exhibit the first explicit examples of Salem sets in $\mathbb{Q}_p$ of every dimension $0 < \alpha < 1$ by showing 
that certain sets of well-approximable $p$-adic numbers are Salem sets. 
We construct measures supported on these sets that satisfy essentially optimal Fourier decay and upper regularity conditions, 
and we observe that these conditions imply that the measures satisfy strong Fourier restriction inequalities. 
We also partially generalize our results to higher dimensions.  
Our results extend theorems of Kaufman, Papadimitropoulos, and Hambrook from the real to the $p$-adic setting. 
\end{abstract}

\section{Introduction}\label{intro}

\subsection{Basic Notation}

For $x \in \RR^d$ and $r>0$, $|x| = \max_{1 \leq i \leq d} |x_i|$ and $B(x,r) = \cbr{y \in \RR^d : |x-y| \leq r}$.  
Throughout, $p$ denotes a fixed but arbitrary prime number and $\QQ_p$ is the field of $p$-adic numbers. 
The basics of $\QQ_p$ are reviewed in Section \ref{p-adic basics}. 
For $x \in \QQ_p$, $|x|_p$ is the $p$-adic absolute value of $x$.  
For $x \in \QQ_p^d$ and $r>0$, $|x|_p = \max_{1 \leq i \leq d} |x_i|_p$ and $B(x,r) = \cbr{y \in \QQ_p^d : |x-y|_p \leq r}$.  
The ring of integers of $\QQ_p$ is $\ZZ_p = B(0,1)$. 
The Fourier transform of a measure $\mu$ on $\RR^d$ or $\QQ_p^d$ is denoted $\widehat{\mu}$. 
Fourier analysis on $\QQ_p^d$ is reviewed in Section \ref{p-adic Fourier}. 
The expression $X \lesssim Y$ means $X \leq CY$ for some positive constant $C$ whose value may depend on $p$, but not on any other parameters. The expression $X \lesssim_{\alpha} Y$ has the same meaning, except the constant $C$ is permitted to depend also on a parameter $\alpha$. The expression $X \approx Y$ means both $X \lesssim Y$ and $Y \lesssim X$.

\subsection{Salem Sets and Fourier Dimension: The Real Setting}

It is well-known (see, for example, \cite{falconer-book-1}, \cite{mattila-book-1}) that the Hausdorff dimension 
$\dim_H(A)$ of a Borel set $A \subseteq \RR^d$ can 
be expressed 
in terms of the average Fourier decay of measures on $A$: 
\begin{align}\label{haus-form}
\dim_{H}(A) = \sup\cbr{ 0 \leq \alpha \leq d : \int_{\RR^d} |\widehat{\mu}(\xi)|^2 |\xi|^{\alpha-d} d\xi < \infty 
\quad \exists 
\mu \in \mathcal{P}(A)},
\end{align}
where $\mathcal{P}(A)$ denotes the set of Borel probability measures on $\RR^d$ with compact support in $A$.

The Fourier dimension $\dim_F(A)$ of a set $A \subseteq \RR^d$ is defined in terms of the pointwise Fourier decay of measures on $A$: 
\begin{align*}
\dim_{F}(A) = \sup\cbr{ 0 \leq \beta \leq d : \sup_{0 \neq \xi \in \RR^d} |\widehat{\mu}(\xi)|^2 |\xi|^{\beta} < \infty 
\quad \exists 
\mu \in \mathcal{P}(A)}.
\end{align*}

The Fourier dimension of a Borel set is always less than or equal to its Hausdorff dimension.
In general, they are not equal. 
In $\RR^d$ with $d \geq 2$, subsets of hyperplanes must have Fourier dimension 0, but the Hausdorff dimension may be any number between $0$ and $d-1$.
For $d=1$, the middle-thirds Cantor set in $\RR$ has Fourier dimension 0 and Hausdorff dimension $\ln 2 / \ln 3$.  
Some subtle properties of Fourier dimension are studied by Ekstr{\"o}m, Persson, and Schmeling \cite{EPS} and Fraser, Orponen, and Sahlsten \cite{FOS}.

A set whose Fourier and Hausdorff dimensions are equal is called a Salem set.

Points, spheres, and balls in $\RR^d$ 
are Salem sets of dimension $0$, $d-1$, and $d$, respectively. 
Salem sets are named for Rapha\"{e}l Salem \cite{salem}, who proved the existence of Salem sets in $\RR$ of every dimension $0 < \alpha < 1$ via a construction of  Cantor sets with random contraction ratios. 
Kahane \cite{kahane-book} proved the existence of Salem sets in $\RR^d$ of every dimension $0 < \alpha < d$ by considering 
trajectories  
of Brownian motion and more general stochastic processes. 
There are many other random constructions of Salem sets in $\RR^d$ (see \cite{bluhm-1}, \cite{chen-seeger}, \cite{ekstrom}, \cite{LP2009}, \cite{shmerkin-suomala}).

The random constructions of Salem sets mentioned above are unsatisfactory in that they do not give \textit{explicit} Salem sets. At best, they give families whose members are (with respect to some measure) almost all Salem sets.

Kaufman \cite{kaufman} was the first to give a construction of explicit Salem sets in $\RR$ of every dimension $0 < \alpha < 1$. His construction comes from number theory and is (arguably) simpler than the random constructions mentioned above.

For $\tau \in \RR$, the set of $\tau$-well-approximable real numbers is 
$$
E(\tau) = \cbr{x \in [-1,1] : |qx-r| \leq \max(|q|,|r|)^{-\tau} \forinfmany (q,r) \in \ZZ^2 }.
$$
A classic application of Dirichlet's pigeonhole principle is that $E(\tau)=[-1,1]$ when $\tau \leq 1$.
Jarn\'{\i }k \cite{jarnik-1} and Besicovitch \cite{bes} proved that $E(\tau)$ has Hausdorff dimension $2/(1+\tau)$ when $\tau > 1$.
Much further work has been done on metric properties of $E(\tau)$ and various generalizations of it. 
For details, we direct the reader to the recent works  
\cite{allen-troscheit}, 
\cite{beresnevich-bernik-dodson-velani}, 
\cite{beresnevich-dickinson-velani},
and references therein.

Kaufman \cite{kaufman} proved 
\begin{thm}[Kaufman]
\label{kauf thm}
For every $\tau > 1$, $E(\tau)$ is a Salem set of Hausdorff and Fourier dimension ${2}/({1 + \tau})$. Moreover, there exists a Borel probability measure $\mu$  supported on $E(\tau)$ such that 
$$
|\widehat{\mu}(\xi)| \lesssim |\xi|^{-1/(1+\tau)} \ln(1+|\xi|) \quad \forall \xi \in \RR, \xi \neq 0.
$$
\end{thm}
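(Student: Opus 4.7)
The plan is to construct an explicit Borel probability measure $\mu$ on $E(\tau)$ whose Fourier transform satisfies the claimed pointwise decay $|\widehat{\mu}(\xi)|\lesssim|\xi|^{-1/(1+\tau)}\ln(1+|\xi|)$. Once this is accomplished, the definition of $\dim_F$ gives $\dim_F(E(\tau))\geq 2/(1+\tau)$, while the classical Jarn\'{\i}k--Besicovitch theorem quoted above gives $\dim_H(E(\tau))=2/(1+\tau)$; since $\dim_F\leq\dim_H$ in general, $E(\tau)$ is a Salem set of the asserted dimension.

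For the measure, I would fix a nonnegative Schwartz function $\phi$ with $\int\phi=1$ and $\widehat{\phi}$ supported in $[-1,1]$, together with a smooth cutoff $\psi\geq 0$ supported in $[-1,1]$. For each dyadic integer $N\geq 1$, define
$$
\mu_N(x) = \frac{1}{Z_N}\,\psi(x)\sum_{q\sim N}\,\sum_{r\in\ZZ,\ |r|\leq 2q}\phi\!\left(q^{1+\tau}\!\left(x-\tfrac{r}{q}\right)\right),
$$
where $Z_N$ is chosen so that $\mu_N$ is a probability density; a direct computation yields $Z_N\approx N^{1-\tau}$. Each $\mu_N$ is supported in the $N^{-(1+\tau)}$-neighborhood of the rationals $r/q$ with $q\sim N$, so any weak-$\ast$ subsequential limit $\mu$ of $\{\mu_N\}$ is concentrated on the $\limsup$ of these neighborhoods, which is contained in $E(\tau)$.

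The central estimate is a uniform pointwise bound on $\widehat{\mu_N}$. Poisson summation yields the Fourier-series identity
$$
\sum_{r\in\ZZ}\phi\!\left(q^{1+\tau}\!\left(x-\tfrac{r}{q}\right)\right) = q^{-\tau}\sum_{m\in\ZZ}\widehat{\phi}(m q^{-\tau})\,e^{2\pi i m q x},
$$
so the contribution of a single $q$ to $\widehat{\mu_N}(\xi)$ is $Z_N^{-1}q^{-\tau}\sum_{|m|\leq q^{\tau}}\widehat{\phi}(m q^{-\tau})\,\widehat{\psi}(\xi-mq)$. Since $\widehat{\psi}$ is Schwartz, only lattice points $mq$ within $O(1)$ of $\xi$ contribute nontrivially, and summing over $q\sim N$ gives, up to a rapidly decaying tail,
$$
|\widehat{\mu_N}(\xi)|\ \lesssim\ \frac{1}{N}\,\#\bigl\{(q,m):\ q\sim N,\ m\in\ZZ,\ |mq-\xi|\leq 1\bigr\}.
$$
This counting quantity is bounded by the number of divisors of an integer near $\xi$ lying in $[N,2N]$.

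The choice $N\approx|\xi|^{1/(1+\tau)}$ then makes the bump-width $q^{-(1+\tau)}$ comparable to the oscillation scale $|\xi|^{-1}$, balancing this Poisson-type bound (valid for $|\xi|\gtrsim N^{1+\tau}$) against the trivial bound $|\widehat{\mu_N}|\leq 1$ (valid for $|\xi|\lesssim N^{1+\tau}$) and producing the sharp rate $|\xi|^{-1/(1+\tau)}$. To obtain a single measure $\mu$, one either extracts a weak-$\ast$ limit along a subsequence or takes a convex combination of the $\mu_N$ over a dyadic window; the pointwise bound descends to $\widehat{\mu}$. The main obstacle is the arithmetic bookkeeping behind the logarithmic factor: individual divisor counts can be as large as $|\xi|^{o(1)}$ rather than $\ln|\xi|$, so to obtain the stated logarithmic loss one must average over a dyadic range of $N$ and exploit the fact that the mean number of divisors of integers up to $X$ is $O(\ln X)$.
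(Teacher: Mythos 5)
Your proposal has the right flavor but two genuine gaps, both of which correspond to key ideas in Kaufman's actual proof (which this paper cites rather than reproves, and adapts to $\QQ_p$ in Section~\ref{proof thm 1}). The first is that the denominators $q$ must be restricted to \emph{primes}. Your Poisson reduction controls $|\widehat{\mu_N}(\xi)|$ by roughly $N^{-1}$ times the number of $q\sim N$ dividing an integer within $O(1)$ of $\xi$; without primality this is a divisor count in a dyadic window, which for a single integer near $\xi$ can be as large as $|\xi|^{o(1)}$. The repair you suggest --- average over a dyadic range of $N$ and invoke the $O(\ln X)$ mean of the divisor function --- does not work, because that mean-value estimate averages over the \emph{integer}, not over $N$, and cannot produce a pointwise bound in $\xi$; for fixed $\xi$ the dangerous contribution is concentrated at the single scale $N\approx|\xi|^{1/(1+\tau)}$. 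With $q$ restricted to primes in $[N,2N]$, the count is $\leq\log|\xi|/\log N=O(1)$, and the $\ln(1+|\xi|)$ in the theorem instead enters through the prime number theorem (there are $\approx N/\ln N$ primes in $[N,2N]$, so $Z_N$ picks up an extra $\ln N$).

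The second gap is the passage from the family $\{\mu_N\}$ to a single measure. A weak-$\ast$ subsequential limit of your $\mu_N$ is \emph{not} concentrated on $E(\tau)$; it is in fact $\psi\,dx/\!\int\!\psi$, which is absolutely continuous and assigns $E(\tau)$ measure zero. To see this, fix $\xi$ and note that in the Poisson formula $\widehat{\mu_N}(\xi)=Z_N^{-1}\sum_{q\sim N}q^{-\tau}\sum_m\widehat\phi(mq^{-\tau})\,\widehat\psi(\xi-mq)$, every term with $m\neq0$ has $|mq|\geq N\to\infty$ and so vanishes by the Schwartz decay of $\widehat\psi$, while the $m=0$ terms give $\widehat{\mu_N}(\xi)\to\widehat\psi(\xi)/\widehat\psi(0)$; by L\'{e}vy continuity this identifies the limit. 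A convex combination inherits the same defect, and it also places the support in a union of the $\supp\mu_N$ rather than a limsup. The remedy --- used by Kaufman and in this paper's proof of Theorem~\ref{thm 1} --- is a product construction: set $d\mu_k=\psi_0\,F_{M_1}\cdots F_{M_k}\,dx$, where each $F_M$ is a probability density built from bumps at $r/q$ with $q$ prime of size $\sim M$, and choose $M_1<M_2<\cdots$ growing so fast that the frequency supports of the $\widehat{F_{M_j}}$ are essentially disjoint annuli. Then $\supp\mu_k\subseteq\bigcap_{j\leq k}\supp F_{M_j}$, so the weak limit $\mu$ is supported in the nested compact intersection $\bigcap_j\supp F_{M_j}\subseteq E(\tau)$, and the convolution identity $\widehat{\mu_k}=\widehat{\mu_{k-1}}\ast\widehat{F_{M_k}}$ (cf.\ Lemma~\ref{mu-k lemma}) propagates the single-scale decay of each $\widehat{F_{M_j}}$ to a bound on $\widehat\mu(\xi)$ that is uniform over all frequencies.
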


All known constructions of explicit Salem sets in $\RR^d$ of dimension $\alpha \notin \cbr{0,d-1,d}$ are based on Kaufman's construction.  
Bluhm \cite{bluhm-2} and Hambrook \cite{hambrook-trans} generalized Kaufman's construction to show that some sets closely related to $E(\tau)$ are also Salem sets in $\RR$. 
Bluhm \cite{bluhm-2} also observed that the radial set $\cbr{x \in \RR^d : |x| \in E(\tau)}$ (here and nowhere else $| \; \cdot \;  |$ is the Euclidean norm on $\RR^d$) is a Salem set of dimension $d-1+2/(1+\tau)$ when $\tau > 1$. 
Hambrook \cite{hambrook-R2} generalized Kaufman's construction to give explicit Salem sets in $\RR^2$ of every dimension $0 < \alpha < 2$.

\subsection{Salem Sets and Fourier Dimension: The $p$-adic Setting}

Hausdorff dimension in $\QQ_p^d$ is defined exactly as it is in any metric space (see \cite{mattila-book-1}). 
The formula \eqref{haus-form} still holds 
(except $\RR^d$ is replaced by $\QQ_p^d$, and $| \xi |$ is replaced by $| \xi |_{p}$)
because 
the proof is based on Frostman's lemma (which holds in any locally compact metric space, see \cite{mattila-book-1}) and properties of the Riesz potential (which still hold in $\QQ_p^d$, see \cite{T75}). Papadimitropoulos \cite{pa-thesis} gives the details in case $d=1$; the proof for $d \geq 2$ is similar. 
The definitions of Fourier dimension and Salem set are as above (with the replacements mentioned).

\pa \cite{pa-p} (see also \cite{pa-thesis}, \cite{pa-local}) adapted Salem's \cite{salem} random Cantor-type construction to prove the existence of Salem sets in $\QQ_p$ of every dimension $0 < \alpha < 1$.

Our first main result, Theorem \ref{thm 0} below, gives explicit Salem sets in $\QQ_p$ of every dimension $0 < \alpha < 1$. It is a $p$-adic version of Theorem \ref{kauf thm}.

For $\tau \in \RR$, the set of $\tau$-well-approximable $p$-adic numbers is 
$$
W(\tau) = \cbr{x \in \ZZ_p : |xq-r|_p \leq \max(|q|,|r|)^{-\tau} \forinfmany (q,r) \in \ZZ^2 }.
$$
The set $W(\tau)$ is a p-adic analogue of $E(\tau)$. Note that the set $E(\tau)$ is unchanged if $\max(|q|,|r|)^{-\tau}$ is replaced by $|q|^{-\tau}$ in the definition. However, if the analogous replacement is made in the definition of $W(\tau)$, the set obtained equals $\ZZ_p$ for all $\tau$. 


For $\tau \leq 2$, $W(\tau) = \ZZ_p$ by Dirichlet's pigeonhole principle. 
For $\tau > 2$, Melni{\v{c}}uk \cite{melnicuk} (see also \cite{bovey-dodson}) proved $W(\tau)$ has Hausdorff dimension $2/\tau$.   

Our first main result is 

\begin{thm}
\label{thm 0}
For every $\tau>2$, $W(\tau)$ is a Salem set of Hausdorff and Fourier dimension ${2}/{\tau}.$ 
Moreover, 
there exists a Borel probability measure $\mu$ supported on $W(\tau)$ such that 
\begin{align*}
|\widehat{\mu}(\xi)| &\lesssim |\xi|_p^{-1/\tau} \ln^2(1+|\xi|_p)  \quad \forall \xi \in \QQ_p, \xi \neq 0.
\end{align*}
\end{thm}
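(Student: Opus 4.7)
The plan is to transpose Kaufman's construction of Theorem \ref{kauf thm} to the $p$-adic setting, using $p$-adic exponential-sum estimates in place of his classical ones. Since Fourier dimension is always bounded above by Hausdorff dimension, and Melni\v{c}uk has shown $\dim_H W(\tau) = 2/\tau$, the Salem property will follow once the existence of a probability measure $\mu$ on $W(\tau)$ with the stated Fourier decay is established, as this forces $\dim_F W(\tau) \geq 2/\tau$.

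For each large integer $n$, I would construct a smooth approximant to the indicator of $W(\tau)$. Let $\phi_n$ denote the $L^1$-normalized indicator of the $p$-adic ball $B(0, p^{-\lceil n\tau \rceil})$, so that $\widehat{\phi_n} = \one_{B(0, p^{\lceil n\tau \rceil})}$. Define
\begin{align*}
F_n(x) = c_n \one_{\ZZ_p}(x) \sum_{(q,r) \in A_n} \phi_n(qx-r),
\end{align*}
where $A_n$ consists of coprime pairs $(q,r) \in \ZZ^2$ with $p \nmid q$ and $p^{n-1} < \max(|q|,|r|) \leq p^n$, and $c_n > 0$ is chosen so that $\mu_n := F_n\, \nu$ (with $\nu$ the Haar probability measure on $\ZZ_p$) is a probability measure. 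The condition $p \nmid q$ ensures each summand $\phi_n(q \cdot - r)$ is naturally supported inside $\ZZ_p$, and $\mu_n$ is concentrated in an $O(p^{-\lceil n\tau \rceil})$-neighborhood of the $n$-th approximation set associated to $W(\tau)$.

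The crucial estimate is on $\widehat{\mu_n}(\xi)$. Applying the translation--dilation rule term-by-term gives
\begin{align*}
\widehat{\mu_n}(\xi) = c_n \sum_{(q,r) \in A_n} \widehat{\phi_n}(\xi/q)\, \chi(-\xi r/q),
\end{align*}
where $\chi$ is a fixed nontrivial additive character on $\QQ_p$ (the factor $1/|q|_p$ is $1$ since $p \nmid q$); the $\one_{\ZZ_p}$ cutoff contributes a harmless convolution by $\one_{\ZZ_p}$ on the Fourier side. The indicator $\widehat{\phi_n}$ restricts the relevant $q$ to those with $|\xi|_p \leq p^{\lceil n\tau \rceil}$. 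Decomposing the sum dyadically by the archimedean size $|q| \asymp p^j$ reduces the estimate, for each fixed $q$, to a $p$-adic Ramanujan-type sum $\sum_{r:(r,q)=1} \chi(-\xi r/q)$, which can be bounded by divisor-function techniques. Summing over $q$ with $|q| \asymp p^j$ and then over dyadic $j$ should yield $|\widehat{\mu_n}(\xi)| \lesssim |\xi|_p^{-1/\tau} \ln^2(1+|\xi|_p)$, the two logarithmic factors arising respectively from the dyadic sum in $j$ and from a divisor sum within each scale.

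Once the Fourier bound on $\mu_n$ is established uniformly in $n$, tightness on $\ZZ_p$ yields a weak-$*$ limit $\mu$ that inherits the pointwise Fourier estimate, and $\supp(\mu)$ lies in the $\limsup$ of the sets on which the $\mu_n$ are supported, which is exactly $W(\tau)$ by construction. The principal difficulty is the character-sum estimate: obtaining the sharp exponent $1/\tau$ requires a careful $p$-adic analogue of the classical Ramanujan-sum bound together with a tight accounting of how $|\xi|_p$ interacts with the archimedean size of $q$. It will also be necessary to verify that restricting to $p \nmid q$ in $A_n$ does not shrink the limiting support below $W(\tau)$; this should follow from a density argument showing that well-approximable $x \in \ZZ_p$ admit infinitely many approximants $r/q$ with $p \nmid q$.
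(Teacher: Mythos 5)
There is a genuine gap, and it is in the support argument, not in the Fourier estimate.

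You build a single-scale measure $\mu_n = F_n\,\nu$ for each $n$ and take a weak-$*$ limit $\mu$, asserting that $\supp(\mu)$ lies in the $\limsup$ of the supports and hence in $W(\tau)$. But the support of a weak-$*$ limit is only contained in the \emph{topological} limsup $\bigcap_{N}\overline{\bigcup_{n\geq N}\supp(\mu_n)}$, not the set-theoretic limsup that sits inside $W(\tau)$. In your construction these two differ badly: the centers $r/q$ with $(q,r)\in A_n$ become dense in $\ZZ_p$ as $n\to\infty$, so $\bigcup_{n\geq N}\supp(\mu_n)$ is dense in $\ZZ_p$ for every $N$, and the topological limsup is all of $\ZZ_p$. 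Thus the argument yields nothing about $\supp(\mu)$. (Heuristically the limit is close to Haar measure, whose support is $\ZZ_p$, not a set of dimension $2/\tau$.) The paper avoids this by making the supports nest: it fixes a rapidly increasing sequence $M_1<M_2<\cdots$ and uses the \emph{product} densities $d\mu_k=\psi_0\,F_{M_1}\cdots F_{M_k}\,dx$, so $\supp(\mu_k)$ is decreasing in $k$ and any weak limit is automatically supported in $\bigcap_k\supp(F_{M_k})\subseteq W(\tau)$. The price is that one must control the Fourier transform of a product, which becomes a convolution $\widehat{\mu_k}=\sum_t\widehat{F_{M_k}}(\cdot-t)\widehat{\mu_{k-1}}(t)$; the sparseness of $(M_k)$ is what keeps the extra factor bounded by a prescribed slowly growing $g$, and a separate Prohorov-limit reduction then removes $g$. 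This nested product structure is the missing idea in your proposal.

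Separately, the exponential-sum plan does not match the $p$-adic setting. The paper takes $q$ to be \emph{prime} in $[\tfrac12 p^M,p^M)$ with $|q|_p=1$ and sums over all $r\in\{0,\dots,p^M-1\}$; the inner sum $\sum_r e(\{rs/q\}_p)$ is then a complete geometric series modulo the $p$-power $|s|_p$, and the real work is a Cilleruelo--Garaev style count of those primes $q$ for which $\|\{s/q\}_p\|$ is small (primality is used to control divisors). Your coprimality condition on $(q,r)$ destroys the clean geometric series, and invoking ``Ramanujan-type sums'' imports a real-line intuition that does not carry over: for $e(\{rs/q\}_p)$ the effective modulus is $|s|_p$ (a power of $p$), not $q$, so the classical Ramanujan evaluation in terms of $\mu(\cdot)$ and $\varphi(\cdot)$ of $q$ does not apply. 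The framing of the Salem conclusion (upper bound from Melni\v{c}uk, lower bound from Fourier decay) and the observation that weak limits inherit pointwise Fourier bounds are both fine, but both the measure construction and the key exponential-sum lemma need to be rebuilt along the lines above.
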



Our two other main results, Theorem \ref{thm 1} and Theorem \ref{thm 2}, improve Theorem \ref{thm 0} in different ways.


%
%
%
%


\subsection{Upper Regularity and Fourier Restriction}

To discuss our first improvement to Theorem \ref{thm 0}, we state a general Stein-Tomas restriction theorem. 
\begin{thm}[Mockenhaupt-Mitsis-Bak-Seeger]
\label{mock-mit thm}
Let $0 < \alpha, \beta < d$. Let $\mu$ be a Borel probability measure on $\RR^d$ such that 
\begin{align}\label{a}
|\widehat{\mu}(\xi)| &\lesssim_{\beta} |\xi|^{-\beta/2}  \quad \forall \xi \in \RR^d, \xi \neq 0, \\
\label{b}
\mu(B(x,r)) &\lesssim_{\alpha} r^{\alpha} \quad \forall x \in \RR^d, r > 0.
\end{align}
Then, whenever $1 \leq q \leq 1+\beta/(4d-4\alpha+\beta)$, 
\begin{align}\label{c}
\rbr{\int |\widehat{f}(x)|^{2} d\mu(x)}^{1/2} \lesssim_{\alpha,\beta,q} \| f \|_{q} \quad \forall f \in L^q(\RR^d) \cap L^1(\RR^d).
\end{align}
\end{thm}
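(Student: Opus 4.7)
The plan is to follow the now-standard Tomas--Stein argument adapted to fractal measures (as in Mockenhaupt and Mitsis for the open range, and Bak--Seeger for the endpoint). By duality, the desired estimate \eqref{c} is equivalent to the extension inequality $\|(g\, d\mu)^{\vee}\|_{q'} \lesssim \|g\|_{L^2(\mu)}$, and the $TT^{*}$ identity then reduces matters to proving the convolution bound
\begin{equation*}
\| f * \widehat{\mu} \|_{q'} \lesssim \|f\|_q \quad \forall f \in \mathcal{S}(\RR^d).
\end{equation*}

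To analyze this convolution, I would fix a smooth radial bump $\phi$ supported in the annulus $\{1/2 \leq |\xi| \leq 2\}$, with $\sum_{j\geq 1}\phi(2^{-j}\xi)=1$ on $|\xi|\geq 1$, and the obvious smooth cutoff $\phi_0$ on $|\xi|\leq 1$. Decompose $\widehat{\mu} = \sum_{j\geq 0} K_j$ with $K_j(\xi) = \widehat{\mu}(\xi)\phi(2^{-j}\xi)$ for $j\geq 1$ and $K_0(\xi)=\widehat{\mu}(\xi)\phi_0(\xi)$. The low-frequency piece $K_0$ is bounded with compact support, so Young's inequality handles $f*K_0$ on every $L^p$. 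For $j \geq 1$, I would establish two bounds for the operator $T_j f := f * K_j$:
\begin{itemize}
\item[(i)] $L^1 \to L^{\infty}$: Directly from hypothesis \eqref{a}, $\|K_j\|_{\infty} \lesssim 2^{-j\beta/2}$, so $\|T_j f\|_{\infty}\lesssim 2^{-j\beta/2}\|f\|_1$.
\item[(ii)] $L^2 \to L^2$: Computing Fourier transforms, $\widehat{K_j}(x) = \int 2^{jd}\widehat{\phi}(2^j(x+y))\,d\mu(y)$. Splitting the integral into dyadic shells around $-x$ and using \eqref{b} with the Schwartz decay of $\widehat{\phi}$ yields $\|\widehat{K_j}\|_{\infty} \lesssim 2^{j(d-\alpha)}$, hence $\|T_j f\|_2 \lesssim 2^{j(d-\alpha)}\|f\|_2$.
\end{itemize}

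Riesz--Thorin interpolation at parameter $\theta \in (0,1)$ gives $\|T_j f\|_{2/\theta} \lesssim 2^{j[\theta(d-\alpha)-(1-\theta)\beta/2]} \|f\|_{(2/\theta)'}$. Summing the geometric series in $j$ requires the exponent to be strictly negative, which is equivalent to $\theta < \beta/(2d-2\alpha+\beta)$; unwinding this yields \eqref{c} for every $q < 1 + \beta/(4d-4\alpha+\beta)$, so the open range poses no obstacle.

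The main obstacle is the \emph{endpoint} $q = 1 + \beta/(4d-4\alpha+\beta)$, where Riesz--Thorin alone is lossy because the exponent in $2^{j[\cdots]}$ vanishes and the geometric sum diverges. To recover this case I would invoke the Bak--Seeger refinement: embed the family $\{T_j\}$ in an analytic family of operators with a well-chosen complex parameter $z$, obtain the $L^1 \to L^{\infty}$ and $L^2 \to L^2$ bounds on the two boundary lines of a strip (with slightly shifted growth rates that compensate for the loss of summability), and then apply Stein's complex interpolation in the Lorentz scale to conclude the restricted weak-type estimate at the endpoint exponent, from which \eqref{c} follows by real interpolation with a nearby interior estimate. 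Executing this analytic interpolation cleanly — in particular, keeping track of the admissible growth of the interpolating operators on the boundary — is the only genuinely nontrivial step; the two building-block estimates (i) and (ii) are direct consequences of hypotheses \eqref{a} and \eqref{b}.
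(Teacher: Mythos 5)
The paper does not prove Theorem~\ref{mock-mit thm}: it is stated as a known result and attributed, with the open range $1\leq q < 1+\beta/(4d-4\alpha+\beta)$ credited to Mockenhaupt \cite{mock} and Mitsis \cite{mitsis} and the endpoint to Bak and Seeger \cite{bak-seeger}. There is therefore no internal proof to compare against.

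That said, your sketch is a correct outline of the argument in those references. The $TT^{*}$ reduction to $\|f*\widehat{\mu}\|_{q'}\lesssim\|f\|_q$, the Littlewood--Paley decomposition $\widehat{\mu}=\sum_j K_j$, the bound $\|K_j\|_\infty\lesssim 2^{-j\beta/2}$ from \eqref{a}, the bound $\|\widehat{K_j}\|_\infty\lesssim 2^{j(d-\alpha)}$ from \eqref{b} via dyadic shells and Schwartz decay of $\widehat{\phi}$, and the Riesz--Thorin interpolation giving a negative exponent for $\theta<\beta/(2d-2\alpha+\beta)$ (equivalently $q<1+\beta/(4d-4\alpha+\beta)$) are all standard and correctly reproduced; your arithmetic translating the $\theta$-range into the $q$-range checks out. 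Your description of the endpoint as requiring the Bak--Seeger analytic-family refinement (rather than mere geometric summation, which fails at $\theta=\theta_0$) is also accurate, though you keep it at the level of a plan rather than executing the complex interpolation. One small remark relevant to the paper's actual use of the theorem: when the authors later invoke the $p$-adic analogue, they note that the endpoint there follows from the abstract interpolation theorem of Carbery, Seeger, Wainger, and Wright rather than from a direct transcription of Bak--Seeger, so the two endpoint mechanisms are related but not identical.
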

%
Note that \eqref{b} is called an upper regularity or Frostman condition, and \eqref{c} is called a Fourier restriction inequality (see \cite{mattila-book-2}, \cite{stein-book} for further background). 

Theorem \ref{mock-mit thm} was proved by Mockenhaupt \cite{mock} and Mitsis \cite{mitsis} for the range $1 \leq q < 1+\beta/(4d-4\alpha+\beta)$. The endpoint was proved by Bak and Seeger \cite{bak-seeger}.

\pa \cite{pa-thesis} extended Kaufman's \cite{kaufman} proof of Theorem \ref{kauf thm} to obtain 
\begin{thm}[Papadimitropoulos]
\label{pa thm}
For every $\tau > 1$, $E(\tau)$ is a Salem set with Hausdorff and Fourier dimension ${2}/({1 + \tau})$. Moreover, there exists a Borel probability measure $\mu$ supported on $E(\tau)$ such that 
\begin{align}\label{papadim a}
|\widehat{\mu}(\xi)| &\lesssim |\xi|^{-1/(1+\tau)} \ln(1+|\xi|) \quad \forall \xi \in \RR, \xi \neq 0, 
\\
\label{papadim b}
\mu(B(x,r)) &\lesssim r^{2/(1+\tau)} \ln(1+r^{-1}) \quad \forall x \in \RR, r > 0, 
\end{align}
%
and, whenever $1 \leq q < 1 + 1/(2(1+\tau) - 3)$,  
\begin{align}
\label{papadim c} \rbr{\int |\widehat{f}(x)|^{2} d\mu(x)}^{1/2} & \lesssim_{q,\tau} \| f \|_{q} \quad \forall f \in L^q(\RR) \cap L^1(\RR).
\end{align}
\end{thm}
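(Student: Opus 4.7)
The plan is to construct a single explicit Borel probability measure $\mu$ supported on $E(\tau)$ that satisfies both the Fourier decay bound \eqref{papadim a} and the Frostman condition \eqref{papadim b}; once this is done, \eqref{papadim c} is immediate from Theorem~\ref{mock-mit thm}, and the Salem property follows since \eqref{papadim a} forces $\dim_F E(\tau) \ge 2/(1+\tau)$ while the Jarn\'ik--Besicovitch theorem gives $\dim_H E(\tau)=2/(1+\tau)$.

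For the construction, I would follow Kaufman's auxiliary-function approach. Fix a smooth nonnegative bump $\phi$ on $\RR$ with $\int\phi=1$ and $\operatorname{supp}\phi\subset[-1,1]$, and for each large integer $N$ set
\[
f_N(x)=\frac{1}{Z_N}\sum_{N<q\le 2N}\;\sum_{|a|\le q}\phi\bigl(q^{1+\tau}(x-a/q)\bigr),\qquad Z_N=\int f_N\,dx.
\]
Along a sufficiently sparse sequence $N_k\to\infty$, a Borel--Cantelli argument ensures that any weak-$*$ subsequential limit $\mu$ of $f_{N_k}\,dx$ is a probability measure supported in $E(\tau)$. The Fourier bound \eqref{papadim a} is essentially Kaufman's: compute $\widehat{f_N}(\xi)$ by Poisson summation in the variable $a$; the rapid decay of $\widehat{\phi}$ concentrates the mass on denominators $q\approx|\xi|^{1/(1+\tau)}$, and a standard divisor-function count supplies a single logarithmic loss. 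The resulting bound is uniform in $N$ and passes to the limit by continuity of the Fourier transform under weak-$*$ convergence.

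The main new difficulty compared with Kaufman is the Frostman bound \eqref{papadim b} with only one logarithm of loss. My approach would be to bound $\mu(B(x_0,\rho))$ by a multi-scale count of the rational approximations that contribute to the measure: for each dyadic range $q\in[Q,2Q]$, the Farey spacing bound $|a/q-a'/q'|\ge (qq')^{-1}$ limits the number of fractions $a/q$ meeting $B(x_0,\rho+q^{-1-\tau})$ to $\lesssim 1+\rho Q^2$, and each such bump contributes mass $\approx q^{-1-\tau}$ before normalization. Summing over the dyadic scales of $Q$ and optimizing $Q$ against $\rho$ yields $\mu(B(x_0,\rho))\lesssim \rho^{2/(1+\tau)}\log(1+\rho^{-1})$, which transfers to the limit with only a constant-factor inflation of the ball. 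The delicacy here is that one must balance the Farey spacing, the bump scale $q^{-1-\tau}$, and the normalization $Z_N$ with just enough precision to keep the exponent $2/(1+\tau)$ sharp; a naive bookkeeping produces the worse exponent $1/(1+\tau)$. This is the main obstacle.

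Finally, with \eqref{papadim a} and \eqref{papadim b} established, I would apply Theorem~\ref{mock-mit thm} with $d=1$ and exponents $\alpha',\beta'$ slightly less than $2/(1+\tau)$ in order to absorb the logarithmic factors, and then let $\alpha',\beta'\to 2/(1+\tau)$. The endpoint computation
\[
1+\frac{\beta}{4d-4\alpha+\beta}\;=\;1+\frac{1}{2(1+\tau)-3}
\]
then yields \eqref{papadim c} on the open range $1\le q<1+1/(2(1+\tau)-3)$; the Bak--Seeger endpoint is unreachable precisely because of the logarithmic losses in the decay and regularity estimates.
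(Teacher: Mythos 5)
There is a genuine gap in your construction of the measure. You define $\mu$ as a weak-$*$ subsequential limit of $f_{N_k}\,dx$, where each $f_N$ is a single-scale sum of bumps centered at rationals $a/q$ with $q\in(N,2N]$. But for fixed $N$ those rationals equidistribute in $[-1,1]$, so $f_{N_k}\,dx$ converges weak-$*$ to normalized Lebesgue measure on $[-1,1]$; the limiting measure has full support and assigns measure zero to the null set $E(\tau)$. Borel--Cantelli does not rescue this: to conclude that $\mu$-a.e.\ $x$ lies in infinitely many of the approximation neighborhoods, you would need $\sum_k\mu\bigl([-1,1]\setminus\operatorname{supp}f_{N_k}\bigr)<\infty$, which fails for the weak limit. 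What is actually required is a \emph{product} construction $\mu=\lim_k \psi_0\,f_{N_1}\cdots f_{N_k}\,dx$ along a sufficiently lacunary sequence $(N_k)$, so that $\operatorname{supp}\mu\subseteq\bigcap_k\operatorname{supp}f_{N_k}\subseteq E(\tau)$. This is what Kaufman, Papadimitropoulos, and the $p$-adic analogue in Section~\ref{proof of 1a} (where $d\mu_k=\psi_0F_{M_1}\cdots F_{M_k}\,dx$) all do.

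Once you use the product, both of your subsequent estimates change character. The Fourier transform of the product measure is an iterated \emph{convolution} of the $\widehat{f_{N_j}}$, not just $\widehat{f_{N_k}}$; you need a lemma like Lemma~\ref{mu-k lemma}, which uses the sparseness of $(N_k)$ and the fact that each $\widehat{f_{N_j}}$ vanishes for low frequencies to show that at any frequency $\xi$ essentially one factor controls the bound. Your Poisson-summation-plus-divisor heuristic is the right computation for an individual $\widehat{f_N}$, but it does not by itself give the convolution estimate. Likewise the Frostman bound: the measure of $B(x_0,\rho)$ under the product involves all scales up to the one matching $\rho$, not a single dyadic $Q$, so your ``sum over dyadic $Q$ and optimize'' picture is not quite the right accounting. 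The paper handles this via the per-scale analysis in Lemma~\ref{reg thm k lemma}, using the fact that the $j$-balls are disjoint and counting how many $k$-balls (for $k>j$) survive inside each. Finally, to get the stated clean logarithmic factors without Kaufman's averaging over a range of $N$, the paper introduces the reduction trick in Section~\ref{reduction}: prove the bounds with an extra slowly-growing factor $g$, then send $g\to 1$ along a further weak limit. Your proposal does not mention this, and without either the averaging or this reduction, one typically loses an additional logarithm. The last paragraph of your proposal, where you apply Theorem~\ref{mock-mit thm} with exponents slightly below $2/(1+\tau)$ to absorb the logs, is correct and is exactly how the paper derives~\eqref{papadim c} from~\eqref{papadim a} and~\eqref{papadim b}.
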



Note that \pa actually proved a version of Theorem \ref{pa thm} with slightly weaker versions of \eqref{papadim a} and \eqref{papadim b}. However, by modifying the proof slightly and using the reduction technique of Section \ref{reduction} below, one may obtain Theorem \ref{pa thm} as stated.

By Theorem \ref{mock-mit thm}, (\ref{papadim c}) follows from (\ref{papadim a}) and (\ref{papadim b}). 
The main innovation of Theorem \ref{pa thm} over Theorem \ref{kauf thm} is the upper regularity property \eqref{papadim b}.

Theorem \ref{mock-mit thm} also holds in the $p$-adic setting; replace $\mathbb{R}^d$ by $\mathbb{Q}_p^d$ and $|\xi|$ by $|\xi|_p$ in the statement. 
The proof is translated from the real to the $p$-adic setting by replacing bump functions with indicator functions in a straightforward way.
See Papadimitropoulos \cite{pa-thesis} (or \cite{pa-local}) for details in the range $1 \leq q < 1 + {\beta}/({4 d - 4 \alpha + \beta})$. For the endpoint, as in \cite{bak-seeger}, one appeals to the powerful abstract interpolation theorem of Carbery, Seeger, Waigner, and Wright \cite[Section 6.2]{Carbery-Seeger-Waigner-Wright}.

Our second main result (and first improvement to Theorem \ref{thm 0}) is a $p$-adic version of Theorem \ref{pa thm}.

\begin{thm}
\label{thm 1}
For every $\tau > 2$, $W(\tau)$ is a Salem set with Hausdorff and Fourier dimension ${2}/{\tau}$. Moreover, there exists a Borel probability measure $\mu$ supported on $E(\tau)$ such that 
\begin{align}\label{p kauf a}
|\widehat{\mu}(\xi)| &\lesssim |\xi|_p^{-1/\tau} \ln^2(1 + |\xi|_p) \quad \forall \xi \in \mathbb{Q}_p, \xi \neq 0, 
\\
\label{p kauf b}
\mu(B(x,r)) &\lesssim r^{2/\tau} \ln^2(1+r^{-1}) \quad \forall x \in \mathbb{Q}_p, r > 0,  
\end{align}
%
and, whenever $1 \leq q < 1 + 1/(2\tau - 3)$, 
\begin{align}
\label{p kauf c} \rbr{\int |\widehat{f}(x)|^{2} d\mu(x)}^{1/2} & \lesssim_{q,\tau} \| f \|_{q} \quad \forall f \in L^q(\mathbb{Q}_p) \cap L^1(\mathbb{Q}_p).
\end{align}
\end{thm}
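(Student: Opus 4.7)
The plan is to prove Theorem \ref{thm 1} by adapting the Kaufman/\pa construction to the $p$-adic setting. Two reductions simplify the task: first, the restriction inequality (\ref{p kauf c}) will be an automatic consequence of (\ref{p kauf a}) and (\ref{p kauf b}) via the $p$-adic Mockenhaupt-Mitsis-Bak-Seeger theorem (Theorem \ref{mock-mit thm}, which holds in $\QQ_p^d$) applied with $d=1$ and $\alpha = \beta = 2/\tau$, since the critical exponent simplifies to $1 + \beta/(4d-4\alpha+\beta) = 1 + 1/(2\tau-3)$; second, the Salem property follows at once from (\ref{p kauf a}) together with Melni{\v{c}}uk's identity $\dim_H(W(\tau)) = 2/\tau$. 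The real task is thus to construct a probability measure $\mu$ supported on $W(\tau)$ satisfying (\ref{p kauf a}) and (\ref{p kauf b}).

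For the construction, for large $N$ (say a power of $p$) I would introduce the explicit measure
\begin{equation*}
\mu_N \;=\; c_N \sum_{\substack{q \in \ZZ,\ N \leq q < 2N \\ (q,p)=1}} \;\; \sum_{\substack{r \in \ZZ \\ 0 \leq r < q}} \one_{B(r/q,\,N^{-\tau})},
\end{equation*}
with $c_N \approx N^{\tau-2}$ chosen so that $\mu_N(\ZZ_p) = 1$. Each $x \in \supp(\mu_N)$ satisfies $|qx-r|_p \leq N^{-\tau}$ for some $(q,r)$ with $q \sim N$, so it is a single-scale $\tau$-approximant. Choosing a sufficiently rapidly growing sequence $N_k \to \infty$ and passing to a weak subsequential limit $\mu$ of the $\mu_{N_k}$, a Borel-Cantelli style argument (exploiting the uniform Frostman bound developed below, and the fact that $\supp(\mu_{N_k})$ is a single-scale approximation set at scale $N_k$) will ensure that $\mu$ can be taken supported on $W(\tau)$.

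The main obstacle is the Fourier bound (\ref{p kauf a}). Using the standard $p$-adic identity $\widehat{\one_{B(x_0,s)}}(\xi) = s\,\chi_p(-x_0\xi)\,\one_{B(0,s^{-1})}(\xi)$ and interchanging the sums with the Fourier transform, one gets $\widehat{\mu_N}(\xi) = 0$ for $|\xi|_p > N^\tau$, while for $|\xi|_p \leq N^\tau$
\begin{equation*}
\widehat{\mu_N}(\xi) \;=\; c_N \, N^{-\tau} \sum_{q}\,\sum_{r}\, \chi_p\!\left(-\tfrac{r\xi}{q}\right).
\end{equation*}
The inner sum over $r$ is a partial geometric sum in the root of unity $\chi_p(-\xi/q)$, bounded in the standard way by $\min(q,\, 1/\delta_q)$, where $\delta_q$ is the archimedean distance from the $p$-adic fractional part of $\xi/q$ to $\ZZ$. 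Classifying $q \in [N,2N)$ according to the interaction of its integer divisors with the $p$-adic denominator of $\xi$, and summing the resulting contributions dyadically, should yield $|\widehat{\mu_N}(\xi)| \lesssim |\xi|_p^{-1/\tau} \ln^2(1+|\xi|_p)$ uniformly in $N$. The two logarithmic factors reflect independent sums over (i) the $p$-adic part of $q$ interacting with the denominator of $\xi$, and (ii) an archimedean dyadic range of denominator sizes. This exponential-sum estimate is the heart of the proof and is the arithmetic analogue of the Kaufman-\pa calculation; the reduction technique of Section \ref{reduction} is then invoked to sharpen the resulting bound to the form stated in (\ref{p kauf a}).

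Finally, (\ref{p kauf b}) is obtained by splitting at the scale $N^{-\tau}$: for $r \geq N^{-\tau}$ the bound follows from (\ref{p kauf a}) via the standard Fourier-to-Frostman estimate (with matching logarithms), while for $r < N^{-\tau}$ it reduces to a direct count of the balls $B(r'/q,\,N^{-\tau})$ in $\supp(\mu_N)$ meeting $B(x,r)$, controlled by elementary divisibility considerations. Both the Fourier decay and the Frostman bounds are stable under the weak limit, yielding $\mu$ with the required properties.
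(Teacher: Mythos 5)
Your reduction of \eqref{p kauf c} to \eqref{p kauf a} and \eqref{p kauf b} via the $p$-adic Mockenhaupt--Mitsis--Bak--Seeger theorem with $\alpha=\beta=2/\tau$ is exactly right, and the general shape of the single-scale density $F_M$ and the geometric-sum treatment of the Fourier transform match the paper. But there are two genuine gaps that break the argument as written.

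First, the passage to a weak subsequential limit of the \emph{single-scale} measures $\mu_{N_k}$ does not produce a measure supported on $W(\tau)$. The supports $\supp(\mu_{N_k})$ live at unrelated scales, so the only set you can guarantee contains $\supp(\mu)$ is $\bigcap_j \overline{\bigcup_{k\ge j}\supp(\mu_{N_k})}$, which is all of $\ZZ_p$. Worse: as $N_k\to\infty$ the measures $\mu_{N_k}$ equidistribute on $\ZZ_p$, so their weak limit is Haar measure, which is certainly not supported on $W(\tau)$. No Borel--Cantelli argument repairs this; what is needed is a change in the construction itself. The paper takes $\mu_k$ with density $\psi_0\, F_{M_1}\cdots F_{M_k}$, so that $\supp(\mu_k)$ is a nested decreasing sequence of closed sets and the weak limit satisfies $\supp(\mu)\subseteq\bigcap_k\supp(F_{M_k})\subseteq W(\tau)$. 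The multi-scale product structure is not an optimization but the essential mechanism that puts the support inside the limsup set, and it is also what forces the convolution recursion (Lemma \ref{mu-k lemma}) that propagates the Fourier bound from one scale to the next.

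Second, your proposed derivation of the Frostman bound \eqref{p kauf b} ``from \eqref{p kauf a} via the standard Fourier-to-Frostman estimate'' loses a factor of $2$ in the exponent. A pointwise decay estimate $|\widehat{\mu}(\xi)|\lesssim|\xi|_p^{-\gamma}$ yields, by testing $\mu$ against a scale-$r$ bump (indicator) via Fourier inversion, only $\mu(B(x,r))\lesssim r^{\gamma}$; here $\gamma=1/\tau$, so this route gives $\mu(B(x,r))\lesssim r^{1/\tau}$, far weaker than the required $r^{2/\tau}$. (The factor $2$ in ``Fourier dimension $=2\gamma$'' comes from an $L^2$/energy argument, which does not upgrade to a uniform pointwise Frostman bound at exponent $2\gamma$.) The exponent $2/\tau$ has to be extracted by direct geometry: one counts the number $J$ of $j$-balls of the construction meeting a given $B(x,p^{-\ell})$ via the separation Lemma \ref{disjoint balls} and the congruence count in Lemma \ref{non-i-ball lemma}, and then iterates down the product via Lemma \ref{i-ball lemma}. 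This is where the paper uses that its denominators $q$ are \emph{primes} in $[\tfrac12 p^M,p^M)$ and that $r$ ranges over $[0,p^M)$: primality is what makes $r/q=r'/q'$ force both fractions to be the integers $0$ or $1$ (Lemma \ref{FM bound lemma}), and it is also what the Cilleruelo--Garaev divisor count uses in the exponential-sum estimate. Your choice of all $q$ coprime to $p$ in a dyadic range with $r\in[0,q)$ may be salvageable for the Fourier bound in the spirit of Kaufman's real argument, but it would require a replacement for both of these prime-specific steps, and as stated neither \eqref{p kauf a} nor \eqref{p kauf b} is established.
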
 

By the $p$-adic version of Theorem \ref{mock-mit thm}, (\ref{p kauf c}) follows from (\ref{p kauf a}) and (\ref{p kauf b}). 
The main innovation of Theorem \ref{thm 1} over Theorem \ref{thm 0} is the upper regularity property \eqref{p kauf b}. 

Mockenhaupt \cite{mock} (see also \cite{mock-thesis}) proved a version of Theorem \ref{pa thm} for the sets and measures constructed by Salem \cite{salem}. 
Mockenhaupt and Ricker \cite{mock-ricker} then used this theorem to establish an optimal extension of the Hausdorff-Young inequality on the torus $\mathbb{T}$ (which may be identified with $[-1,1]$). 
\pa \cite{pa-p} (see also \cite{pa-thesis}, \cite{pa-local}) proved a 
version of Theorem \ref{thm 1} for the sets and measures given by his $p$-adic analogue of Salem's construction. \pa used that theorem in a manner similar to that of Mockenhaupt and Ricker to establish an optimal extension of the Hausdorff-Young inequality on $\mathbb{Z}_p$.

%
%
%
%
%
%
%


\subsection{Multiple Dimensions}

Our second improvement to Theorem \ref{thm 0} generalizes it to multiple dimensions. 

For $m,n \in \NN$, we identify the $m \times n$ matrix whose $ij$-th entry is $x_{ij}$ with the point 
$$
x=(x_{11},\ldots,x_{1n},\ldots,x_{m1},\ldots, x_{mn}). 
$$

We first consider a multi-dimensional generalization of $E(\tau)$.  For $\tau \in \RR$, we define 
\begin{gather*}
E(m,n,\tau) = \\
\cbr{ x \in [-1,1]^{mn}: \|xq-r\|_p  \leq \max(|q|,|r|)^{-\tau} \forinfmany (q,r) \in \ZZ^n \times \ZZ^m }.
\end{gather*}
By Minkowski's theorem on linear forms, $E(m,n,\tau) = \RR^{mn}$ when $\tau \leq n/m$. 
Bovey and Dodson \cite{bovey-dodson} showed that the Hausdorff dimension of $E(m,n,\tau)$ is $m(n - 1) + (m + n)/(1 + \tau)$ when $\tau > n/m$. 
The $n=1$ case was done earlier by Jarn{\'\i}k \cite{jarnik-2} and Eggleston \cite{eggleston}. 
 
We mentioned above that Hambrook \cite{hambrook-trans} generalized Kaufman's construction to show that certain sets in $\RR$ closely related to $E(\tau)$ are Salem sets.
In the same paper, 
Hambrook also considered $E(m,n,\tau)$ and proved a version of the following theorem. 
\begin{thm}[Hambrook]
\label{hambrook thm}
For every $\tau>n/m$, there exists a Borel probability measure $\mu$ supported on $E(m,n,\tau)$ such that 
\begin{align*}
|\widehat{\mu}(\xi)| &\lesssim |\xi|^{-n/(1+\tau)} \ln^{n}(1+|\xi|) \quad \forall \xi \in \RR^{mn}, \xi \neq 0. \\
\end{align*}
\end{thm}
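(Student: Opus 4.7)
The plan is to adapt Kaufman's construction to the matrix setting. Fix a smooth bump $\phi \in C_c^\infty(\RR^m)$ with $\int \phi = 1$ and $\supp \phi \subset [-1/2,1/2]^m$, write $\phi_\delta(y)=\delta^{-m}\phi(y/\delta)$, and let $H_Q(y) = \sum_{r \in \ZZ^m} \phi_{Q^{-\tau}}(y-r)$ be the $\ZZ^m$-periodisation of $\phi_{Q^{-\tau}}$; this is a smooth function concentrated in bumps of width $Q^{-\tau}$ about each integer point, with unit mass per period. Fix also a Schwartz cutoff $\chi$ equal to $1$ on $[-1,1]^{mn}$ and supported in $[-2,2]^{mn}$, and form the single-scale density
\[
F_Q(x) \;=\; \frac{1}{Q^n}\, \chi(x) \sum_{\substack{q \in \ZZ^n \\ Q \leq |q| < 2Q}} H_Q(xq).
\]
By construction $\int F_Q \approx 1$, and every $x \in \supp F_Q$ satisfies $|xq - r| \leq Q^{-\tau}$ for some integer pair $(q,r)$ with $|q| \sim Q$. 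The measure $\mu$ will be obtained as a weak-$*$ limit of the $F_{Q_k}(x)\,dx$ along a rapidly increasing sequence $(Q_k)$; a standard sparseness argument of Borel--Cantelli type then gives $\supp \mu \subset E(m,n,\tau)$.

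The crux is the single-scale Fourier estimate. Expanding $H_Q$ in Fourier series yields
\[
\widehat{F_Q}(\xi) \;=\; \frac{1}{Q^n} \sum_{q \sim Q}\ \sum_{s \in \ZZ^m} \widehat{\phi}(Q^{-\tau} s)\, \widehat{\chi}(\xi - sq^T),
\]
where $sq^T$ denotes the $m \times n$ rank-one matrix with entries $s_i q_j$. For $|\xi| \approx X$, the Schwartz decay of $\widehat{\chi}$ localises the sum to pairs with $sq^T$ close to $\xi$, forcing $|s|\,|q| \approx X$ and hence $|s| \approx X/Q$; the Schwartz decay of $\widehat{\phi}$ in turn truncates at $|s| \lesssim Q^\tau$. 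These balance at the critical scale $Q \sim X^{1/(1+\tau)}$, where the normalising factor $Q^{-n}$ supplies the target $X^{-n/(1+\tau)}$. For each fixed $q \sim Q$, the constraint $sq^T \approx \xi$ determines $s$ up to $O(1)$ from any column of $\xi$ with $q_j \neq 0$, and consistency across the columns confines $q$ to the integer multiples of the primitive rank-one direction best approximating $\xi$; the resulting matrix-divisor count furnishes the $\ln^n(1+X)$ loss.

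To conclude, one picks $(Q_k)$ growing so quickly (e.g.\ $Q_{k+1} \gg Q_k^C$ for a large $C=C(m,n,\tau)$) that for each fixed $\xi$ only the single scale $Q_k$ closest to $|\xi|^{1/(1+\tau)}$ makes a non-negligible contribution, so that the single-scale bound transfers to $\mu$ and yields $|\widehat{\mu}(\xi)| \lesssim |\xi|^{-n/(1+\tau)} \ln^n(1+|\xi|)$. The hardest step is the matrix divisor count inside the Fourier estimate: one must group the pairs $(s,q)$ by the primitive rank-one direction approximating $\xi$, control the Schwartz tails of both $\widehat{\phi}(Q^{-\tau}s)$ and $\widehat{\chi}(\xi - sq^T)$ when $sq^T$ is only near-rank-one, and sum over divisors so that the losses aggregate into $\ln^n(1+|\xi|)$ rather than a small power of $|\xi|$. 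This is the multi-dimensional analogue of the Gauss-sum estimate in Kaufman's original proof, and is where both the higher-dimensional structure of $E(m,n,\tau)$ and the precise exponent $n$ on the logarithm genuinely enter.
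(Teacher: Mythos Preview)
The paper does not itself prove this theorem; it attributes it to Hambrook and instead proves the $p$-adic analogue (Theorem \ref{thm 2}). Comparing your sketch to that proof (and to Hambrook's, which it parallels), there are two genuine gaps.

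First, the construction of $\mu$. You propose $\mu$ as a weak-$*$ limit of the \emph{single-scale} densities $F_{Q_k}(x)\,dx$. This cannot work: for fixed $\xi$, once $Q_k \gg |\xi|^{1/(1+\tau)}$ only the $s=0$ term survives in your Fourier expansion and $\widehat{F_{Q_k}}(\xi) \to \widehat{\chi}(\xi)$, so $F_{Q_k}\,dx \to \chi(x)\,dx$ weakly, which is not supported on the null set $E(m,n,\tau)$. No Borel--Cantelli argument repairs this. In Kaufman's construction and in the paper's (Section \ref{proof of 2a}), the approximating measures are \emph{products} $\psi_0 \prod_{i \le k} F_{M_i}\,dx$; the support is then contained in $\bigcap_k \supp F_{M_k} \subset E(m,n,\tau)$, and the weak-$*$ limit of the partial products inherits this. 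The product structure is not cosmetic: it is what pins the support to the $\limsup$ set, and it is also what makes $\widehat{\mu}$ a convolution in which the rapid growth of $(M_k)$ can be exploited (cf.\ Lemma \ref{mu-k lemma mn}).

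Second, the restriction on $q$. You sum over all integer vectors $q$ with $|q|\sim Q$ and claim a ``matrix divisor count'' delivers the $\ln^n$ loss. But take $n=1$: there is only one column, your consistency constraint is vacuous, and after the Schwartz localisation you are simply counting $q\sim Q$ that divide some integer within $O(1)$ of each $\xi_i$. That count is governed by the ordinary divisor function and is $|\xi|^{o(1)}$, not $O(\log|\xi|)$; so at best you get $|\widehat{F_Q}(\xi)|\lesssim |\xi|^{-1/(1+\tau)+o(1)}$, missing the stated bound. Kaufman, Hambrook, and the paper (Section \ref{proofs of lemmas mn 2}) avoid this by restricting each coordinate $q_j$ to be \emph{prime} in $[\tfrac12 p^M, p^M)$: an integer $N\approx|\xi|$ has at most $O(\log|\xi|/\log Q)$ prime factors of size $\sim Q$, which is what converts the divisor bound into a logarithm. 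For $n\ge 2$ the paper then shows that $q\mapsto q_{j_0}$ is injective on the set of contributing $q$'s, reducing to the one-variable prime-divisor count; your ``consistency across columns'' is a shadow of this rigidity, but without the prime restriction it does not produce the stated logarithmic exponent, and for $n=1$ it produces nothing at all.
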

Technically, 
Theorem \ref{hambrook thm} 
as stated does not appear in \cite{hambrook-trans}. 
However, the proof of Theorem 1.2 of \cite{hambrook-trans} is easily modified to obtain Theorem \ref{hambrook thm}.   
Theorem \ref{hambrook thm} is not strong enough to determine whether $E(m,n,\tau)$ is a Salem set. 
However, it does imply that the Fourier dimension 
of $E(m,n,\tau)$ is at least $2n/(1 + \tau)$. 

We now consider a $p$-adic analogue of $E(m,n,\tau)$ that is a multi-dimensional generalization of $W(\tau)$.  
For $\tau \in \RR$, we define 
\begin{gather*}
W(m,n,\tau) = \\
\cbr{ x \in \ZZ_p^{mn}: \|xq-r\|_p  \leq \max(|q|,|r|)^{-\tau} \forinfmany (q,r) \in \ZZ^n \times \ZZ^m }.
\end{gather*}
Dirichlet's pigeonhole principle 
implies $W(m,n,\tau) = \ZZ_p^{mn}$ when $\tau \leq (m+n)/m$. 
Abercrombie \cite{Abercrombie} showed that the Hausdorff dimension of $W(m,n,\tau)$ is $m(n-1) + (m+n)/\tau$ when $\tau > (m+n)/m$.


Our third 
main result (and second improvement to Theorem \ref{thm 0}) is a $p$-adic version of Theorem \ref{hambrook thm}. 

\begin{thm}
\label{thm 2}
For every $\tau>(m+n)/m$, there exists a Borel probability measure $\mu$ supported on $W(m,n,\tau)$ such that 
\begin{align*}
|\widehat{\mu}(\xi)| &\lesssim |\xi|_p^{-n/\tau} \ln^{n+1}(1+|\xi|_p) \quad \forall \xi \in \QQ_p^{mn}, \xi \neq 0. 
\end{align*}
\end{thm}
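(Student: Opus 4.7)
The plan is to adapt Hambrook's proof of Theorem \ref{hambrook thm} to the $p$-adic setting, exploiting the exact algebraic structure of $p$-adic Fourier analysis---in particular, that the Fourier transform of the indicator function of a $p$-adic ball is, up to a scalar, again a ball indicator. As in the real case, $\mu$ will be built as a countable convex combination $\mu = \sum_{N \geq N_0} c_N \mu_N$ of ``scale-$N$ building-block'' measures, each supported on a finite-scale approximation to $W(m,n,\tau)$, with weights $c_N$ chosen so that a Borel--Cantelli style argument places $\supp \mu \subseteq W(m,n,\tau)$ and so that the scale-sum of Fourier estimates converges.

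At scale $N$, fix $\tau' > \tau$ and let $A_N \subseteq \ZZ^n \times \ZZ^m$ consist of those pairs $(q,r)$ with $\max(|q|,|r|) \in [p^{N-1}, p^N)$ and with $q$ primitive mod $p$ (so $|q|_p = 1$). Put
$$\mu_N = \frac{p^{N m \tau'}}{|A_N|} \sum_{(q,r) \in A_N} \one_{\ZZ_p^{mn}}(x) \, \one_{\{\|xq - r\|_p \leq p^{-N\tau'}\}}(x) \, dx.$$
Since $\|xq-r\|_p = \max_i |x_i \cdot q - r_i|_p$, where $x_i \in \ZZ_p^n$ denotes the $i$-th row of $x$, each summand factors across rows, and so does its Fourier transform on $\QQ_p^{mn}$. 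Each per-row factor is computed by an invertible $\ZZ_p$-linear change of coordinates sending $q$ to a coordinate axis (possible because $|q|_p = 1$), followed by the standard identity $\widehat{\one_{B(0,p^{-k})}}(\eta) = p^{-k} \one_{B(0,p^{k})}(\eta)$. The outcome is that each $(q,r) \in A_N$ contributes an oscillatory factor depending on $r$ and $\xi$, multiplied by the indicator that every row $\xi_i$ of $\xi$ is $p$-adically parallel to $q$ at resolution $p^{-N\tau'}$.

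Summing over $A_N$ then reduces the estimate on $\widehat{\mu_N}(\xi)$ to two ingredients: a counting bound on the pairs $(q,r) \in A_N$ whose $q$ is simultaneously aligned with every $\xi_i$, and cancellation in the resulting character sum over $r$. A fixed $\xi \neq 0$ forces $q$ into a small number of $p$-adic lines, which should produce the per-scale bound $|\widehat{\mu_N}(\xi)| \lesssim |\xi|_p^{-n/\tau} \ln^{n}(1+|\xi|_p)$ on the relevant dyadic range of $\xi$; for $\xi$ outside that range one uses the trivial bound together with the scale separation. Summing over $N$ with a suitable choice of weights $c_N$ costs one further logarithm, producing the target bound $|\widehat{\mu}(\xi)| \lesssim |\xi|_p^{-n/\tau} \ln^{n+1}(1+|\xi|_p)$.

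The main obstacle is this counting and cancellation step, which is the $p$-adic multi-dimensional analogue of the integer-point counting at the heart of Theorems \ref{kauf thm} and \ref{hambrook thm}. The $p$-adic setting removes the bump-function technicalities present in the real case, but the combinatorics of simultaneous alignment across $m$ rows---balanced against the oscillation available in the $r$-sum---is what determines both the sharp exponent $n/\tau$ and the correct power of the logarithm, and is the technical heart of the argument.
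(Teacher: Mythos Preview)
Your proposal has a structural gap at the support step that cannot be repaired within a convex-combination framework. If $\mu=\sum_{N} c_N \mu_N$ with $c_N>0$, then $\supp\mu \supseteq \supp\mu_{N}$ for every $N$; but each $\supp\mu_{N}$ is a finite union of balls in $\ZZ_p^{mn}$ and so has positive Haar measure, whereas $W(m,n,\tau)$ has Hausdorff dimension $m(n-1)+(m+n)/\tau<mn$ and hence Haar measure zero. Thus $\supp\mu\not\subseteq W(m,n,\tau)$, regardless of the weights. A Borel--Cantelli argument gives at best that $\mu$-almost every point lies in $W(m,n,\tau)$; this is not what the theorem demands (compact support \emph{in} the set, as in the definition of $\mathcal{P}(A)$), and restricting $\mu$ to a compact subset of $W(m,n,\tau)$ afterwards would destroy the Fourier decay you have just worked to obtain. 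This is also not how the argument in \cite{hambrook-trans} is organized.

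The paper follows Kaufman and Hambrook in using a Riesz-product construction: $d\mu_k = \psi_0\, F_{M_1}\cdots F_{M_k}\,dx$ for a rapidly growing sequence $(M_k)$, with $\mu$ a weak limit. The product forces $\supp\mu\subseteq\bigcap_k\supp F_{M_k}\subseteq W(m,n,\tau)$ automatically. The single-scale estimate (Lemma~\ref{FM lemma mn}) is $|\widehat{F_M}(s)|\lesssim|s|_p^{-n/\tau}\ln^{n+1}|s|_p$ on the shell $p^{M}<|s|_p\leq p^{\ceil{\tau M}}$, together with the exact vanishing $\widehat{F_M}(s)=0$ off that shell; the vanishing makes the convolution $\widehat{\mu_k}=\widehat{F_{M_k}}\ast\widehat{\mu_{k-1}}$ effectively diagonal in scale, so no logarithm is lost in passing from $F_M$ to $\mu$. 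Two further points your sketch would have to address: the coordinates of $q$ must be \emph{prime}, not merely $p$-adic units, since primality drives the divisor-counting behind the exponential-sum bound; and the exponent $n+1$ on the logarithm already appears at the single-scale level (one more than in the real case of Theorem~\ref{hambrook thm}, coming from that exponential-sum estimate), not from summing over scales.
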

Theorem \ref{thm 2} is not strong enough to determine whether $W(m,n,\tau)$ is a Salem set. 
However, it does imply that the Fourier dimension 
of $W(m,n,\tau)$ is at least $2n/\tau$.

By modifying the proof in a straightforward way, it is possible to generalize Theorem \ref{thm 2} even further along the lines of Theorem 1.2 of Hambrook \cite{hambrook-trans}. However, for simplicity, we do not pursue this here.

\subsection{Problems for Future Study}

\begin{prob}\label{prob 1}
For $d \geq 2$, construct Salem sets in $\QQ_p^d$ of every dimension $0 < \alpha < d$. The existence of such sets is unknown. Kahane's \cite{kahane-book} stochastic constructions and Bluhm's \cite{bluhm-1} Cantor-type construction of Salem sets in $\RR^d$ are good candidates for adaptation to the $p$-adic setting.
\end{prob}

\begin{prob}
Determine the Fourier dimension of $W(m,n,\tau)$ when $\tau > (m+n)/n$ and $mn > 1$. As mentioned above, the Hausdorff dimension of $W(m,n,\tau)$ is known to be $m(n-1)+(m+n)/\tau$, and Theorem \ref{thm 2} implies the Fourier dimension of $W(m,n,\tau)$ is at least $2n/\tau$. By improving on the method of the present paper, perhaps it is possible to show that $\dim_F W(m,n,\tau) \geq m(n-1)+(m+n)/\tau$, hence proving that $W(m,n,\tau)$ is Salem. Note that this would also resolve Problem \ref{prob 1}. On the other hand, it would be interesting to obtain an upper bound on $\dim_F W(m,n,\tau)$ that is strictly less than the Hausdorff dimension, as such upper bounds appear to be difficult. 
The analogous problem for $E(m,n,\tau)$ is also open.
\end{prob}

\begin{prob}
Prove an analogue of Theorem \ref{thm 1} for $W(m,n,\tau)$. 
In other words, prove Theorem \ref{thm 2} with an analogue of the upper regularity property \eqref{p kauf b} (an analogue of \eqref{p kauf c} would follow immediately from the $p$-adic version of Theorem \ref{mock-mit thm}).  
The analogue of the upper regularity property \eqref{p kauf b} 
would take the form 
$$
\mu(B(x,r)) \lesssim r^{\alpha} \quad \forall x \in \QQ_{p}^{mn}, r > 0.
$$
In the case $m > n = 1$, the best possible exponent $\alpha$ is $\alpha = (m+1)/\tau$. 
The method of proof of Theorem \ref{thm 1} can be extended to obtain 
this, 
but we must assume $\tau > (m+1)/m + 1 - 1/m^2$. 
In full range $\tau > (m+1)/m$, we are only able to obtain $\alpha = m/\tau$. 
The case $n > 1$ is completely open. 
The analogous problem for $E(m,n,\tau)$ 
The analogous problem for $E(m,n,\tau)$ 
is also interesting to consider. 
\end{prob}

\begin{prob}
Prove versions of Theorem \ref{thm 0}, Theorem \ref{thm 1}, and Theorem \eqref{thm 2} in the setting of an arbitrary ultrametric local field. Note that every local field is isomorphic to either $\RR$, $\CC$, $\QQ_p$ (for some prime $p$), a finite extension of $\QQ_p$ (for some prime $p$), or the field of formal Laurent series over some finite field, and $\RR$ and $\CC$ are not ultrametric. 
\pa \cite{pa-thesis}, \cite{pa-local} extended Salem's \cite{salem} random Cantor-type construction to prove, for any ultrametric local field $K$, the existence of Salem sets of every dimension $0 < \alpha < 1$ in $K$. Moreover, \pa \cite{pa-thesis}, \cite{pa-local} proved a version of Theorem \ref{thm 1} in $K$ for the sets and measures produced by his construction. 
\end{prob}

\subsection{Structure of the Paper}

In Section \ref{p-adic}, we review the definition and basics properties of the $p$-adic numbers as well as the necessary elements of Fourier analysis on the $p$-adics. 
In Sections \ref{proof thm 1} and \ref{proof thm 2}, we prove Theorems \ref{thm 1} and \ref{thm 2}, respectively. 
Theorem \ref{thm 0} is an immediate corollary of both Theorem \ref{thm 1} and \ref{thm 2}. 

\subsection{Remarks on the Proofs}

The proof of Theorem \ref{thm 1} is a reasonably straightforward adaptation of \pa's \cite{pa-thesis} proof of Theorem \ref{pa thm}, which in turn is an extension of Kaufman's \cite{kaufman} proof of Theorem \ref{kauf thm}, from the real to the $p$-adic setting.  
%
In essence, the adaptation strategy is to replace a bump function that is $1$ on $[-1,1]=B(0,1) \subseteq \RR$ by the indicator function of $\ZZ_p=B(0,1) \subseteq \QQ_p$. The details, however, are not completely straightforward. 
In establishing \eqref{p kauf a}, we encounter (in the proof of Lemma \ref{FM lemma} below) a non-trivial exponential sum. We estimate the exponential sum by a method inspired by Theorem 1 in Cilleruelo and Garaev's paper \cite{cill-gar}. 
No such obstacle is encountered in the real setting. 
Establishing \eqref{p kauf b} is also somewhat different than in the real setting because of the unusual geometry of the $p$-adic numbers. 

Note that the reduction technique of Section \ref{reduction} below, while simple,  appears to be new. It allows us to obtain the strong Fourier decay and upper regularity inequalities \eqref{p kauf a} and \eqref{p kauf b} without the averaging technique of Kaufman \cite{kaufman}. Using Kaufman's averaging technique would make proving \eqref{p kauf b}, even in a weaker form, significantly more complicated. \pa \cite{pa-thesis} did not use Kaufman's averaging argument to prove his version of Theorem \ref{pa thm}, which (as we mentioned above) has weaker forms of \eqref{p kauf a} and \eqref{p kauf b}.

The proof of Theorem \ref{thm 2} is a generalization of the proof of Theorem \ref{thm 1} (without the upper regularity property \eqref{p kauf b}), following the ideas of \cite{hambrook-trans}.

\section{The field $\QQ_p$ of $p$-adic Numbers}\label{p-adic}

\subsection{Definition and Basic Properties}\label{p-adic basics}

Every non-zero $x \in \QQ$ can be expressed uniquely in the form $x=p^{M}a/b$ where $a,b,M$ are integers with $a$ and $b$ coprime to $p$ and $b \geq 1$. 
The $p$-adic absolute value of $x$ is defined to be $|x|_p = p^{-M}$. We define $|0|_p = 0$. The completion of $\QQ$ with respect to the $p$-adic absolute value is the field of $p$-adic numbers $\QQ_p$. 
Every non-zero 
$x \in \QQ_p$   
can be expressed uniquely in the form 
\begin{align}\label{p-adic expansion}
x = \sum_{j=M}^{\infty} c_j p^j,
\end{align}
where $M \in \ZZ$, $c_j \in \cbr{0,1,\ldots,p-1}$, and $c_M \neq 0$. 
We call \eq{p-adic expansion} the $p$-adic expansion of $x$. 
The $p$-adic absolute value of $x$ is $|x|_p = p^{-M}$. 
This extends the definition of the $p$-adic absolute from $\QQ$ to $\QQ_p$. 
It is sometimes helpful to know that $|x|_p \geq |x|^{-1}$ for all non-zero $x \in \QQ$.



The $p$-adic norm of $x \in \QQ_p^d$ is $|x|_p = \max_{1 \leq i \leq d} |x_i|_p$.

The closed ball with radius $r > 0$ and center $a \in \QQ_p^d$ is $B(a,r) = \cbr{x \in \QQ_p^d : |x - a|_p \leq r}$. 
Since the $p$-adic norm takes values in $\cbr{p^k: k \in \ZZ} \cup \cbr{0}$, it follows that 
$B(a,r) = \cbr{x \in \QQ_p^d : |x-a|_p \leq p^{k}} = \cbr{x \in \QQ_p^d : |x-a|_p < p^{k+1}}$ whenever $p^k < r \leq p^{k+1}$, 
and the indicator function $\one_{B(a,r)}$ is continuous. 
Analogous statements hold for the open ball $B(a,r^{-}) = \cbr{x \in \QQ_p^d : |x - a|_p < r}$. 
%
%

The $p$-adic norm satisfies a strong form of the triangle inequality:  
\begin{align*}
|x-y|_p 
\leq \max(|x|_p,|y|_p) \qquad \forall x,y\in \QQ_p, \text{ with equality whenever } |x|_p \neq |y|_p.
\end{align*} 
This inequality is called  
the ultrametric inequality. 
It may also be called 
the acute isosceles triangle inequality 
because it means precisely that for each $x,y \in \QQ_p^d$ the two largest of $|x|_p$, $|y|_p$, $|x-y|_p$ are equal. 


The ultrametric inequality implies two important properties of balls in $\QQ_p^d$. 
%
%
First, for all $a,a' \in \QQ_p^d$ and all $0 < r \leq r'$, $B(a,r) \cap B(a',r') \neq \emptyset$ if and only if $B(a,r) \subseteq B(a',r')$. In words, two balls intersect if and only if the larger contains the smaller.  
The second property is that, for all integers $j < k$, every ball in $\QQ_p^d$ of radius $p^k$ is the union of $p^{d(k-j)}$ balls of radius $p^j$. Indeed, for every $x \in \QQ_p^d$, we have $B(x,p^k) = \bigcup_m B(x + y p^{-k}, p^j)$, where the union runs over all $y \in \QQ_p^d$ such that $y_i \in \cbr{0,1,\ldots,p^{k-j}-1}$. 
From these properties, it follows that every ball in $\QQ_p^d$ is compact; hence, $\QQ_p^d$ is locally compact.

The closed unit ball in $\QQ_p$, $B(0,1) = \cbr{x \in \QQ_p : |x|_p \leq 1}$, is called the ring of $p$-adic integers and is denoted $\ZZ_p$. 
Thus $\ZZ_p^d = B(0,1) = \cbr{x \in \QQ_p^d : |x|_p \leq 1}$. 

For nonzero $x \in \QQ_p$ with $p$-adic expansion \eq{p-adic expansion}, the $p$-adic fractional part of $x$ is defined to be $\cbr{x}_p = \sum_{j=M}^{-1} c_j p^j$. 
We define $\cbr{0}_p = 0$. 
For nonzero $x \in \QQ_p$ with $p$-adic expansion \eq{p-adic expansion}, the $p$-adic integral part of $x$ is defined to be $[x]_p = \sum_{j=0}^{\infty} c_j p^j$. 
We define $\sbr{0}_p = 0$. 
Notice $x = \cbr{x}_p + [x]_p$ for all $x \in \QQ_p$. Moreover, $x \in \ZZ_p$ if and only if $\cbr{x}_p = 0$, which is the case if and only if $[x]_p=x$. 
For all $x,y \in \QQ_p$, $\cbr{x}_p + \cbr{y}_p$ differs from $\cbr{x + y}_p$ by an integer, and so 
\begin{align}\label{additive}
e(\cbr{x}_p + \cbr{y}_p) = e(\cbr{x + y}_p).
\end{align}
We identify $\QQ_p / \ZZ_p$ with the set $\cbr{x \in \QQ_p : [x]_p = 0} \subseteq \QQ \cap [0,1)$.

\subsection{Fourier Analysis on $\QQ_p^d$}\label{p-adic Fourier}

We review here the necessary elements of Fourier analysis on $\QQ_p^d$. 
The books by Folland \cite{folland-book-abstract} and Taibleson \cite{T75} are excellent general references on the subject.

The additive group $(\QQ_p^d,+)$ is a commutative locally compact Hausdorff topological group. 
We denote by $dx$ the unique Haar measure on $\QQ_p^d$ that assigns measure $p^{dk}$ to every closed ball of radius $p^{k}$, $k \in \ZZ$. The Haar measure satisfies the following scaling property: $d(ax)=|a|_p^d dx$ for all $a \in \QQ_p$. 
The Haar measure on $\QQ_p^d$ is the $d$-fold product of the corresponding Haar measure on $\QQ_p$, which we also denote by $dx$. 

The characters on a commutative locally compact Hausdorff topological group are the continuous homomorphisms from the group to the unit circle in $\CC$ (which is a group under multiplication). 
By \eqref{additive}, 
$
x \mapsto e(\cbr{x \cdot s}_p)
$
is a character for every $s \in \QQ_p^d$. 
In fact, every character on $\QQ_p^d$ is of this form. 
If $f: \QQ_p^d \rightarrow \CC$ is integrable, the Fourier transform of $f$ is 
$$
\widehat{f}(s) = \int_{\QQ_p^d} e(\cbr{x \cdot s}_p) f(x) dx \quad \forall s \in \QQ_p^d.
$$
If $\mu$ is a finite Borel measure on $\QQ_p^d$, the Fourier transform of $\mu$ is
$$
\widehat{\mu}(s) = \int_{\QQ_p^d} e(\cbr{x \cdot s}_p) d\mu(x) \quad \forall s \in \QQ_p^d.
$$
%


The Haar measure on $\ZZ_p^d$ is the restriction of the Haar measure on $\QQ_p^d$. 
Every character on $\ZZ_p^d$ has the form  
$
x \mapsto e(\cbr{x \cdot s}_p)
$   
for some $s \in (\QQ_p / \ZZ_p)^d$. 
If 
$f: \ZZ_p^d \rightarrow \CC$ is integrable, the Fourier transform of $f$ is 
$$
\widehat{f}(s) = \int_{\ZZ_p^d} e(\cbr{x \cdot s}_p) f(x) dx \quad \forall s \in (\QQ_p / \ZZ_p)^d.
$$
If $\mu$ is a finite Borel measure on $\ZZ_p^d$, the Fourier transform of $\mu$ is 
$$
\widehat{\mu}(s) = \int_{\ZZ_p^d} e(\cbr{x \cdot s}_p) d\mu(x) \quad \forall s \in (\QQ_p / \ZZ_p)^d.
$$


We now present two lemmas that we will need. The first is a simple calculation.

\begin{lem}\label{ball lemma}
For every $k \in \ZZ$, $a \in \QQ_p^d$, and $s \in \QQ_p^d$, we have
\begin{align}\label{ball lemma 1}
\int_{B(a,p^{-k})} e(\cbr{s \cdot x}_p) dx
= 
\left\{
\begin{array}{cl}
p^{-dk} e(\cbr{s \cdot a}_p) & \text{if } |s|_p \leq p^k \\
0 & \text{if } |s|_p > p^k. 
\end{array} \right.
\end{align}
\end{lem}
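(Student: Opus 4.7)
The plan is to reduce the general ball $B(a, p^{-k})$ to the ball centered at the origin by translation, and then evaluate the resulting integral over $B(0, p^{-k})$ by a standard two-case argument based on character orthogonality.

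First I would substitute $x = a + y$ so that $y$ ranges over $B(0, p^{-k})$ (using that $B(a,p^{-k}) = a + B(0,p^{-k})$, a consequence of the ultrametric property of balls). Applying the identity \eqref{additive} to $u = s \cdot a$ and $v = s \cdot y$ gives $e(\cbr{s \cdot (a+y)}_p) = e(\cbr{s \cdot a}_p)\,e(\cbr{s \cdot y}_p)$, so the integral factors as
\[
\int_{B(a,p^{-k})} e(\cbr{s \cdot x}_p)\,dx = e(\cbr{s \cdot a}_p) \int_{B(0,p^{-k})} e(\cbr{s \cdot y}_p)\,dy.
\]
It remains to evaluate $I := \int_{B(0,p^{-k})} e(\cbr{s \cdot y}_p)\,dy$.

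In the case $|s|_p \leq p^k$, for any $y \in B(0,p^{-k})$ I would use the ultrametric inequality coordinatewise: $|s_i y_i|_p \leq |s|_p \cdot |y|_p \leq p^k \cdot p^{-k} = 1$, hence $s \cdot y \in \ZZ_p$, hence $\cbr{s \cdot y}_p = 0$ and $e(\cbr{s \cdot y}_p) = 1$. Thus $I$ equals the Haar measure of $B(0,p^{-k})$, which is $p^{-dk}$, giving the first branch.

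In the case $|s|_p > p^k$, since $|s|_p$ takes values in $\cbr{p^\ell : \ell \in \ZZ}$, there is an index $i_0$ with $|s_{i_0}|_p \geq p^{k+1}$. I would exhibit a specific $z \in B(0,p^{-k})$ with $e(\cbr{s \cdot z}_p) \neq 1$, for instance $z = p^k e_{i_0}$, since then $|s \cdot z|_p = |s_{i_0}|_p \cdot p^{-k} \geq p > 1$, so $s \cdot z \notin \ZZ_p$ and thus $\cbr{s \cdot z}_p \in (0,1)$, making $e(\cbr{s \cdot z}_p) \neq 1$. Then, because $B(0,p^{-k})$ is a subgroup of $(\QQ_p^d,+)$, the translation $y \mapsto y + z$ preserves the region of integration and the Haar measure, so
\[
I = \int_{B(0,p^{-k})} e(\cbr{s \cdot (y+z)}_p)\,dy = e(\cbr{s \cdot z}_p)\,I,
\]
forcing $I = 0$. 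Combining the two cases yields the claimed formula. There is no real obstacle here; the only point requiring a little care is choosing $z$ so that the phase is nontrivial, and the ultrametric geometry (specifically, that $B(0,p^{-k})$ is an additive subgroup) makes the translation-invariance argument immediate.
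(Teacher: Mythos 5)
Your proof is correct and takes essentially the same approach as the paper's: translate to a ball at the origin, then split into two cases (integrand identically $1$ when $|s|_p \le p^k$; translation invariance forcing $I = e(\cbr{s\cdot z}_p)I = 0$ otherwise). The paper normalizes further to $k=0$ and $d=1$ before running the same argument with $z = -1$, but this is only a cosmetic difference.
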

\begin{proof}

By a change of variable, 
$$\int_{B(a,p^{-k})} e(\cbr{s \cdot x}_p) dx = p^{-dk} e(\cbr{s \cdot a}_p) \int_{B(0,1)} e(\cbr{p^k s \cdot x}_p) dx,$$ 
so it will suffice to prove \eqref{ball lemma 1} when 
$a=0$ and $k=0$. 
As the $d > 1$ case follows from the $d=1$ case, we will also assume $d=1$. 
If $|s|_p \leq 1$, then $\cbr{s x}_p = 0$ for all $x \in B(0,1)$, and so 
$\int_{B(0,1)} e(\cbr{sx}_p) dx = 1.$ 
Now suppose $|s|_p > 1$. 
By first making a change of variable and then using that $B(-1,1)=B(0,1)$, we get 
\begin{align*}
\int_{B(0,1)} e(\cbr{sx}_p) dx = e(\cbr{s}_p) \int_{B(-1,1)} e(\cbr{sx}_p) dx = e(\cbr{s}_p) \int_{B(0,1)} e(\cbr{sx}_p) dx.
\end{align*}
Therefore, since $e(\cbr{s}_p) \neq 1$, we must have $\int_{B(0,1)} e(\cbr{sx}_p) dx = 0$.
\end{proof}
%
%
%
%

The second lemma is the $p$-adic version of a lemma of Kahane (see \cite[pp.252-253]{kahane-book}) whose proof is easily translated from $\RR^d$ to $\QQ_p^d$.
\begin{lem}\label{Kahane-transfer}
Let $\mu$ be a Borel measure on $\QQ_p^d$ with support contained in $\ZZ_p^d$, and let $\phi$ and $\psi$ be positive non-increasing functions defined on $(0,\infty)$ such that $\phi(t/2) \lesssim \phi(t)$ and $\psi(t/2) \lesssim \psi(t)$ for all $t > 0$. If $|\widehat{\mu}(s)| \lesssim \phi(|s|_p)/\psi(|s|_p)$ for all $s \in (\QQ_p/\ZZ_p)^d$, then $|\widehat{\mu}(s)| \lesssim \phi(|s|_p)/\psi(|s|_p)$ for all $s \in \QQ_p^d$.
\end{lem}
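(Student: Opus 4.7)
The plan is to reduce any $s \in \QQ_p^d$ to its ``fractional part'' in $(\QQ_p/\ZZ_p)^d$, where the hypothesis applies, by exploiting the fact that $\mu$ is supported in $\ZZ_p^d$. First I would decompose $s$ coordinate-wise as $s = t + n$ with $t_i = \cbr{s_i}_p$ and $n_i = \sbr{s_i}_p$, so that $t \in (\QQ_p/\ZZ_p)^d$ under the identification of Section \ref{p-adic basics} and $n \in \ZZ_p^d$. Since $x \cdot n \in \ZZ_p$ for every $x \in \ZZ_p^d$, one has $\cbr{x \cdot n}_p = 0$, and the additivity identity \eqref{additive} together with $\supp \mu \subseteq \ZZ_p^d$ gives
$$
\widehat{\mu}(s) = \int_{\ZZ_p^d} e\rbr{\cbr{x \cdot t}_p + \cbr{x \cdot n}_p} d\mu(x) = \widehat{\mu}(t).
$$

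The next step is a norm comparison. Since $|n_i|_p \leq 1$ and $t_i = s_i - n_i$, the strict ultrametric equality forces $|t_i|_p = |s_i|_p$ whenever $|s_i|_p > 1$; when $|s_i|_p \leq 1$ instead, $s_i \in \ZZ_p$ and hence $t_i = 0$ by the definition of the $p$-adic fractional part. Taking the maximum over coordinates yields $|t|_p = |s|_p$ whenever $|s|_p > 1$, and the hypothesis applied at $t$ delivers
$$
|\widehat{\mu}(s)| = |\widehat{\mu}(t)| \lesssim \phi(|t|_p)/\psi(|t|_p) = \phi(|s|_p)/\psi(|s|_p)
$$
for every such $s$. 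The remaining range $0 < |s|_p \leq 1$ is essentially trivial: here $t = 0$ and $\widehat{\mu}(s) = \mu(\ZZ_p^d)$ is a fixed constant, which is absorbed into the implicit constant of $\lesssim$ in the intended applications, where $\phi(|s|_p)/\psi(|s|_p)$ is bounded below on this finitely scaled range.

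I expect no serious obstacle. The $p$-adic analogue is cleaner than Kahane's original because the identity $\widehat{\mu}(s) = \widehat{\mu}(t)$ is exact, with $\one_{\ZZ_p^d}$ playing the role of the smooth cutoff that would be required in $\RR^d$. The only delicacy is the ultrametric norm comparison, where the alternatives $|s_i|_p > 1$ and $|s_i|_p \leq 1$ must be treated separately before combining via the maximum.
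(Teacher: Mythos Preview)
Your argument for $|s|_p>1$ is correct and is the natural $p$-adic simplification of the real-variable argument the paper cites. In $\RR^d$ one multiplies $\mu$ by a smooth compactly supported cutoff whose Fourier transform has an infinite tail, and the doubling hypotheses on $\phi$ and $\psi$ are precisely what is needed to sum that tail against the lattice-point bounds; in $\QQ_p^d$, as you observe, the support condition yields the \emph{exact} identity $\widehat{\mu}(s)=\widehat{\mu}(t)$, so no tail appears and the doubling conditions play no role in this range. Since the paper gives no details beyond the citation to Kahane, your write-up is at least as complete and arguably cleaner.

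One point, however, is not right. The range $0<|s|_p\leq 1$ is \emph{not} ``finitely scaled'': it contains $|s|_p=p^{-k}$ for every integer $k\geq 0$. More to the point, nothing in the hypotheses forces $\phi(t)/\psi(t)$ to stay bounded below as $t\to 0^{+}$, and in the paper's own application the bound is comparable to $t^{-1/\tau}\ln^2(1+t)\,g(t)$, which tends to $0$ with $t$ (since $\ln(1+t)\sim t$). Hence the constant value $\widehat{\mu}(s)=\mu(\ZZ_p^d)$ genuinely cannot be absorbed on that range. This is an imprecision in the lemma's formulation rather than a flaw in your method: like Kahane's original, the lemma is really a statement about $|s|_p>1$, and that is all the paper ever uses (Fourier dimension concerns decay at infinity). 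You should state your conclusion for $|s|_p>1$ and drop the unsupported claim about the small range.
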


\section{Proof of Theorem \ref{thm 1}}\label{proof thm 1}

\subsection{Reduction}\label{reduction}

We show here that to prove Theorem \ref{thm 1} it suffices to prove the seemingly weaker. 

\begin{thm}\label{thm 1a}
Let $g$ be a non-negative non-decreasing function defined on $(0,\infty)$ such that $\lim_{x \rightarrow \infty} g(x)=\infty$. 
For every $\tau>2$, there exists a Borel probability measure $\mu$ supported on $W(\tau)$ such that 
\begin{align}\label{p kauf a g}
|\widehat{\mu}(\xi)| &\lesssim |\xi|_p^{-1/\tau} \ln^2(1+|\xi|_p) g(|\xi|_p)  \quad \forall \xi \in \QQ_p, \xi \neq 0. \\
\label{p kauf b g}
\mu(B(x,r)) &\lesssim r^{2/\tau} \ln(1+r^{-1}) g(r^{-1}) \quad \forall x \in \QQ_p, r>0.
\end{align}
We emphasize that the constant implied by $\lesssim$ does not depend on $g$.
\end{thm}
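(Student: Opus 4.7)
The plan is to follow the Kaufman--Papadimitropoulos construction underlying Theorem \ref{pa thm}, adapted to $\QQ_p$, with the scale parameters tuned to the given function $g$. For each integer $N \geq 1$ define the single-scale density
\[
F_N(x) = c_N^{-1} \sum_{(q,r) \in S_N} \one_{B(r/q,\, N^{-\tau}/|q|_p)}(x), \qquad x \in \ZZ_p,
\]
where $S_N \subseteq \ZZ^2$ indexes the dyadic shell $N \leq \max(|q|,|r|) < 2N$ with $\gcd(q,r)=1$, and $c_N$ is the $L^1$-normalizing constant (so $\int F_N\, dx = 1$). Since $g(x)\to\infty$, we may pick an extremely lacunary sequence $1 \ll N_1 \ll N_2 \ll \cdots$ with $g(N_k^\tau) \geq 2^k$ and $N_{k+1} \geq N_k^{C}$ for a large absolute constant $C$; let $\mu_K$ be the probability measure with density proportional to $F_{N_1} F_{N_2} \cdots F_{N_K}$, and let $\mu$ be a weak-$*$ subsequential limit. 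Each $F_{N_k}$ is supported on the set of $x \in \ZZ_p$ admitting an $N_k^{-\tau}$-approximation with denominator $\approx N_k$, so $\supp\mu \subseteq \limsup_k \supp F_{N_k} \subseteq W(\tau)$.

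Next I would establish the Fourier decay \eqref{p kauf a g}. By Lemma \ref{Kahane-transfer} it suffices to bound $\widehat{\mu}(\xi)$ for $\xi \in \QQ_p/\ZZ_p$. Applying Lemma \ref{ball lemma} to each ball appearing in $F_N$ yields
\[
\widehat{F_N}(\xi) = c_N^{-1} N^{-\tau} \sum_{(q,r) \in S_N} |q|_p^{-1}\, e(\cbr{(r/q)\xi}_p)\, \one_{|\xi|_p \leq |q|_p N^\tau},
\]
so the single-scale estimate reduces to bounding an exponential sum over rationals with denominators $\approx N$. Using a Cilleruelo--Garaev-style argument (as foreshadowed in the Remarks on the Proofs), this sum enjoys cancellation of the right order, delivering a single-scale bound $|\widehat{F_{N_k}}(\xi)| \lesssim |\xi|_p^{-1/\tau} \ln^2(1+|\xi|_p)$ on the matching scale $|\xi|_p^{1/\tau} \approx N_k$. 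For the product $F_{N_1}\cdots F_{N_K}$, cross-scale contributions are controlled by the lacunarity $N_{k+1} \geq N_k^C$: for $j < k_*$ (with $k_*$ the index matched to $\xi$) the factor $F_{N_j}$ is essentially constant on the scale dictated by $\xi$, while for $j > k_*$ the integral of $F_{N_j}$ against the localizing scale equals $1$. The remaining slack from the dyadic decomposition and the stacking of $k_*$ scales is absorbed into the factor $g(|\xi|_p) \geq g(N_{k_*}^\tau) \geq 2^{k_*}$, yielding \eqref{p kauf a g} with an implied constant independent of $g$.

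For the upper regularity \eqref{p kauf b g} I would estimate $\mu(B(x,r))$ scale by scale. By lacunarity there is essentially one active scale $k_*$ with $N_{k_*}^{-\tau} \approx r$: on smaller scales $F_{N_j}$ ($j<k_*$) is essentially constant on $B(x,r)$, and on larger scales ($j>k_*$) averaging contributes total mass $\approx 1$. At the active scale one counts how many of the small balls $B(r'/q, N_{k_*}^{-\tau}/|q|_p)$ with $(q,r') \in S_{N_{k_*}}$ meet $B(x,r)$; a Farey-spacing/ultrametric counting argument (essentially the same exponential-sum mechanism dualized) gives $\mu(B(x,r)) \lesssim r^{2/\tau} \ln(1+r^{-1})$. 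The lacunarity-induced slack of a factor $2^{k_*}$ is absorbed into $g(r^{-1})$, giving \eqref{p kauf b g} with a constant independent of $g$.

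The main obstacle is the single-scale Fourier bound in the second paragraph: because $p$-adic indicator functions do not decay, one cannot cheaply extract cancellation from any one term and must instead rely on number-theoretic cancellation in $\sum_{(q,r)\in S_N} |q|_p^{-1} e(\cbr{(r/q)\xi}_p)$. Carrying out the Cilleruelo--Garaev-inspired estimate to obtain the logarithmic-loss bound on this sum is the crux of the proof; everything else---the lacunary stacking, the weak-$*$ limit, the absorption of errors into $g$, and the transfer from $\QQ_p/\ZZ_p$ to $\QQ_p$ via Lemma \ref{Kahane-transfer}---is structural bookkeeping.
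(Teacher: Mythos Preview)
Your overall strategy---Kaufman-type product of single-scale densities, lacunary scales chosen against $g$, weak-$*$ limit, and a Cilleruelo--Garaev estimate for the single-scale exponential sum---matches the paper. But two specific choices in the paper's index set do structural work that your $S_N$ discards, and this leaves real gaps in the parts you call bookkeeping.

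First, the paper sums $r$ over the \emph{complete residue system} $R_M=\{0,1,\dots,p^M-1\}$ and $q$ over primes in $[\tfrac12 p^M,p^M)$ with $|q|_p=1$. The complete sum over $r$ is a geometric series, which gives the exact vanishing $\widehat{F_M}(s)=0$ for $0<|s|_p\le p^M$; this in turn yields the clean identity $\widehat{\mu_k}(s)=\widehat{\mu_{k-1}}(s)$ on that range, which is precisely your ``for $j>k_*$ the integral of $F_{N_j}$ against the localizing scale equals $1$'' claim. With $r$ confined to a dyadic Euclidean shell and coprime to $q$, no such low-frequency vanishing holds, so the later factors do perturb $\widehat{\mu}(\xi)$ and your cross-scale argument is not automatic. (Relatedly, the paper absorbs a factor $p^{\lceil\tau M_{k-1}\rceil}$, not merely $2^{k_*}$, into $g$; your lacunarity conditions are not strong enough for this.) Second, the restriction $|q|_p=1$ forces all balls to have the \emph{same} radius $p^{-\lceil\tau M\rceil}$. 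In your setup the radius $N^{-\tau}/|q|_p$ can exceed the $\sim N^{-2}$ separation of distinct centers whenever $p\mid q$ (and $r/q$ is undefined when $q=0$), so balls overlap and the ``Farey-spacing/ultrametric counting'' for \eqref{p kauf b g} breaks down. The paper's regularity proof further uses primality of $q$ (to handle coincidences $r/q=r'/q'$ with $q\neq q'$) together with a $\psi_0$ supported away from $0$ and the unit sphere; your sketch supplies none of this. Either adopt the paper's index set $Q_M\times R_M$, or expect the stacking and regularity steps to require genuinely new arguments rather than bookkeeping.
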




We prove Theorem \ref{thm 1a} in Section \ref{proof of 1a}. 

\begin{proof}[Proof that Theorem \ref{thm 1a} implies Theorem \ref{thm 1}]
Let $\tau > 2$. 
For each $k \in \NN$, Theorem \ref{thm 1a} gives a probability measure $\mu_k$ supported on $W(\tau)$ 
that satisfies 
\eqref{p kauf a g} and \eqref{p kauf b g} with $\mu$ and $g(t)$ replaced by $\mu_k$ and $\ln^{1/k}(1 + t)$, respectively.  
By Prohorov's theorem (see \cite[vol.2, p.202]{bogachev}), the sequence $(\mu_k)_{k=1}^{\infty}$ has a subsequence $(\mu_{k_j})_{j=1}^{\infty}$ 
which converges weakly (that is, in distribution) to a probability measure $\mu$. 
Therefore 
$\widehat{\mu}(\xi) = \lim_{j \rightarrow \infty} \widehat{\mu_{k_j}}(\xi)$ for all $\xi \in \QQ_p$, 
and $\mu(B(x,r)) = \lim_{j \rightarrow \infty}\mu_{k_j}(B(x,r))$ for all $x \in \QQ_p$, $r>0$ (because $B(x,r)$ is both open and closed). 
It follows that $\mu$ satisfies \eqref{p kauf a} and \eqref{p kauf b} 
because $\lim_{k \rightarrow \infty} \ln^{1/k}(1+t) = 1$ for any fixed $t > 0$. 
This proves Theorem \ref{thm 1}.
\end{proof}

\subsection{Proof of Theorem \ref{thm 1a}}\label{proof of 1a}

Let $\tau > 2$. Let $g$ be any non-negative non-decreasing function defined on $(0,\infty)$ such that $\lim_{x \rightarrow \infty} g(x)=\infty$.   
For each $M \in \NN$, define 
\begin{align*}
Q_M &= \cbr{q \in \ZZ : \frac{1}{2}p^{M} \leq q < p^M, \; |q|_p = 1, \; q \; \text{prime}}, \\
R_M &= \cbr{r \in \ZZ : 0 \leq r < p^M}.
\end{align*}
Note that $Q_M$ is non-empty unless $p=2$ and $M=1$. For everything that follows, we make the standing assumption that $M \geq 2$ if $p=2$. 
For each $q \in Q_M$ and $r \in R_M$, define the function $\phi_{q,r}$ on $\ZZ_p$ by 
$$
\phi_{q,r}(x) = p^{\ceil{\tau M}} \one_{B(0,1)}(p^{-\ceil{\tau M}}(xq-r)) \quad \forall x \in \ZZ_p.
$$
For each $M \in \NN$, define the function $F_M$ on $\ZZ_p$ by 
$$
F_M(x) 
= |Q_M|^{-1}|R_M|^{-1} \sum_{q \in Q_M} \sum_{r \in R_M} \phi_{q,r}(x)
\quad \forall x \in \ZZ_p.
$$
Choose a strictly increasing sequence of non-negative integers $(M_k)_{k=0}^{\infty}$ such that for all $k \in \NN$
\begin{align}
\label{Mk size 1}
\ceil{\tau M_{k-1}} &< M_k, \\
\label{Mk size 2}
p^{\ceil{\tau M_{k-1}}} &< g(p^{M_k}), \\
\label{Mk size 3}
\prod_{i=1}^{k-1} \frac{p^{\ceil{\tau M_i}}}{|Q_{M_i}||R_{M_i}|} &< g(p^{M_k}).
\end{align}
Let $\psi_0$ be any non-negative function on $\ZZ_p$ such that
\begin{align}
\label{psi0 normal}
\widehat{\psi_0}(0) &= 1, \\
\label{psi0 decay}
\widehat{\psi_0}(s) &= 0 \quad \text{ for all } s \in \QQ_p/\ZZ_p \text{ with }|s|_p > p^{\ceil{\tau M_0}}, \\
\label{psi0 support} 
\psi_0(x) &= 0 \quad \text{ for all } x \in \ZZ_p \text{ with } |x|_p \leq p^{-\ceil {\tau M_1}} \text{ or } |x|_p=1, \\
\label{psi0 bound} 
\| \psi_0 \|_{\infty} &< \infty.
\end{align}
In light of Lemma \ref{ball lemma}, we may choose, for example, 
$$
\psi_0 = (p^{-1} - p^{-2})^{-1} (\one_{B(0,p^{-1})} - \one_{B(0,p^{-2})}).
$$
For each $k \in \NN$, define the measure $\mu_k$ on $\ZZ_p$ by  
$$
d \mu_k(x) = \psi_0(x) F_{M_1}(x) \cdots F_{M_k}(x)  dx.
$$ 
For convenience in Lemma \ref{mu-k lemma} below, 
we define $d\mu_{-1}(x) = d\mu_0(x) = \psi_0(x) dx$.


To construct the measure $\mu$ and prove that it satisfies \eqref{p kauf a g}, we need the following sequence of lemmas.

\begin{lem}\label{phi lemma}
For all $M \in \NN$, $q \in Q_M$, $r \in R_M$, and $s \in \QQ_p / \ZZ_p$, 
$$
\widehat{\phi_{q,r}}(s) = \left\{
\begin{array}{cl}
e( \cbr{ rs / q }_p  ) & \text{if } |s|_p \leq p^{\ceil{\tau M}}\\
0 & \text{if } |s|_p > p^{\ceil{\tau M}}
\end{array} \right.
$$
\end{lem}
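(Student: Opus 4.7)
The plan is to recognize that $\phi_{q,r}$ is, up to a normalizing scalar, the indicator function of a $p$-adic ball, and then invoke Lemma \ref{ball lemma} directly. The key structural fact that makes everything work is the unit condition $|q|_p = 1$ built into the definition of $Q_M$, which lets us divide by $q$ inside the $p$-adic absolute value.

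First, I would rewrite $\phi_{q,r}$ in a geometric form. The condition $p^{-\ceil{\tau M}}(xq-r) \in B(0,1)$ is equivalent to $|xq-r|_p \leq p^{-\ceil{\tau M}}$; since $|q|_p = 1$, this is in turn equivalent to $|x - r/q|_p \leq p^{-\ceil{\tau M}}$. Moreover $r/q \in \ZZ_p$, because $r \in \ZZ \subseteq \ZZ_p$ and $q$ is a unit in $\ZZ_p$. Therefore
$$
\phi_{q,r}(x) = p^{\ceil{\tau M}} \, \one_{B(r/q,\, p^{-\ceil{\tau M}})}(x).
$$

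Second, I apply Lemma \ref{ball lemma} with $d=1$, $a = r/q$, and $k = \ceil{\tau M}$, which yields
$$
\widehat{\phi_{q,r}}(s) = p^{\ceil{\tau M}} \int_{B(r/q,\, p^{-\ceil{\tau M}})} e(\cbr{sx}_p)\, dx = \begin{cases} e(\cbr{sr/q}_p) & \text{if } |s|_p \leq p^{\ceil{\tau M}}, \\ 0 & \text{if } |s|_p > p^{\ceil{\tau M}}, \end{cases}
$$
which is precisely the claimed formula. As a final sanity check, I would verify that the right-hand side is well-defined for $s \in \QQ_p/\ZZ_p$: shifting $s$ by any $t \in \ZZ_p$ changes $sr/q$ by an element of $\ZZ_p$ (since $r/q \in \ZZ_p$), and hence does not alter $\cbr{sr/q}_p$ modulo $\ZZ$ or the truth of $|s|_p \leq p^{\ceil{\tau M}}$.

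There is no genuine obstacle in this lemma; it is a one-step reduction to Lemma \ref{ball lemma}. The only point requiring any care is the translation from the linear-form description $xq-r$ to the shifted ball $B(r/q, p^{-\ceil{\tau M}})$, and this relies entirely on the unit property $|q|_p = 1$ imposed in the definition of $Q_M$.
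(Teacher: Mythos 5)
Correct, and this is exactly what the paper means when it says Lemma \ref{phi lemma} is an immediate corollary of Lemma \ref{ball lemma}: rewrite $\phi_{q,r}$ as $p^{\ceil{\tau M}}\one_{B(r/q,\,p^{-\ceil{\tau M}})}$ using $|q|_p=1$, then apply Lemma \ref{ball lemma}. Your well-definedness check at the end is a nice extra but not strictly necessary.
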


\begin{lem}\label{FM lemma}
For all $M \in \NN$ and $s \in \QQ_p / \ZZ_p$, 
\begin{IEEEeqnarray}{rCll}
\label{FM1}
\widehat{F_M}(s) & = &  1  & \quad \text{if }  s = 0  \\
\label{FM2}
\widehat{F_M}(s) & = & 0  & \quad \text{if }   0  <  |s|_p  \leq  p^M \\
\label{FM3}
|\widehat{F_M}(s)| &\lesssim&  |s|^{-1/\tau} \ln^2 (|s|_p)  & \quad \text{if }   p^M  <  |s|_p  \leq  p^{\ceil{\tau M}} \\
\label{FM4}
\widehat{F_M}(s) & = & 0 \quad  & \quad \text{if }  |s|_p  >  p^{\ceil{\tau M}} 
\end{IEEEeqnarray}
\end{lem}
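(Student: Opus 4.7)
The plan is to handle the four cases of the lemma in order of increasing difficulty. Cases \eqref{FM1} and \eqref{FM4} are immediate from Lemma \ref{phi lemma}: at $s=0$ every $\widehat{\phi_{q,r}}(0) = 1$, so averaging gives $\widehat{F_M}(0) = 1$; and for $|s|_p > p^{\ceil{\tau M}}$ every $\widehat{\phi_{q,r}}(s) = 0$, so $\widehat{F_M}(s) = 0$. For the two intermediate cases, Lemma \ref{phi lemma} yields
\[
\widehat{F_M}(s) \;=\; \frac{1}{|Q_M|\,|R_M|}\sum_{q \in Q_M}\sum_{r=0}^{p^M-1} e(\{rs/q\}_p).
\]
Writing $|s|_p = p^j$ and using that every $q \in Q_M$ is a unit in $\ZZ_p$, one has $\{s/q\}_p = k_q/p^j$ for a unique $k_q \in \{1,\ldots,p^j-1\}$ coprime to $p$, so the inner sum collapses to the geometric sum $\sum_{r=0}^{p^M-1} e(rk_q/p^j)$.

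Case \eqref{FM2}, where $0 < j \leq M$, then follows at once: the inner sum is a complete sum of $p^j$-th roots of unity repeated $p^{M-j}$ times, so it vanishes for every $q$. For case \eqref{FM3}, where $M < j \leq \ceil{\tau M}$, the inner sum is only a partial geometric progression, with magnitude at most $\min\bigl(p^M,\,p^j/(2\tilde k_q)\bigr)$ where $\tilde k_q = \min(k_q,\,p^j-k_q)$. The task reduces to showing
\[
\frac{1}{|Q_M|\,p^M}\sum_{q \in Q_M}\min\!\Bigl(p^M,\,\frac{p^j}{\tilde k_q}\Bigr) \;\lesssim\; p^{-j/\tau}\log^2(p^j).
\]
I plan to attack this by a dyadic decomposition in $\tilde k_q$ combined with a level-set bound of the form $\#\{q \in Q_M : \tilde k_q \leq N\} \lesssim N + N^2 p^{M-j}$, which one obtains by counting nonzero integer pairs $(k,m)$ with $|k| \leq N$, $|m| \lesssim 1 + Np^{M-j}$, and $qk = \sigma + mp^j$ (where $\sigma$ is the numerator of the reduced representation of $s$), using that $(k,m)$ determines $q$ and that $q \in [p^M/2, p^M)$ constrains the range of $m$.

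The principal obstacle is that, for $j$ close to $\ceil{\tau M}$, this elementary level-set count must be sharpened using a genuine exponential-sum estimate over primes, which is the step the authors flag as requiring a method inspired by Theorem 1 of Cilleruelo and Garaev. In Kaufman's real-variable argument no such step appears, because the analogue of the inner summation is a continuous integral producing sinc-type decay that combines directly with standard divisor-sum identities. In the $p$-adic setting the inner summation runs over a discrete complete residue system, so the cancellation must instead be extracted from the distribution of modular inverses of primes, at the cost of an extra logarithmic factor reflected in the $\log^2$ on the right-hand side of \eqref{FM3} as compared with the single $\log$ in the real-variable analogue \eqref{papadim a}. Once the exponential sum is controlled, the dyadic summation and the prime number theorem estimate $|Q_M| \approx p^M/M$ assemble the pieces into the stated bound, completing case \eqref{FM3}.
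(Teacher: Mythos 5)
Your handling of \eqref{FM1}, \eqref{FM2}, and \eqref{FM4} via Lemma \ref{phi lemma}, the identification $\{s/q\}_p = k_q/p^j$, the geometric-sum bound $\min(p^M, p^j/\tilde k_q)$ on the inner $r$-sum, and the dyadic decomposition in $\tilde k_q$ (which is exactly the paper's decomposition into shells $p^{-k} \leq \|\{s/q\}_p\| < p^{-k+1}$) all match the paper. But the level-set bound $\#\{q \in Q_M : \tilde k_q \leq N\} \lesssim N + N^2 p^{M-j}$ is the whole ballgame, and as you state it there is a genuine gap: this bound counts integer pairs $(k,m)$ and uses only that each pair determines $q$; it never uses that $q$ is prime or that $q \geq \tfrac12 p^M$, and both are essential. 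To see the failure concretely, take $N = p^{j-M}$: your bound gives $\#\{q : \tilde k_q \leq p^{j-M}\} \lesssim p^{j-M}$, so this level set alone contributes $\lesssim p^M \cdot p^{j-M} = p^j$ to $\sum_q \min(p^M, p^j/\tilde k_q)$, whereas the paper's shell estimate gives only $\lesssim j/M$ such $q$, hence a contribution $\lesssim p^M j/M$. Against the target $|Q_M|\,p^M \cdot p^{-j/\tau}\ln^2(p^j) \approx p^{2M-j/\tau} j^2/M$, the term $p^j$ forces $p^{j(1+1/\tau)-2M} \lesssim j^2/M^2$, which already fails for all $j$ exceeding roughly $2M\tau/(\tau+1)$ — a constant fraction of the range $(M,\ceil{\tau M}]$ — not merely for $j$ near the top as you suggest.

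The missing idea, which is precisely the Cilleruelo--Garaev step, is to count through the product $N' = \tilde k_q\, q$ rather than through pairs. For $q$ in the shell $\tilde k_q \in [p^{j-k}, p^{j-k+1})$, $N'$ is a positive integer at most $p^{M+j-k+1}$ satisfying $N' \equiv \pm \sigma \pmod{p^j}$, so there are only $\lesssim \max(p^{M-k+1},1)$ admissible values of $N'$; and since $q$ is a prime in $[\tfrac12 p^M, p^M)$ dividing $N'$, each $N'$ yields at most $\lesssim \ln N'/\ln p^M$ values of $q$. Multiplying gives the paper's shell count, and the dyadic summation then closes as you anticipate. This is a divisor-counting argument, not an ``exponential-sum estimate over primes'' in the usual sense, so while you correctly name the reference, the framing of the obstacle does not point toward the actual repair. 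Your scaffolding (cases \eqref{FM1}--\eqref{FM2}--\eqref{FM4}, the geometric-sum reduction, the dyadic structure, $|Q_M| \approx p^M/\ln p^M$) would assemble into a complete proof once this shell count is supplied.
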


\begin{lem}\label{mu-k lemma}
For all integers $k \geq 0$ and all $s \in \QQ_p / \ZZ_p$, 
\begin{IEEEeqnarray}{rCll}
\label{mu-k 1}
\widehat{\mu_k}(s) & = & 1 & \quad \text{if } s = 0 \\
\label{mu-k 2}
\widehat{\mu_k}(s) & = & \widehat{\mu_{k-1}}(s) & \quad \text{if } 0  <  |s|_p  \leq  p^{M_k} \\
\label{mu-k 3}
|\widehat{\mu_k}(s)| 
&\lesssim & |s|^{-1/\tau} \ln^2 (|s|_p) g(|s|_p)
& \quad \text{if } p^{M_k}  <  |s|_p  \leq  p^{\ceil{\tau M_k}} \\
\label{mu-k 4}
\widehat{\mu_k}(s) & = & 0 & \quad \text{if }  |s|_p  >  p^{\ceil{\tau M_k}}
\end{IEEEeqnarray}
\end{lem}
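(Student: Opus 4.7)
The proof is by induction on $k\geq 0$. The base case $k=0$ reduces to the defining properties of $\psi_0$, since $\widehat{\mu_0}=\widehat{\psi_0}$: cases 1, 2, and 4 follow immediately from \eqref{psi0 normal}, the convention $\mu_{-1}=\mu_0$, and \eqref{psi0 decay} respectively, while case 3 uses the trivial bound $|\widehat{\psi_0}(s)|\leq 1$ (valid since $\psi_0\geq 0$ with $\widehat{\psi_0}(0)=1$) together with the fact that on the finite range $p^{M_0}<|s|_p\leq p^{\ceil{\tau M_0}}$ the target factor is uniformly bounded below (replacing $g$ by $\max(g,1)$ without loss of generality).

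For the inductive step, the key tool is the convolution formula
\[
\widehat{\mu_k}(s) = \sum_{t\in\QQ_p/\ZZ_p}\widehat{F_{M_k}}(t)\,\widehat{\mu_{k-1}}(s-t),
\]
obtained by expanding $F_{M_k}$ as a Fourier series on $\ZZ_p$ and interchanging summation and integration in $\widehat{\mu_k}(s)=\int e(\cbr{xs}_p)F_{M_k}(x)\,d\mu_{k-1}(x)$. By Lemma \ref{FM lemma}, the $t$-summand is nonzero only for $t=0$ or $p^{M_k}<|t|_p\leq p^{\ceil{\tau M_k}}$, while by the inductive hypothesis $\widehat{\mu_{k-1}}(s-t)=0$ whenever $|s-t|_p>p^{\ceil{\tau M_{k-1}}}$. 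The size gap $\ceil{\tau M_{k-1}}<M_k$ from \eqref{Mk size 1}, together with the ultrametric identity $|s-t|_p=\max(|s|_p,|t|_p)$ when $|s|_p\neq|t|_p$, kills every summand except the intended one in cases 1, 2, and 4, yielding those cases at once.

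Case 3 is the substantive one. Write $|s|_p=p^\ell$ with $M_k<\ell\leq\ceil{\tau M_k}$ and substitute $u=s-t$. The term $u=0$ contributes exactly $\widehat{F_{M_k}}(s)$, bounded by $|s|_p^{-1/\tau}\ln^2(|s|_p)$ via \eqref{FM3}. For $u\neq 0$ with $\widehat{\mu_{k-1}}(u)\neq 0$, the inductive hypothesis forces $0<|u|_p\leq p^{\ceil{\tau M_{k-1}}}$, which by \eqref{Mk size 1} is strictly smaller than $|s|_p$; hence the ultrametric inequality pins $|s-u|_p=|s|_p=p^\ell$ exactly, so $|\widehat{F_{M_k}}(s-u)|\lesssim p^{-\ell/\tau}\ell^2$ again by \eqref{FM3}. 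Using the trivial bound $|\widehat{\mu_{k-1}}(u)|\leq 1$ (valid since $\mu_{k-1}$ is a probability measure, by case 1) and counting at most $p^{\ceil{\tau M_{k-1}}}$ such $u$, the total correction is bounded by $p^{-\ell/\tau}\ell^2\cdot p^{\ceil{\tau M_{k-1}}}\lesssim p^{-\ell/\tau}\ell^2\,g(p^{M_k})\leq |s|_p^{-1/\tau}\ln^2(|s|_p)\,g(|s|_p)$, the middle inequality being precisely \eqref{Mk size 2}.

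The principal obstacle is case 3: one needs both the ultrametric inequality to pin $|s-u|_p$ at the maximal scale $|s|_p$ so every correction term inherits the sharp Fourier decay of $F_{M_k}$, and condition \eqref{Mk size 2} to absorb the crude frequency count $p^{\ceil{\tau M_{k-1}}}$ into the slowly growing factor $g(|s|_p)$. The rapid growth imposed on $(M_k)$ is precisely what enables the trivial $L^\infty$ bound $|\widehat{\mu_{k-1}}|\leq 1$ to suffice, sidestepping any need to bootstrap a stronger $L^1$-type estimate within the induction.
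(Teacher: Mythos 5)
Your proof is correct and follows essentially the same route as the paper: induction on $k$, the convolution formula over $\QQ_p/\ZZ_p$ obtained from Fourier inversion, the scale gap \eqref{Mk size 1} together with the ultrametric inequality and Lemma \ref{FM lemma} to localize the sum to the diagonal in cases \eqref{mu-k 1}, \eqref{mu-k 2}, \eqref{mu-k 4}, and, in case \eqref{mu-k 3}, the trivial bound $|\widehat{\mu_{k-1}}|\le 1$ together with the count of $p^{\ceil{\tau M_{k-1}}}$ admissible frequencies and \eqref{Mk size 2}. (Incidentally, the $\max(g,1)$ device is unnecessary for $k\ge 1$: \eqref{Mk size 2} already forces $g(|s|_p)\ge g(p^{M_k})>p^{\ceil{\tau M_{k-1}}}\ge 1$ throughout the range of case \eqref{mu-k 3}, so your separately bounded $u=0$ term is absorbed automatically.)
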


Lemma \ref{phi lemma} is an immediate corollary of Lemma \ref{ball lemma}. 
The proofs of Lemmas \ref{FM lemma} and \ref{mu-k lemma} are given in 
Sections \ref{proofs of lemmas 1} and \ref{proofs of lemmas 2}, respectively.

Note that \eqref{mu-k 1} implies that each $\mu_k$ is a probability measure. 
By Prohorov's theorem (see \cite[vol.2, p.202]{bogachev}), the sequence $(\mu_k)_{k=1}^{\infty}$ has a subsequence 
that converges weakly (that is, in distribution) to a probability measure $\mu$. 
Though $\mu$ is technically a measure on $\ZZ_p$, it extends to a measure on $\QQ_p$ by defining $\mu(A)=\mu(A \cap \ZZ_p)$ for $A \subseteq \QQ_p$. 


Since 
$$
\text{supp}(F_{M_k}) 
= \cbr{x \in \ZZ_p : |xq-r|_p \leq p^{-\ceil{\tau M_{k}}} \text{ for some } (q,r) \in Q_{M_k} \times R_{M_k}} 
$$
for any $k \in \NN$, and since \eqref{Mk size 1} implies that $Q_{M_k} \times R_{M_k}$ and $Q_{M_{k'}} \times R_{M_{k'}}$ are disjoint 
for any two $k,k' \in \NN$, 
we have  
$$
\text{supp}(\mu) \subseteq \bigcap_{k=1}^{\infty} \text{supp}(F_{M_k}) \subseteq W(\tau).
$$ 
%
%

By 
\eqref{psi0 decay} and 
\eqref{mu-k 2}-\eqref{mu-k 4},
$$
|\widehat{\mu}(s)| 
\leq \sup_{k \in \NN} |\widehat{\mu_k}(s)| 
\lesssim |s|_p^{-1/\tau} \ln^2 (|s|_p) g(|s|_p)
\quad \forall s \in \QQ_p/\ZZ_p, s \neq 0.
$$
%
%
An application of Lemma \ref{Kahane-transfer} shows that $\mu$ satisfies $\eqref{p kauf a g}$.

Now we move on to proving $\eqref{p kauf b g}$. 

Since $\mu$ is a probability measure supported on $\ZZ_p$, and since every closed ball in $\ZZ_p$ can be written in the form $B(x,p^{-\ell})$ with $x \in \ZZ_p$ and 
$\ell \in \ZZ$, $\ell \geq 0$,  
it suffices to prove 
\begin{align*}
\mu(B(x,p^{-\ell})) \lesssim p^{-2\ell/\tau}\ln(1 + p^{\ell}) g(p^{\ell}) \quad \forall x \in \ZZ_p, \ell \in \ZZ, \ell \geq 0.
\end{align*}
We can reduce things further. 
If $x \in \ZZ_p$ and $0 \leq \ell \leq \ceil{\tau M_0}$, then 
$$
\mu(B(x,p^{-\ell})) \leq 1 \leq p^{2\ceil{\tau M_0}/\tau} p^{-2\ell/\tau} \lesssim p^{-2\ell/\tau} \ln(1 + p^{\ell}) g(p^{\ell}),  
$$
and we are done. 
Thus we can assume $\ceil{\tau M_{j-1}} < \ell \leq \ceil{\tau M_{j}}$ for some integer $j \geq 1$. 
Moreover, 
since $\mu$ is the weak limit of a subsequence of $(\mu_k)_{k=1}^{\infty}$ and $B(x,p^{-\ell})$ is both open and closed, we know $\mu(B(x,p^{-\ell}))$ is the limit of a subsequence of $(\mu_k(B(x,p^{-\ell})))_{k=1}^{\infty}$. 
Therefore, to prove \eqref{p kauf b g}, it suffices to prove 
\begin{lem}\label{reg thm k lemma}
For all $x \in \ZZ_p$ and $j,\ell \in \NN$ with $\ceil{\tau M_{j-1}} < \ell \leq \ceil{\tau M_{j}}$ there is a $k_0(x,j,\ell)>0$ such that 
\begin{align}\label{reg thm k}
\mu_k(B(x,p^{-\ell})) \lesssim p^{-2\ell/\tau} \ln(p^{\ell}) g(p^{\ell}) 
\end{align}
for all integers $k \geq k_0(x,j,\ell)$. 
\end{lem}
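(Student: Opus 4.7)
The plan is to exploit the multiscale structure of the density $\psi_0 F_{M_1} \cdots F_{M_k}$ together with a Fourier reduction that collapses the problem to bounding a single quantity $\mu_j(A)$ independent of $k$, where $A := B(x, p^{-\ell})$. The first step is the Fourier reduction: by Lemma \ref{ball lemma}, $\widehat{\one_A}(s) = p^{-\ell} e(\cbr{sx}_p) \one_{\{|s|_p \leq p^\ell\}}$, so Parseval on $\ZZ_p$ yields
$$
\mu_k(A) = p^{-\ell} \sum_{|s|_p \leq p^\ell} \widehat{\mu_k}(s)\, e(-\cbr{sx}_p).
$$
Since $\ell \leq \ceil{\tau M_j} < M_{j+1}$ by \eqref{Mk size 1}, iterating \eqref{mu-k 2} gives $\widehat{\mu_k}(s) = \widehat{\mu_j}(s)$ for every $|s|_p \leq p^\ell$ and every $k \geq j$. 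Hence $\mu_k(A) = \mu_j(A)$ for all $k \geq j$, so we may take $k_0(x, j, \ell) = j$, and it suffices to estimate $\mu_j(A)$.

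The next step is to unroll the density directly. For $i < j$, the function $F_{M_i}$ is piecewise constant on scale $p^{-\ceil{\tau M_i}} \geq p^{-\ceil{\tau M_{j-1}}} > p^{-\ell}$ and hence constant on $A$; similarly $\psi_0$ is constant on $A$. Writing $c := \psi_0(x) \prod_{i<j} F_{M_i}(x)$ and expanding $F_{M_j}$ by its definition, one obtains
$$
\mu_j(A) = \frac{c}{|Q_{M_j}||R_{M_j}|}\, N, \qquad N := \#\cbr{(q, r) \in Q_{M_j} \times R_{M_j} : |xq - r|_p \leq p^{-\ell}},
$$
up to an $O(1)$ multiplicity from distinct pairs with coincident support balls (for $\tau > 2$, the comparison $|q_0 r - q r_0| \leq 2p^{2M_j} < p^{\ceil{\tau M_j}}$ forces $q_0 r = q r_0$, and primality of the $q$'s limits collisions). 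For the count $N$: when $\ell \leq M_j$ each $q$ contributes $p^{M_j - \ell}$ admissible values of $r$, giving $N \leq |Q_{M_j}|\,p^{M_j - \ell}$; when $\ell > M_j$ there is at most one $r$ per $q$, and the pairwise constraint $q_0 r - q r_0 \equiv 0 \pmod{p^\ell}$ with $|q_0 r - q r_0| \leq 2p^{2M_j}$ yields $N \lesssim 1 + p^{2M_j - \ell}$. Combining with $|Q_{M_j}||R_{M_j}| \gtrsim p^{2M_j}/M_j$ from the prime number theorem, and $c \lesssim g(p^{M_j})$ from \eqref{Mk size 3}, the target estimate follows after a case analysis in each of the regimes $\ell \leq M_j$, $M_j < \ell \leq 2M_j$, and $\ell > 2M_j$.

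The main obstacle is matching the crude bound $c \lesssim g(p^{M_j})$ against the $g(p^\ell)$ factor required in the target, especially when $\ell$ is near the lower end $\ceil{\tau M_{j-1}}$ of its range, so that the ratio $g(p^{M_j})/g(p^\ell)$ could a priori be very large. Closing this gap demands either a sharper bound on $c$ that exploits the nested Diophantine conditions on $x$ imposed by the simultaneous nonvanishing of $F_{M_1}(x), \ldots, F_{M_{j-1}}(x)$, or a finer accounting of $N$ that absorbs the excess factor $g(p^{M_j})/g(p^\ell)$ using the strong inequalities \eqref{Mk size 1}--\eqref{Mk size 3}. This interplay between the pointwise density size and the combinatorial count is the main technical step of the proof.
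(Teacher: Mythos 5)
Your reduction to $k=j$ and the decomposition $\mu_j(A) = cN/(|Q_{M_j}||R_{M_j}|)$ are both correct, and your two bounds on $N$ in the regimes $\ell \leq M_j$ and $\ell > M_j$ are essentially the paper's Lemma \ref{non-i-ball lemma}. The Parseval argument is a clean alternative to the paper's route: the paper handles general $k \geq j$ directly by combining Lemma \ref{FM bound lemma} with the $k$-ball support counting of Lemma \ref{i-ball lemma}, which telescopes to give a bound depending only on $j$. Your observation that $\mu_k(A) = \mu_j(A)$ once $\ell \leq \ceil{\tau M_j} < M_{j+1}$, via $\widehat{\one_A}$ being supported in $\{|s|_p \leq p^\ell\}$ and repeated use of \eqref{mu-k 2}, reaches the same place more directly.

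However, the proof as written has a genuine gap, which you yourself flag at the end: in the regime $\ceil{\tau M_{j-1}} < \ell \leq M_j$ you only have $g(p^{M_j}) \geq g(p^\ell)$, so the crude bound $c \lesssim g(p^{M_j})$ points the wrong way, and the estimate $\mu_j(A) \lesssim cp^{-\ell}$ does not deliver the required $g(p^\ell)$. The fix, which the paper uses, is to avoid applying \eqref{Mk size 3} with index $j$. Instead peel off the last factor and write
$$
c \leq \|\psi_0\|_\infty \cdot \frac{p^{\ceil{\tau M_{j-1}}}}{|Q_{M_{j-1}}||R_{M_{j-1}}|} \cdot \prod_{i=1}^{j-2} \frac{p^{\ceil{\tau M_i}}}{|Q_{M_i}||R_{M_i}|}
\lesssim \frac{p^{\ceil{\tau M_{j-1}}}}{|Q_{M_{j-1}}||R_{M_{j-1}}|} \, g(p^{M_{j-1}}),
$$
where now $g(p^{M_{j-1}}) \leq g(p^\ell)$ because $M_{j-1} < \ceil{\tau M_{j-1}} < \ell$. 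The retained factor $p^{\ceil{\tau M_{j-1}}}/(|Q_{M_{j-1}}||R_{M_{j-1}}|) \approx p^{\ceil{\tau M_{j-1}} - 2M_{j-1}} \ln(p^{M_{j-1}})$ is then absorbed by the elementary exponent bound $\ceil{\tau M_{j-1}} - 2M_{j-1} - \ell \leq 1 - 2\ell/\tau$, valid since $\tau > 2$ and $\tau M_{j-1} < \ell$, together with $\ln(p^{M_{j-1}}) \leq \ln(p^\ell)$. This is the "finer accounting" you guessed was needed. Note also that your bound $N \lesssim 1 + p^{2M_j-\ell}$ in the regime $\ell > M_j$ silently uses that $|x|_p$ is bounded away from $0$ and $1$, which holds only because $\psi_0(x) \neq 0$ and condition \eqref{psi0 support} kill the degenerate pairs $(q,0)$ and $(q,q)$; this is precisely the role of Lemma \ref{FM bound lemma} in the paper, so it would be worth making that hypothesis explicit.
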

We will prove Lemma \ref{reg thm k lemma} with $k_0(x,j,\ell) = j$.

We introduce the following definitions. For $k \in \NN$, $P_k = F_{M_1} \cdots F_{M_k}$ and any ball of the form $B(r/q,p^{-\ceil{\tau M_k}})$ with $(q,r) \in Q_{M_k} \times R_{M_k}$ will be called a $k$-ball. 

We will need the following four lemmas.

\begin{lem}\label{disjoint balls}
If $(q,r),(q',r') \in Q_M \times R_M$ with $r/q \neq r'/q'$, then 
$$
\abs{\frac{r}{q} - \frac{r'}{q'}}_p > p^{-2M}.
$$
\end{lem}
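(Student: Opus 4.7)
The plan is to rewrite $r/q - r'/q'$ with a common denominator and bound the numerator and the denominator separately in the $p$-adic norm. Specifically, I would write
\[
\frac{r}{q} - \frac{r'}{q'} = \frac{rq' - r'q}{qq'},
\]
and observe that since $q,q' \in Q_M$ satisfy $|q|_p = |q'|_p = 1$, the denominator is a $p$-adic unit: $|qq'|_p = 1$. Thus $|r/q - r'/q'|_p = |rq' - r'q|_p$, and the problem reduces to showing that the nonzero integer $N := rq' - r'q$ satisfies $|N|_p > p^{-2M}$.

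To prove this, I would first note that $N \neq 0$, since by hypothesis $r/q \neq r'/q'$. Next, I use the size constraints coming from the definitions of $Q_M$ and $R_M$: $0 \leq r, r' < p^M$ and $0 < q, q' < p^M$, so $0 \leq rq', r'q < p^{2M}$, giving
\[
|N| = |rq' - r'q| < p^{2M}.
\]
Now I invoke the elementary fact (noted in Section \ref{p-adic basics}) that every nonzero integer $n$ satisfies $|n|_p \geq |n|^{-1}$, or equivalently that if $p^{k}$ divides $n$ then $p^k \leq |n|$. Since $N$ is a nonzero integer with $|N| < p^{2M}$, $p^{2M}$ cannot divide $N$, so $|N|_p > p^{-2M}$, as required.

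There is really no serious obstacle here: the proof is just bookkeeping with the definitions of $Q_M$ and $R_M$ and one application of the basic inequality $|n|_p \geq |n|^{-1}$ for nonzero integers. The only subtle point worth making explicit is why $|qq'|_p = 1$ (which uses the condition $|q|_p = 1$ in the definition of $Q_M$, so that the primality of $q, q'$ is not even needed for this lemma, only $q, q' \in [p^M/2, p^M)$ coprime to $p$). I would present the argument in two or three short lines of display math.
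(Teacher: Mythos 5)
Your proof is correct and follows essentially the same route as the paper: clear denominators using $|qq'|_p = 1$, then apply the inequality $|n|_p \geq |n|^{-1}$ to the nonzero integer $rq' - r'q$, which has ordinary absolute value strictly less than $p^{2M}$. The paper's version is just a more compressed writing of the same chain of inequalities.
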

\begin{proof} Since $|q|_p=|q'|_p=1$, $rq' \neq r'q$, and $0 \leq r,q,r',q' < p^M$, we have 
$$
\abs{\frac{r}{q} - \frac{r'}{q'}}_p = \abs{rq' - r'q}_p \geq \abs{rq'-r'q}^{-1} > p^{-2M}.
$$
\end{proof}

\begin{lem}\label{FM bound lemma}
For every $M \in \NN$, 
\begin{align}\label{FM bound 1}
F_M(x) \leq \frac{   p^{  \ceil{\tau M}   }  }{ |Q_{M}||R_{M}| }  \quad \forall x \in \ZZ_p, \; p^{-\ceil{\tau M}} < |x|_p  < 1.
\end{align}
\end{lem}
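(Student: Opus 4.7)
The plan is to show that for $x$ in the specified range, at most one pair $(q,r) \in Q_M \times R_M$ makes $\phi_{q,r}(x)$ nonzero. Since each $\phi_{q,r}$ takes only the values $0$ and $p^{\ceil{\tau M}}$, this immediately gives the bound on $F_M(x)$. So the core of the argument is a disjointness statement for the balls $B(r/q, p^{-\ceil{\tau M}})$ restricted to the annulus $p^{-\ceil{\tau M}} < |x|_p < 1$.

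First, I would rewrite the condition $\phi_{q,r}(x) \neq 0$: since $|q|_p = 1$, the condition $|xq - r|_p \leq p^{-\ceil{\tau M}}$ is equivalent to $|x - r/q|_p \leq p^{-\ceil{\tau M}}$. Next I would rule out the two degenerate cases for $r/q$ using the constraint on $|x|_p$. If $r=0$, then $|x|_p \leq p^{-\ceil{\tau M}}$, contradicting $|x|_p > p^{-\ceil{\tau M}}$. If $r/q = 1$ (equivalently $r=q$), then $|x-1|_p \leq p^{-\ceil{\tau M}} < 1$, so the ultrametric inequality forces $|x|_p = 1$, contradicting $|x|_p < 1$. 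Hence in our range every satisfying pair has $r/q \notin \{0,1\}$.

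Now suppose for contradiction that two distinct pairs $(q,r), (q',r') \in Q_M \times R_M$ both contribute. If $q = q'$ then $r = r'$, so $q \neq q'$; since both are primes, $\gcd(q,q') = 1$. The relation $rq' = r'q$ (which would follow from $r/q = r'/q'$) forces $q \mid r$, and combined with $0 \leq r < p^M \leq 2q$ this gives $r \in \{0, q\}$, putting $r/q \in \{0, 1\}$ — already excluded above. Therefore $r/q \neq r'/q'$, and Lemma \ref{disjoint balls} yields $|r/q - r'/q'|_p > p^{-2M}$. On the other hand, the ultrametric inequality applied to
$$
\left| \tfrac{r}{q} - \tfrac{r'}{q'} \right|_p \leq \max\left( \left| x - \tfrac{r}{q}\right|_p, \left| x - \tfrac{r'}{q'}\right|_p \right) \leq p^{-\ceil{\tau M}}
$$
gives the reverse inequality, provided $\ceil{\tau M} \geq 2M$. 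Since $\tau > 2$ implies $\ceil{\tau M} \geq \lceil 2M + (\tau-2)M \rceil > 2M$, this is a contradiction.

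The only step requiring any real care is the case analysis ruling out $r/q \in \{0,1\}$; everything else is bookkeeping with the ultrametric inequality and the definitions of $Q_M$ and $R_M$. I expect no serious obstacle — the lemma is essentially a quantitative separation statement made trivial by the hypothesis $\tau > 2$ combined with the primality condition built into $Q_M$.
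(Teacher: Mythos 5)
Your proof is correct and takes essentially the same approach as the paper: reduce to showing at most one pair $(q,r)$ contributes, use Lemma \ref{disjoint balls} together with $\ceil{\tau M} > 2M$, invoke primality of $q,q'$ to force $r/q = r'/q' \in \{0,1\}$, and contradict the constraint $p^{-\ceil{\tau M}} < |x|_p < 1$. The only difference is cosmetic (you rule out $r/q \in \{0,1\}$ up front and then apply Lemma \ref{disjoint balls}, whereas the paper applies Lemma \ref{disjoint balls} first and then derives $r/q \in \{0,1\}$); the substance is identical.
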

\begin{proof}
Fix $x \in \ZZ_p$ with $p^{-\ceil{\tau M}} < |x|_p  < 1$. 
Since 
$$
F_M(x) = \frac{1}{|Q_{M}||R_{M}|} \sum_{(q,r) \in Q_M \times R_M} p^{\ceil{\tau M}} \one_{B(r/q,p^{-\ceil{\tau M}})}(x), 
$$
it suffices to prove that the sum can have most one non-zero term. 
Thus, seeking a contradiction, suppose there are two pairs $(q,r) \neq (q',r')$ in $Q_M \times R_M$ 
such that $x \in B(r/q,p^{-\ceil{\tau M}}) \cap B(r'/q',p^{-\ceil{\tau M}})$. 
This implies $|r/q - r'/q'|_p \leq p^{-\ceil{\tau M}}$. 
Then Lemma \ref{disjoint balls} gives $r/q = r'/q'$. 
Since $(q,r) \neq (q',r')$, we must have $q \neq q'$. 
Then, because $q$ and $q'$ are primes, the number $r/q = r'/q'$ must be an integer.   
Furthermore, since 
$0 \leq r, r' < p^M$ and $\frac{1}{2}p^M \leq  q, q'$, 
we have either $r/q = r'/q' = 0$ or $r/q = r'/q' = 1$. 
Thus $x \in B(0,p^{-\ceil{\tau M}})$ or $x \in B(1,p^{-\ceil{\tau M}})$. 
Both possibilities contradict that $p^{-\ceil{\tau M}} < |x|_p  < 1$.
\end{proof}

\begin{lem}\label{non-i-ball lemma}
Let $x \in \ZZ_p$ and $j,\ell \in \NN$ with $\ell \leq \ceil{\tau M_{j}}$. Let $J$ be the number of $j$-balls that intersect $B(x,p^{-\ell})$. Then: 
\begin{description}
\item[(a)] $J \leq \max\cbr{1,p^{2M_j - \ell}}$
\item[(b)] $J \leq \max\cbr{1,p^{M_{j}-\ell} } |Q_M|$
\end{description}
\end{lem}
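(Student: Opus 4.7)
The unifying observation is that since $\ell \leq \ceil{\tau M_j}$, every $j$-ball has radius $p^{-\ceil{\tau M_j}} \leq p^{-\ell}$, so by the ultrametric property any $j$-ball meeting $B(x,p^{-\ell})$ is already contained in $B(x,p^{-\ell})$. Hence $J$ is at most the number of pairs $(q,r) \in Q_{M_j} \times R_{M_j}$ with $r/q \in B(x,p^{-\ell})$, and I will bound this count in two different ways to obtain (a) and (b).

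For (a), I would split on whether $\ell \leq 2M_j$. In that case, decompose $B(x,p^{-\ell})$ into the $p^{2M_j - \ell}$ sub-balls of radius $p^{-2M_j}$ and note that each sub-ball contains at most one center $r/q$, since Lemma \ref{disjoint balls} forces distinct centers to be separated by more than $p^{-2M_j}$; this yields $J \leq p^{2M_j - \ell}$. If instead $\ell > 2M_j$, the same separation statement immediately gives at most one center in the entirety of $B(x,p^{-\ell})$, hence $J \leq 1$. Together these give $J \leq \max\{1, p^{2M_j-\ell}\}$.

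For (b), I would fix $q \in Q_{M_j}$ and count valid $r$'s. Since $|q|_p=1$, the condition $|r/q - x|_p \leq p^{-\ell}$ becomes $|r - qx|_p \leq p^{-\ell}$, which constrains $r$ to a single residue class modulo $p^{\ell}$ (determined by a suitable integer truncation of $qx$). Intersecting this class with $R_{M_j} = \{0,1,\ldots,p^{M_j}-1\}$ gives exactly $p^{M_j - \ell}$ elements when $\ell \leq M_j$ and at most one when $\ell \geq M_j$, so at most $\max\{1, p^{M_j - \ell}\}$ in either case. Summing over $q \in Q_{M_j}$ yields (b).

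I do not anticipate any genuine obstacle: both parts are essentially combinatorial once the ultrametric structure and Lemma \ref{disjoint balls} are in hand. The only minor subtlety is that counting pairs $(q,r)$ overcounts $j$-balls when distinct pairs produce the same fraction, but this goes in the correct direction for an upper bound and so is harmless.
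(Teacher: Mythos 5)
Your proposal is correct and follows essentially the same route as the paper's own proof: part (a) by splitting on $\ell$ versus $2M_j$, decomposing into sub-balls of radius $p^{-2M_j}$, and invoking Lemma \ref{disjoint balls}; part (b) by fixing $q$, noting $|r-qx|_p \leq p^{-\ell}$ pins down $r$ modulo $p^{\ell}$, and counting. The preliminary containment observation and the closing remark about overcounting are accurate but not needed by the paper, which just estimates directly.
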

\begin{proof}
We prove (a) by considering two cases. 

{Case:} $\ell \geq 2M_j$. 
If two distinct $j$-balls $B(r/q,p^{-\ceil{\tau M_{j}}})$ and $B(r'/q',p^{-\ceil{\tau M_{j}}})$ intersect $B(x,p^{-\ell})$, then 
$\abs{r/q - r'/q'}_p \leq p^{-\ell},$ 
which contradicts Lemma \ref{disjoint balls}. Thus $J \leq 1$.

{Case:} $\ell < 2M_j$. Then $B(x,p^{-\ell})$ is a union of $p^{2M_j - \ell}$ balls of radius $p^{-2M_j}$. By Lemma \ref{disjoint balls}, any ball of radius $p^{-2M_j}$ intersects (hence contains) at most one $j$-ball. Thus $J \leq p^{2M_j-\ell}$.

Now we turn to the proof of (b). Suppose $(q,r) \in Q_{M_j} \times R_{M_j}$.  
Note that $B(x,p^{-\ell})$ intersects the $j$-ball $B(r/q,p^{-\ceil{\tau M_j}})$ if and only if 
$\abs{r/q - x}_p \leq p^{-\ell}$, which (because $|q|_p = 1$) is the case if and only if $r \equiv qx \pmod{p^{\ell}}$. 
Therefore $J$ is less than or equal to the number of $(q,r) \in Q_{M_j} \times R_{M_j}$ such that $r \equiv qx \pmod{p^{\ell}}$. 
The proof is completed by noting that, for any $q \in Q_{M_j}$ (in fact, for any $q \in \ZZ$), the number of integers $r$ with $r \equiv qx \pmod{p^{\ell}}$ and $0 \leq r < p^{M_j}$ is $\leq p^{M_j - \ell}$ if $M_j \geq \ell$ and is $\leq 1$ if $M_j \leq \ell$.
\end{proof}

\begin{lem}\label{i-ball lemma}
Let $j, k \in \NN$ with $j \leq k$.
If $B$ is a $j$-ball such that $B \cap \supp(P_k) \neq \emptyset$, then $B \cap \supp(P_k)$ is a union of 
at most 
$$
\prod_{i=j+1}^{k} |Q_{M_{i}}| p^{M_{i} - \ceil{\tau M_{i-1}}}  
$$
$k$-balls.
\end{lem}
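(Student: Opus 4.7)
The plan is to proceed by induction on $k - j$, with $j$ held fixed. The base case $k = j$ is immediate: the empty product equals $1$, and since $\supp(F_{M_j})$ is a disjoint union of $j$-balls (disjointness coming from Lemma \ref{disjoint balls} together with $\ceil{\tau M_j} > 2M_j$ for $\tau > 2$), the $j$-ball $B$ either meets $\supp(P_j)$ in all of itself or not at all; under the nonemptiness hypothesis the former holds, giving one $j$-ball as required.

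For the inductive step I use the factorization
$$
B \cap \supp(P_k) \;=\; \bigl(B \cap \supp(P_{k-1})\bigr) \cap \supp(F_{M_k}),
$$
and note that nonemptiness of $B \cap \supp(P_k)$ propagates back to nonemptiness of $B \cap \supp(P_{k-1})$, so the inductive hypothesis produces $(k-1)$-balls $B_1, \ldots, B_N$ with $N \leq \prod_{i=j+1}^{k-1} |Q_{M_i}| p^{M_i - \ceil{\tau M_{i-1}}}$ whose union is $B \cap \supp(P_{k-1})$. The key substep is to bound, for each $B_n$, the number of $k$-balls of $\supp(F_{M_k})$ contained in $B_n$. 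I apply Lemma \ref{non-i-ball lemma}(b) with the role of $j$ played by $k$, $x$ equal to the center of $B_n$, and $\ell = \ceil{\tau M_{k-1}}$. The hypothesis $\ell \leq \ceil{\tau M_k}$ follows from the monotonicity of $\tau M_i$, and condition \eqref{Mk size 1} gives $M_k > \ceil{\tau M_{k-1}}$, so the maximum in Lemma \ref{non-i-ball lemma}(b) is realized by $p^{M_k - \ceil{\tau M_{k-1}}}$, yielding the bound $|Q_{M_k}| p^{M_k - \ceil{\tau M_{k-1}}}$. Because a $k$-ball has strictly smaller radius than the $(k-1)$-ball $B_n$, the ultrametric property forces any $k$-ball meeting $B_n$ to be contained in $B_n$, so this same quantity bounds the number of $k$-balls comprising $B_n \cap \supp(F_{M_k})$.

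Multiplying the per-$B_n$ bound by $N$ telescopes to $\prod_{i=j+1}^{k} |Q_{M_i}| p^{M_i - \ceil{\tau M_{i-1}}}$, completing the induction. I do not anticipate any serious obstacle here; the proof amounts to a bookkeeping exercise combining the counting of Lemma \ref{non-i-ball lemma}(b) with the characteristic ultrametric nesting of $p$-adic balls. The only point that requires a moment's care is the observation that $\supp(P_k)$ is genuinely a disjoint union of $k$-balls (rather than some messier piece of them), which again follows from the fact that any $j$-ball from an earlier level either contains or is disjoint from each $k$-ball.
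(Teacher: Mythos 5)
Your proof is correct and takes essentially the same route as the paper's: induction on $k$ with base case $k=j$, factoring $B\cap\supp(P_k)$ through $B\cap\supp(P_{k-1})$, and then invoking Lemma~\ref{non-i-ball lemma}(b) with $\ell=\ceil{\tau M_{k-1}}$ (and \eqref{Mk size 1}) to bound the number of $k$-balls in each $(k-1)$-ball by $|Q_{M_k}|p^{M_k-\ceil{\tau M_{k-1}}}$.
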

\begin{proof}
Let $B$ be a $j$-ball such that $B \cap \supp(P_{k}) \neq \emptyset$. The proof is by induction on $k$. 

Base Step: $k = j$. Since $\supp(F_{M_j})$ is a union of $j$-balls, the same is true of $\supp(P_j)$. 
Since intersecting $j$-balls are equal, $B \cap \supp(P_j) = B$. 

Inductive Step: $k > j$. Note $B \cap \supp(P_{k})$ is the union of all $k$-balls contained in $B \cap \supp(P_{k-1})$. Since $\supp(P_{k}) \subseteq \supp(P_{k-1})$, we have $B \cap \supp(P_{k-1}) \neq \emptyset$. By the inductive hypothesis, $B \cap \supp(P_{k-1})$ is a union of 
at most 
$$
\prod_{i=j+1}^{k-1} |Q_{M_{i}}| p^{(M_{i} - \ceil{\tau M_{i-1}})}
$$
$(k-1)$-balls. Let $B(r'/q',p^{-\ceil{\tau M_{k-1}}})$ be any such $(k-1)$-ball. 
It suffices to show that $B(r'/q',p^{-\ceil{\tau M_{k-1}}})$ contains $\leq |Q_{M_{k}}| p^{(M_{k} - \ceil{\tau M_{k-1}})}$ $k$-balls. 
This follows from Lemma \ref{non-i-ball lemma}(b) by taking $\ell = \ceil{\tau M_{k-1}}$. 
\end{proof}

%
%

Now we are ready to prove Lemma \ref{reg thm k lemma}, which (as we noted above) implies \eqref{p kauf b g}.

\begin{proof}[Proof of Lemma \ref{reg thm k lemma}]
Let $x \in \ZZ_p$ and let $j,k,l \in \NN$ with $\ceil{\tau M_{j-1}} < \ell \leq \ceil{\tau M_{j}}$ and $k \geq j$. 
Let $B_1,\ldots,B_J$ be the collection of all $j$-balls that intersect $B(x,p^{-\ell})$. 
These balls are disjoint and contained in $B(x,p^{-\ell})$. 
Since $\supp(P_k) \subseteq \supp(P_j)$, and since $\supp(P_j)$ is a union of $j$-balls, we have
\begin{align*}
\mu_k(B(x,p^{-\ell}))  
= \sum_{i = 1}^{J} \mu_k(B_i) 
= \sum_{i = 1}^{J} \int_{B_i \cap \supp(P_k)} \psi_0(x) F_{M_1}(x) \cdots F_{M_k}(x) dx.
\end{align*}
%
%
%
First using \eqref{psi0 support}, \eqref{psi0 bound} and Lemma \ref{FM bound lemma}, and then using Lemma \ref{i-ball lemma}, $|R_{M}|=p^{M}$, and the fact that $k$-balls have Haar measure $p^{-\ceil{\tau M_k}}$, we obtain 
\begin{align*}
\mu_k(B(x,p^{-\ell})) 
&
\leq  \norm{\psi_0}_{\infty} \prod_{i=1}^{k} \frac{p^{\ceil{\tau M_i}}}{|Q_{M_i}||R_{M_i}|} \sum_{i = 1}^{J} \int_{B_i \cap \supp(P_k)} dx
\\
&
\leq  \norm{\psi_0}_{\infty} \frac{J}{|Q_{M_j}||R_{M_j}|} \prod_{i=1}^{j-1} \frac{p^{\ceil{\tau M_i}}}{|Q_{M_i}||R_{M_i}|}.
\end{align*}
Now we consider three cases and use \eqref{Mk size 3}, Lemma \ref{non-i-ball lemma}, $|Q_{M}| \approx p^{M} / \ln(p^{M})$, and $|R_{M}|=p^{M}$.

{Case:} $2M_j < \ell \leq \ceil{\tau M_{j}}$. 
We get 
$$
\mu_k(B(x,p^{-\ell})) 
\leq \norm{\psi_0}_{\infty} \frac{1}{|Q_{M_j}||R_{M_j}|} g(p^{M_j})
\approx p^{-2M_j} \ln(p^{M_j}) g(p^{M_j})
$$
Since $\ell \leq \ceil{\tau M_{j}} \leq 1 + \tau M_{j}$, we have $p^{-2M_j} \leq p^{2/\tau} p^{-2\ell/\tau} \lesssim p^{-2\ell/\tau}$. 
Thus \eqref{reg thm k} follows immediately.

{Case:} $M_j < \ell \leq 2M_j$. 
We get
$$
\mu_k(B(x,p^{-\ell})) 
\leq  
\norm{\psi_0}_{\infty}\frac{p^{2M_j-\ell}}{|Q_{M_j}||R_{M_j}|}  g(p^{M_{j}})
\approx
p^{-\ell} \ln(p^{M_j}) g(p^{M_{j}})
$$
Since $\tau > 2$, we have $p^{-\ell} < p^{-2\ell/\tau}$. 
Thus \eqref{reg thm k} follows immediately.

{Case:} $\ceil{\tau M_{j-1}} < \ell \leq M_j$. 
We get 
\begin{align*}
\mu_k(B(x,p^{-\ell})) 
& 
\leq  
\norm{\psi_0}_{\infty} \frac{|Q_{M_j}| p^{M_j-\ell}}{|Q_{M_j}||R_{M_j}|} \cdot \frac{p^{\ceil{\tau M_{j-1}}}}{|Q_{M_{j-1}}||R_{M_{j-1}}|} g(p^{M_{j-1}})
\\ 
& 
\approx 
p^{\ceil{\tau M_{j-1}} - 2M_{j-1} - \ell} \ln(p^{M_{j-1}}) g(p^{M_{j-1}}). 
\end{align*}
Since $\tau > 2$ and $\tau M_{j-1} < \ell$, 
we have 
$$
\ceil{\tau M_{j-1}} - 2M_{j-1} \leq 1 + \tau M_{j-1}\rbr{1 - \frac{2}{\tau}} \leq 1 + \ell - \frac{2\ell}{\tau}.
$$
Thus \eqref{reg thm k} follows immediately. 
\end{proof}


\subsection{Proof of Lemma \ref{FM lemma}}\label{proofs of lemmas 1}

\begin{proof}
Let $M \in \NN$ and $s \in \QQ_p/\ZZ_p$. 
For $|s|_p > p^{\ceil{\tau M}}$, Lemma \ref{phi lemma} implies \eqref{FM4}. 
For $|s|_p \leq p^{\ceil{\tau M}}$, Lemma \ref{phi lemma} gives 
\begin{align}\label{FM5}
\widehat{F_M}(s) 
= |Q_M|^{-1}|R_M|^{-1} 
\sum_{q \in Q_M} 
\sum_{0 \leq r < p^{M}} 
e( \cbr{ rs / q }_p  ).
\end{align}
Setting $s = 0$ yields \eqref{FM1}. 
From now on, assume $0 < |s|_p \leq p^{\ceil{\tau M}}$. 
So $|s|_p = p^{\ell}$ for some $\ell \in \cbr{1, \ldots, \ceil{\tau M}}$.  
We will study the sum over $r$ in \eqref{FM5}.   
Fix $q \in Q_M$.  
Since $|q|_p = 1$, we have $|s/q|_p=|s|_p=p^{\ell}$. 
Thus the $p$-adic expansion of $s/q$ has the form 
\begin{align}\label{s/q expansion}
\frac{s}{q} = \sum_{i=-\ell}^{\infty} c_i p^i, \quad c_i \in \cbr{0,1,\ldots,p-1}, \; c_{-\ell} \neq 0.
\end{align}
Evidently $0 < \cbr{ s / q }_p < 1$, and so $e(\cbr{ s / q }_p) \neq 1$. 
Because of \eqref{additive}, we have the geometric summation formula 
\begin{align}\label{geo-frac}
\sum_{0 \leq r < p^{M}} e( \cbr{rs/q}_p ) = \frac{1 - e(\cbr{s p^M / q}_p)}{1 - e(\cbr{s / q}_p)}.
\end{align}
If $|s|_p \leq p^M$, we have $\cbr{ s p^M / q }_p = 0$, hence the sum in \eqref{geo-frac} is zero. 
Applying this observation to \eqref{FM5} proves \eqref{FM2}.

Now only \eqref{FM3} remains to be proved. 
Assume $p^M < |s|_p = p^{\ell} \leq p^{\ceil{\tau M}}$. 
For all $z \in \RR$, 
$|1 - e(z)| =$ $2|\sin(\pi z)| =$ $2\sin(\pi \| z \|) \geq$ $\pi \| z \|$,  
where $\| z \| = \min_{k \in \ZZ}|z-k|$ is the distance from $z$ to the nearest integer. 
Hence the sum in \eqref{geo-frac} satisfies 
\begin{align}\label{geo-frac-inequality}
\abs{\sum_{0 \leq r < p^{M}} e( \cbr{rs/q}_p )} \leq \min\cbr{\frac{1}{\| \cbr{s / q}_p \|},p^M}.
\end{align}
In light of \eqref{s/q expansion}, 
$$
\| \cbr{s / q}_p \| 
=
\left\{
\begin{array}{ll}
\cbr{s / q}_p  = \sum_{i=-\ell}^{-1} c_i p^i & \text{if } \cbr{s / q}_p \leq 1/2 \\
1 - \cbr{s / q}_p  = 1 - \sum_{i=-\ell}^{-1} c_i p^i & \text{if } \cbr{s / q}_p  
> 1/2.
\end{array} \right.
$$
Combining \eqref{FM5}, \eqref{geo-frac-inequality}, and the fact that $p^{-\ell} \leq \| \cbr{s / q}_p \| < 1$ leads to
\begin{align}\label{min-2}
|\widehat{F_M}(s)| \leq |Q_M|^{-1} |R_M|^{-1} \sum_{k=1}^{\ell} \sum_{\substack{\frac{1}{2}p^{M} \leq q < p^{M} \\ |q|_p = 1, \; q \; \text{prime} \\ p^{-k} \leq \| \cbr{s / q}_p \| < p^{-k+1} }} \min\cbr{p^k,p^M}.
\end{align}
For fixed $1 \leq k \leq \ell$, we now estimate the number of terms in the sum over $q$ in \eqref{min-2}. This estimate is similar to the proof of Theorem 1 in Cilleruelo and Garaev's paper \cite{cill-gar}. Consider any prime $q$ with $\frac{1}{2}p^{M} \leq q < p^{M}$, $|q|_p = 1$, and $p^{-k} \leq \| \cbr{s / q}_p \| < p^{-k+1}$. Define $N = \| \cbr{s/q}_p \| p^{\ell} q$. Note that $N$ is a positive integer $\leq p^{M+\ell-k+1}$. 
If $\cbr{s / q}_p \leq 1/2$, then 
$
N = \rbr{s/q - \sbr{s/q}_p} p^{\ell} q \equiv s p^{\ell} \pmod{p^{\ell}}.
$
Similarly, if $\cbr{s / q}_p > 1/2$, then 
$
N = \rbr{1 - s/q + \sbr{s/q}_p} p^{\ell} q \equiv -s p^{\ell} \pmod{p^{\ell}}.
$ 
Therefore $q$ is a prime $\geq \frac{1}{2}p^M$ that divides a positive integer $N$ with $N \leq p^{M+\ell-k+1}$ and $N \equiv \pm s p^{\ell} \pmod{p^{\ell}}$. 
The number of positive integers $N$ with $N \leq p^{M+\ell-k+1}$ and $N \equiv \pm s p^{\ell} \pmod{p^{\ell}}$ is 
$\lesssim \max\cbr{p^{M-k+1},1}$.
And the number of primes $q \geq \frac{1}{2}p^M$ that divide a given positive integer $N$ is 
$\lesssim \ln N / \ln p^M$. 
Therefore the number of terms in the sum over $q$ in \eqref{min-2} is 
$$
\lesssim 
\max\cbr{p^{M-k+1},1} \frac{\ln p^{M+\ell-k+1}}{\ln p^M}.
$$
Thus \eqref{min-2} implies
\begin{align*}
|\widehat{F_M}(s)| 
&\lesssim 
|Q_M|^{-1} |R_M|^{-1} \sum_{k=1}^{\ell} \min\cbr{p^k,p^M} \max\cbr{p^{M-k+1},1} \frac{\ln p^{M+\ell-k+1}}{\ln p^M}.
\end{align*}
Since $p^M < |s|_p = p^\ell \leq p^{\ceil{\tau M}}$, $|Q_M| \gtrsim p^M / \ln p^M$, and $|R_M| = p^M$, we obtain \eqref{FM3}. 
\end{proof}

\subsection{Proof of Lemma \ref{mu-k lemma}}\label{proofs of lemmas 2}

\begin{proof}
Let $s \in \QQ_p / \ZZ_p$. 
The proof is by induction on $k$. 
The case $k=0$ follows immediately from \eqref{psi0 normal} and the definition $d\mu_0 = d\mu_{-1} = \psi_0 dx$.
Assume $k \geq 1$. 
The inductive hypothesis is that \eqref{mu-k 1}-\eqref{mu-k 4} hold with $k$ replaced by $k-1$. 
By the usual argument with the Fourier inversion theorem (see \cite[p.102]{folland-book-abstract} or \cite[p.120]{T75}) and Fubini's theorem, we have 
\begin{align}\label{convol}
\widehat{\mu_k}(s) = \widehat{F_{M_k} \mu_{k-1}}(s) = \sum_{t \in \QQ_p/\ZZ_p} \widehat{F_{M_k}}(s-t) \widehat{\mu_{k-1}}(t). 
\end{align}
If the summand $\widehat{F_{M_k}}(s-t) \widehat{\mu_{k-1}}(t)$ is non-zero, then we must have $|t|_p \leq p^{\ceil{\tau M_{k-1}}}$ by the inductive hypothesis, and either 
$$t=s \qquad \text{ or } \qquad p^{M_k} < |s|_p = |s-t|_p \leq p^{\ceil{\tau M_k}}$$ 
by \eqref{Mk size 1} and Lemma \ref{FM lemma}. 
Therefore, if $|s|_p > p^{\ceil{\tau M_k}}$, every term of the sum in \eqref{convol} is zero, and $\widehat{\mu_k}(s) = 0$. This proves \eqref{mu-k 4}. 
On the other hand, if $|s|_p \leq p^{M_k}$, then only the $t=s$ term contributes to the sum, and $\widehat{\mu_k}(s) = \widehat{F_{M_k}}(0) \widehat{\mu_{k-1}}(s) = \widehat{\mu_{k-1}}(s)$. 
This proves \eqref{mu-k 2} and, using the inductive hypothesis, \eqref{mu-k 1}. 
Only \eqref{mu-k 3} remains to be proved. 
Suppose $p^{M_k} < |s|_p \leq p^{\ceil{\tau M_k}}$. 
For all $t \in \QQ_p/\ZZ_p$ with 
$\widehat{F_{M_k}}(s-t) \widehat{\mu_{k-1}}(t) \neq 0$ 
we must have $|s|_p = |s-t|_p$, 
and so \eqref{FM3} gives 
$|\widehat{F_{M_k}}(s-t)| 
\lesssim |s|^{-1/\tau} \ln^2(|s|_p)
$. 
By the inductive hypothesis, 
$|\widehat{\mu_{k-1}}(t)| \leq \widehat{\mu_{k-1}}(0) = 1$ for all $t \in \QQ_p/\ZZ_p$. 
By counting digits, 
the number of $t \in \QQ_p / \ZZ_p$ with $|t|_p \leq p^{\ceil{\tau M_{k-1}}}$ is exactly $p^{\ceil{\tau M_{k-1}}}$;
hence, the sum in \eqref{convol} has at most $p^{\ceil{\tau M_{k-1}}}$ non-zero terms. 
Putting it all together, we get 
\begin{align*}
|\widehat{\mu_k}(s)| 
\lesssim  p^{\ceil{\tau M_{k-1}}} |s|_p^{-1/\tau} \ln^2(|s|_p).
\end{align*}
Finally, applying \eqref{Mk size 2} gives \eqref{mu-k 3}.
\end{proof}

\section{Proof of Theorem \ref{thm 2}}\label{proof thm 2}

\subsection{Reduction}

To prove Theorem \ref{thm 2}, it suffices to prove the seemingly weaker 
\begin{thm}\label{thm 2a}
Let $g$ be a non-negative non-decreasing function defined on $(0,\infty)$ such that $\lim_{x \rightarrow \infty} g(x)=\infty$. 
For every $\tau>(m+n)/m$, there exists a Borel probability measure $\mu$ supported on $W(m,n,\tau)$ such that 
\begin{align*}
|\widehat{\mu}(\xi)| &\lesssim |\xi|_p^{-n/\tau} \ln^{n+1}(1+|\xi|_p)g(|\xi|_p) \quad \forall \xi \in \QQ_p^{mn}, \xi \neq 0.
\end{align*}
We emphasize that the constant implied by $\lesssim$ does not depend on $g$.
\end{thm}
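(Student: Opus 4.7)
The approach is a multi-dimensional adaptation of the proof of Theorem \ref{thm 1a} from Section \ref{proof of 1a}, following the strategy by which Hambrook \cite{hambrook-trans} extended Kaufman's real-variable argument to the matrix setting $E(m,n,\tau)$. The plan is to construct, for each $M \in \NN$, a family of rational approximants $r/q$ with $q \in \ZZ^n$, $r \in \ZZ^m$, $|q| \approx p^M$, and to average indicator functions of the corresponding $p$-adic balls. Specifically, I would define
\begin{align*}
Q_M &= \cbr{q \in \ZZ^n : \tfrac{1}{2}p^M \leq q_1 < p^M,\; q_1 \text{ prime},\; 0 \leq q_j < p^M \text{ for } 2 \leq j \leq n}, \\
R_M &= \cbr{r \in \ZZ^m : 0 \leq r_i < p^M \text{ for all } 1 \leq i \leq m},
\end{align*}
and, for $x \in \ZZ_p^{mn}$,
\begin{align*}
\phi_{q,r}(x) &= p^{m\ceil{\tau M}}\, \one_{B(0,1)}\!\left(p^{-\ceil{\tau M}}(xq - r)\right), \\
F_M(x) &= \frac{1}{|Q_M||R_M|}\sum_{q \in Q_M}\sum_{r \in R_M} \phi_{q,r}(x).
\end{align*}
With $(M_k)$ satisfying growth conditions analogous to \eqref{Mk size 1}--\eqref{Mk size 3} (suitably adjusted for the $mn$-dimensional dual group) and $\psi_0$ a smoothed indicator on $\ZZ_p^{mn}$, the measures $d\mu_k = \psi_0 F_{M_1} \cdots F_{M_k}\, dx$ are probability measures on $\ZZ_p^{mn}$, and the desired $\mu$ is extracted as a weak subsequential limit via Prohorov's theorem. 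The containment $\supp(\mu) \subseteq W(m,n,\tau)$ is automatic from the definitions and from the separation of the $M_k$'s.

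The main technical step is a multi-dimensional analogue of Lemma \ref{FM lemma}. Because $\phi_{q,r}$ factors as a product over the $m$ rows of $x$, the Fourier transform $\widehat{\phi_{q,r}}(s)$ splits into a product of $m$ row-integrals on $\ZZ_p^n$, one for each row $s_i$ of $s$. For each row, the hypothesis $|q_1|_p = 1$ lets me solve for $x_{i1}$ in terms of $y := x_i \cdot q - r_i$ and the remaining coordinates $x_{i2},\ldots,x_{in}$; two applications of Lemma \ref{ball lemma} then collapse the row-integral to the character $e(\cbr{r_i s_{i1}/q_1}_p)$ multiplied by indicator conditions tying the other entries of $s_i$ to $s_{i1}$ via $q$. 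Summing over $r \in R_M$ turns each character into a geometric sum that vanishes unless $|s_{i1}|_p > p^M$, producing the analogue of \eqref{FM2}; summing over $q \in Q_M$ requires the Cilleruelo--Garaev-style bound already used in the proof of Lemma \ref{FM lemma} for the prime coordinate $q_1$, together with careful counting for the free coordinates $q_2,\ldots,q_n$. Tracking the combinatorics of the resulting $n$-dimensional exponential sum should yield
\[
|\widehat{F_M}(s)| \lesssim |s|_p^{-n/\tau} \ln^{n+1}(|s|_p)
\]
in the intermediate range $p^M < |s|_p \leq p^{\ceil{\tau M}}$, along with vanishing analogous to \eqref{FM1}, \eqref{FM2}, \eqref{FM4} outside that range.

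Once this analogue is in hand, the inductive argument of Lemma \ref{mu-k lemma} transfers with only minor changes: the convolution identity \eqref{convol} holds on the dual group $(\QQ_p/\ZZ_p)^{mn}$, the number of non-zero summands is bounded by the cardinality $p^{mn\ceil{\tau M_{k-1}}}$ of the relevant truncation, and an appropriately strengthened version of \eqref{Mk size 2} absorbs this cardinality into the factor $g(|s|_p)$. Lemma \ref{Kahane-transfer} then transfers the pointwise bound from $(\QQ_p/\ZZ_p)^{mn}$ to all of $\QQ_p^{mn}$. The main obstacle is the multi-dimensional exponential sum estimate sketched in the previous paragraph: specifically, identifying how the cardinalities $|Q_M|$ and $|R_M|$, the row-by-row decay, and the enlarged sum over $(q_2,\ldots,q_n)$ combine to produce precisely the exponent $n/\tau$ on $|s|_p$ and the power $n+1$ on the logarithm, without any loss depending on $m$. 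The remaining steps are straightforward adaptations of arguments already in place for Theorem \ref{thm 1a}.
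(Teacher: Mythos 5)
Your proposal is sound, but it diverges from the paper's construction in one meaningful place: your $Q_M$ asks only that the first coordinate $q_1$ be a prime in $[\tfrac{1}{2}p^M, p^M)$, with $q_2,\ldots,q_n$ ranging freely over $\cbr{0,\ldots,p^M-1}$, whereas the paper's $Q_M^n$ requires \emph{every} coordinate $q_j$ to be a prime in $[\tfrac{1}{2}p^M, p^M)$ coprime to $p$. These are different sets for $n \geq 2$, and they lead to slightly different versions of $\widehat{\phi_{q,r}}$: the paper computes $\widehat{\phi_{q,r}}(s)$ via Fourier inversion of $\phi_r$, obtaining a condition $q \in D(s)$ phrased in terms of $\cbr{s_{ij}/q_j}_p = \cbr{s_{ij'}/q_{j'}}_p$ (which requires $|q_j|_p = 1$ for all $j$ to make sense); your change-of-variables approach solving for $x_{i1}$ instead yields the indicator condition $|s_{ij} - s_{i1}q_j/q_1|_p \leq 1$, which is meaningful even when $p \mid q_j$ for $j \geq 2$. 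Both routes work, and your counting is correct: the condition forces $\max_i |s_{i1}|_p = |s|_p$, so the ``master'' constraint $q_j \equiv s_{i^* j}q_1/s_{i^*1} \pmod{p^\ell}$ (for the $i^*$ achieving the max) pins down each $q_j \in [0,p^M)$ uniquely when $\ell > M$, and the Cilleruelo--Garaev count on $q_1$ then proceeds exactly as in Lemma \ref{FM lemma}. Interestingly, because your $|Q_M|$ is larger than the paper's $|Q_M^n|$ by a factor of roughly $(\ln p^M)^{n-1}$, the normalization $|Q_M|^{-1}$ actually produces the cleaner bound $|\widehat{F_M}(s)| \lesssim |s|_p^{-n/\tau}\ln^2(|s|_p)$ rather than the paper's $\ln^{n+1}$; both suffice for the stated theorem, but your construction is, if anything, marginally sharper. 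The remaining steps (the convolution Lemma \ref{mu-k lemma mn} with the $p^{mn\ceil{\tau M_{k-1}}}$ count, Prohorov, Kahane transfer, support in $W(m,n,\tau)$) go through unchanged as you describe. One small point of hygiene: you should record explicitly that $q_1 \geq \tfrac{1}{2}p^M$ and $q_1$ prime together force $|q_1|_p = 1$ (under the standing $M \geq 2$ for $p=2$), since you use this to justify the change of variables.
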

The proof that Theorem \ref{thm 2a} implies Theorem \ref{thm 2} is analogous to the proof in Section \ref{reduction} that Theorem \ref{thm 1a} implies Theorem \ref{thm 1}.

\subsection{Proof of Theorem \ref{thm 2a}}\label{proof of 2a}

Let $\tau > (m+n)/m$. Let $g$ be any non-negative non-decreasing function defined on $(0,\infty)$ such that $\lim_{x \rightarrow \infty} g(x)=\infty$. 
For each $M \in \NN$, define $Q_M$ and $R_M$ as in Section \ref{proof of 1a}. Then
\begin{align*}
Q_M^n &= \cbr{q \in \ZZ^n : \frac{1}{2}p^{M} \leq q_j < p^M, \; |q_j|_p = 1, \; q_j \; \text{prime} \;\; \forall 1 \leq j \leq n }, \\
R_M^m &= \cbr{r \in \ZZ^m : 0 \leq r_i < p^M \;\; \forall 1 \leq i \leq m}.
\end{align*}
Note that $Q_M$ is non-empty unless $p=2$ and $M=1$. For everything that follows, we make the standing assumption that $M \geq 2$ if $p=2$. 
For each $q \in Q_M^n$ and $r \in R_M^m$, define the function $\phi_{q,r}$ on $\ZZ_{p}^{mn}$ by 
$$
\phi_{q,r}(x) = p^{m\ceil{\tau M}} \one_{B(0,1)}(p^{-\ceil{\tau M}}(xq-r)) \quad \forall x \in \ZZ_p^{mn}.
$$
For each $M \in \NN$, define the function $F_M$ on $\ZZ_p^{mn}$ by 
$$
F_M(x) 
= |Q_M^n|^{-1}|R_M^m|^{-1} \sum_{q \in Q_M^n} \sum_{r \in R_M^m} \phi_{q,r}(x)
\quad \forall x \in \ZZ_p^{mn}.
$$
Choose a strictly increasing sequence of non-negative integers $(M_k)_{k=0}^{\infty}$ such that for all $k \in \NN$ 
\begin{align}
\label{Mk size mn 1} 
\ceil{\tau M_{k-1}} &< M_k, \\
\label{Mk size mn 2} 
p^{mn\ceil{\tau M_{k-1}}} &< g(p^{M_k}). 
\end{align}
Let $\psi_0$ be any non-negative function on $\ZZ_p^{mn}$ such that
\begin{align}
\label{psi0 normal mn}
\widehat{\psi_0}(0) &=1, \\
\label{psi0 decay mn}
\widehat{\psi_0}(s) &= 0 \quad \text{ for all } s \in (\QQ_p/\ZZ_p)^{mn} \text{ with } |s|_p > p^{\ceil{\tau M_0}}.
\end{align}
In light of Lemma \ref{ball lemma}, we may choose, for example, 
$\psi_0 = \one_{B(0,1)}$.  
For each $k \in \NN$, define the measure $\mu_k$ on $\ZZ_p^{mn}$ by  
$$
d \mu_k(x) = \psi_0(x) F_{M_1}(x) \cdots F_{M_k}(x)  dx.
$$ 
For notational convenience in Lemma \ref{mu-k lemma} below, 
we define $d\mu_{-1}(x) = d\mu_0(x) = \psi_0(x) dx$. 
For each $s \in (\QQ_p / \ZZ_p)^{mn}$, define
$$
D(s) = \cbr{q \in \ZZ^n: \cbr{s_{ij}/q_j}_p = \cbr{s_{ij'}/q_{j'}}_p \; \forall 1 \leq i \leq m, \; 1 \leq j, j' \leq n }.
$$

The proof proceeds by the following sequence of lemmas.

\begin{lem}\label{phi lemma mn}
For all $M \in \NN$, $q \in Q_M^n$, $r \in R_M^m$, and $s \in (\QQ_p / \ZZ_p)^{mn}$, 
$$
\widehat{\phi_{q,r}}(s) = \left\{
\begin{array}{cl}
e( \sum_{i=1}^{m} \cbr{ r_i s_{i1} / q_{1} }_p  ) & \text{if } |s|_p \leq p^{\ceil{\tau M}} \text{ and } q \in D(s) \\
0 & \text{otherwise }
\end{array} \right.
$$
\end{lem}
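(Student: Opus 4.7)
The plan is to compute $\widehat{\phi_{q,r}}(s) = \int_{\ZZ_p^{mn}} e(\cbr{x \cdot s}_p) \phi_{q,r}(x) \, dx$ by performing a linear change of variables that diagonalizes the constraint in the indicator function defining $\phi_{q,r}$, thereby reducing the integral to a product of one-dimensional integrals amenable to Lemma \ref{ball lemma}. For fixed $q \in Q_M^n$, I would introduce new coordinates $y$ on $\ZZ_p^{mn}$ by $y_{i1} = \sum_{j=1}^n q_j x_{ij}$ and $y_{ij} = x_{ij}$ for $j \geq 2$. Since $|q_j|_p = 1$ for every $j$, the inverse $x_{i1} = q_1^{-1}(y_{i1} - \sum_{j \geq 2} q_j y_{ij})$ maps $\ZZ_p^{mn}$ onto itself, and the Jacobian $q_1^m$ has unit $p$-adic absolute value, so $dx = dy$. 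In these coordinates the $i$-th component of $xq - r$ is simply $y_{i1} - r_i$, and substituting the inverse formulas into $x \cdot s$ and splitting via \eqref{additive} gives
\begin{align*}
e(\cbr{x \cdot s}_p) = \prod_{i=1}^m e\bigl(\cbr{y_{i1} s_{i1}/q_1}_p\bigr) \prod_{i=1}^m \prod_{j=2}^n e\bigl(\cbr{y_{ij}(s_{ij} - q_j s_{i1}/q_1)}_p\bigr).
\end{align*}

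The integral then factors over all $(i,j)$ pairs. Applying Lemma \ref{ball lemma} to each integral $\int_{\ZZ_p} e(\cbr{y_{ij}(s_{ij} - q_j s_{i1}/q_1)}_p)\, dy_{ij}$ with $j \geq 2$ shows it equals $1$ when $|s_{ij} - q_j s_{i1}/q_1|_p \leq 1$ and $0$ otherwise. Dividing by $q_j$ (which has unit absolute value) transforms this non-vanishing condition into $s_{ij}/q_j - s_{i1}/q_1 \in \ZZ_p$, equivalently $\cbr{s_{ij}/q_j}_p = \cbr{s_{i1}/q_1}_p$; requiring this for all $i$ and all $j \geq 2$ is precisely the condition $q \in D(s)$ (the case $j=1$ is trivial). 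Hence if $q \notin D(s)$, at least one factor vanishes and $\widehat{\phi_{q,r}}(s) = 0$.

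When $q \in D(s)$, each remaining $y_{i1}$-integral $p^{\ceil{\tau M}} \int_{B(r_i, p^{-\ceil{\tau M}})} e(\cbr{y_{i1} s_{i1}/q_1}_p) \, dy_{i1}$ evaluates by Lemma \ref{ball lemma} to $e(\cbr{r_i s_{i1}/q_1}_p)$ when $|s_{i1}|_p \leq p^{\ceil{\tau M}}$ (recall $|q_1|_p = 1$) and to $0$ otherwise. Moreover, $q \in D(s)$ forces $s_{ij}/q_j - s_{i1}/q_1 \in \ZZ_p$ for every $j$, and the ultrametric inequality together with the fact that nonzero representatives of $\QQ_p/\ZZ_p$ have $|\cdot|_p > 1$ then give $|s_{ij}|_p = |s_{i1}|_p$ for every $j$, so $|s|_p \leq p^{\ceil{\tau M}}$ is equivalent to $|s_{i1}|_p \leq p^{\ceil{\tau M}}$ for every $i$. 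Combining, the product evaluates to $e(\sum_{i=1}^m \cbr{r_i s_{i1}/q_1}_p)$ exactly when $|s|_p \leq p^{\ceil{\tau M}}$ and $q \in D(s)$, and vanishes otherwise, as required. The only mildly delicate step is tracking the non-vanishing conditions from the off-diagonal integrals into the defining property of $D(s)$ and reducing the norm condition on $s$ to one on the first column; neither poses a serious obstacle.
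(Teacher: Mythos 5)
Your proof is correct, and it takes a genuinely different route from the paper's. The paper defines an auxiliary function $\phi_r$ on $\ZZ_p^m$, expands it via Fourier inversion as a sum over characters $k \in (\QQ_p/\ZZ_p)^m$, writes $\phi_{q,r}(x)=\phi_r(xq)$, and then applies Fubini to reduce $\widehat{\phi_{q,r}}(s)$ to a sum over $k$ of $\widehat{\phi_r}(k)$ times a product of one-dimensional integrals $\int_{\ZZ_p} e(\cbr{x_{ij}(s_{ij}-k_iq_j)}_p)\,dx_{ij}$; the condition $q\in D(s)$ then emerges as the consistency requirement for there to exist a $k$ making all factors nonzero. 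You instead diagonalize the constraint in $\phi_{q,r}$ directly with the unimodular linear substitution $y_{i1}=\sum_j q_jx_{ij}$, $y_{ij}=x_{ij}$ for $j\geq 2$, which factors the integrand into one-dimensional pieces handled immediately by Lemma \ref{ball lemma}; the condition $q\in D(s)$ appears as the vanishing criterion for the off-diagonal factors. Both arrive at the same one-dimensional computations, but your change-of-variables argument avoids Fourier inversion and the Fubini interchange entirely, and makes the role of the invertibility of $q_1$ (and more generally $|q_j|_p=1$) more transparent. The one detail worth making explicit, which you do handle correctly, is verifying that the substitution preserves $\ZZ_p^{mn}$ and has $p$-adic unimodular Jacobian $q_1^m$, so that $dx=dy$; and the observation that $q\in D(s)$ forces $|s_{ij}|_p=|s_{i1}|_p$ via the ultrametric inequality, which lets you replace the per-column bound $|s_{i1}|_p\leq p^{\ceil{\tau M}}$ with the global bound $|s|_p\leq p^{\ceil{\tau M}}$ in the statement.
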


\begin{lem}\label{FM lemma mn}
For all $M \in \NN$ and $s \in (\QQ_p / \ZZ_p)^{mn}$, 
\begin{IEEEeqnarray}{rCll}
\label{FM1 mn}
\widehat{F_M}(s) & = & 1 & \quad \text{if } s  =  0 \\
\label{FM2 mn}
\widehat{F_M}(s) & = & 0 & \quad \text{if } 0  <  |s|_p  \leq  p^M \\
\label{FM3 mn}
|\widehat{F_M}(s)| 
&\lesssim & |s|^{-n/\tau} \ln^{n+1} (|s|_p)   
& \quad \text{if }  p^M  <  |s|_p  \leq  p^{\ceil{\tau M}} \\
\label{FM4 mn}
\widehat{F_M}(s) & = & 0 & \quad \text{if }  |s|_p  >  p^{\ceil{\tau M}}
\end{IEEEeqnarray}
\end{lem}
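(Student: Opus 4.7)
The plan is to extend the argument of Lemma \ref{FM lemma} from the $m=n=1$ case to general $m,n$, making essential use of the tensor structure of $\phi_{q,r}$. Lemma \ref{phi lemma mn} together with the definition of $F_M$ immediately yields
\begin{align*}
\widehat{F_M}(s) = \frac{1}{|Q_M^n||R_M^m|} \sum_{q \in Q_M^n \cap D(s)} \prod_{i=1}^m \sum_{0 \leq r_i < p^M} e(\cbr{r_i s_{i1}/q_1}_p)
\end{align*}
whenever $|s|_p \leq p^{\ceil{\tau M}}$, and $\widehat{F_M}(s)=0$ otherwise, which disposes of \eqref{FM4 mn}. Plugging in $s=0$ uses $D(0)=Q_M^n$ and the fact that every inner sum equals $p^M = |R_M|$ to give \eqref{FM1 mn}.

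For \eqref{FM2 mn}, suppose $0 < |s|_p \leq p^M$ and $q \in D(s)$. If $s_{i1} = 0$ for every $i$, then $\cbr{s_{ij}/q_j}_p = \cbr{s_{i1}/q_1}_p = 0$ for all $i,j$, which would force $s=0$ in $(\QQ_p/\ZZ_p)^{mn}$, a contradiction. So some $s_{i1}\neq 0$; since $|q_1|_p = 1$ and $|s_{i1}|_p \leq p^M$, the geometric series identity \eqref{geo-frac} from the proof of Lemma \ref{FM lemma} makes the $i$-th factor of the product vanish, killing the whole summand.

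Case \eqref{FM3 mn} is the bulk of the work. Writing $|s|_p = p^\ell$ with $M < \ell \leq \ceil{\tau M}$, the plan is to bound each nonvanishing factor in the $r$-product by $\min\{\|\cbr{s_{i1}/q_1}_p\|^{-1}, p^M\}$ as in \eqref{geo-frac-inequality}, then dyadically decompose the sum over $q_1 \in Q_M$ according to the values $\|\cbr{s_{i1}/q_1}_p\| \in [p^{-k_i}, p^{-k_i+1})$ for $i=1,\ldots,m$; when $q \in D(s)$ these values are determined by $q_1$ alone. The Cilleruelo--Garaev-style prime-divisor estimate from the proof of Lemma \ref{FM lemma} then controls the number of admissible $q_1$ in each dyadic class. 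For each remaining coordinate $j=2,\ldots,n$, the constraint $\cbr{s_{ij}/q_j}_p = \cbr{s_{i1}/q_1}_p$ (all $i$) is handled by an analogous divisor argument, contributing one additional factor of $\ln p^M$ per coordinate.

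The main obstacle is the combined bookkeeping: the normalization $|Q_M^n|^{-1}|R_M^m|^{-1} \approx (\ln p^M)^n p^{-(m+n)M}$ must absorb the $m$-fold product of dyadic $r$-bounds together with the $n$-fold constrained count of $q$'s so as to yield exactly $|s|_p^{-n/\tau}\ln^{n+1}(|s|_p)$. The $n$ divisor estimates (one per coordinate of $q$) contribute $n$ logarithms, and the extra logarithm arises from summing the dyadic decomposition over the dominant index $\max_i k_i$; the exponent $|s|_p^{-n/\tau}$ is recovered from $\ell \leq \ceil{\tau M} \leq 1 + \tau M$. Carefully verifying the bookkeeping, especially in the subcase $\min_i k_i \leq M < \max_i k_i$ where the $\min/\max$ structure in the one-dimensional bound changes behavior across coordinates, is the step that requires the most attention.
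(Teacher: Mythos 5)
Your handling of \eqref{FM1 mn}, \eqref{FM2 mn}, \eqref{FM4 mn} is essentially the paper's. The plan for \eqref{FM3 mn}, however, contains a wrong claim and a genuine gap.

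The wrong claim is your treatment of $q_2,\ldots,q_n$. You say each remaining constraint $\cbr{s_{ij}/q_j}_p = \cbr{s_{i1}/q_1}_p$ is ``handled by an analogous divisor argument, contributing one additional factor of $\ln p^M$ per coordinate.'' In fact, for $q\in D(s)$ and $\ell>M$, taking $i=i_0$ with $|s_{i_0 1}|_p=p^\ell$, this constraint is equivalent to $q_j\equiv q_1 s_{i_0 j}s_{i_0 1}^{-1}\pmod{p^\ell}$ in $\ZZ_p$, and since $\tfrac{1}{2}p^M\le q_j<p^M\le p^\ell$ there is at most \emph{one} integer $q_j$ satisfying it. This is the paper's key structural observation: $(q_1,\ldots,q_n)\mapsto q_{j_0}$ is an injection of $Q_M^n\cap D(s)$ into $Q_M$, so the $q$-sum collapses to a sum over a single prime with no counting loss whatsoever. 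No divisor argument applies to these coordinates, and they contribute no $\ln p^M$ factors.

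The gap is the $m$-fold dyadic decomposition. You propose to partition $Q_M$ by indices $(k_1,\ldots,k_m)$ and count the cells by Cilleruelo--Garaev, but that estimate counts primes $q_1$ subject to a \emph{single} constraint $p^{-k}\le\|\cbr{s_{i1}/q_1}_p\|<p^{-k+1}$; you supply no device for $m$ simultaneous constraints, and the trouble you flag in the ``subcase $\min_i k_i\le M<\max_i k_i$'' is a symptom of this. The paper sidesteps the issue entirely: it dyadically decomposes in the \emph{single} index $k$ attached to $(i_0,j_0)$ with $|s_{i_0 j_0}|_p=|s|_p$, and bounds the remaining $m-1$ factors in the $r$-product trivially by $p^M$ (display \eqref{triv geo-frac-inequality mn}). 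Combined with the injection above and $|Q_M^n|^{-1}\approx(\ln p^M)^n p^{-nM}$, this gives $|s|_p^{-n/\tau}\ln^{n+1}(|s|_p)$ cleanly: the $n$ logarithms come from the normalization $|Q_M^n|^{-1}$, the $(n+1)$-st comes from summing over $k=1,\ldots,\ell$ (with $\ell\lesssim\ln|s|_p$), and the CG factor $\ln p^{M+\ell-k+1}/\ln p^M\lesssim_\tau 1$ is absorbed. Your accounting ``$n$ divisor estimates contribute $n$ logarithms'' therefore misattributes their source.
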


\begin{lem}\label{mu-k lemma mn}
For all integers $k \geq 0$ and all $s \in (\QQ_p / \ZZ_p)^{mn}$, 
\begin{IEEEeqnarray}{rCll}
\label{mu-k 1 mn}
\widehat{\mu_k}(s) & = & 1 & \quad \text{if }  s  =  0 \\
\label{mu-k 2 mn}
\widehat{\mu_k}(s) & = & \widehat{\mu_{k-1}}(s) &  \quad \text{if }  0  <  |s|_p  \leq  p^{M_k} \\
\label{mu-k 3 mn}
|\widehat{\mu_k}(s)| 
&\lesssim & |s|^{-n/\tau} \ln^{n+1} (|s|_p) g(|s|_p)
& \quad \text{if }  p^{M_k}  <  |s|_p  \leq  p^{\ceil{\tau M_k}} \\
\label{mu-k 4 mn}
\widehat{\mu_k}(s) & = & 0 & \quad \text{if }  |s|_p  >  p^{\ceil{\tau M_k}}
\end{IEEEeqnarray}
\end{lem}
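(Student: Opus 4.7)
The plan is to mirror the induction argument used for Lemma \ref{mu-k lemma}, adjusting only for the multi-dimensional geometry. The base case $k=0$ is immediate: since $d\mu_0 = \psi_0\,dx$, we have $\widehat{\mu_0} = \widehat{\psi_0}$, so \eqref{psi0 normal mn} gives \eqref{mu-k 1 mn}, \eqref{psi0 decay mn} gives \eqref{mu-k 4 mn}, and \eqref{mu-k 2 mn}--\eqref{mu-k 3 mn} are vacuous because $M_0 < \ceil{\tau M_0}$ and the range in \eqref{mu-k 3 mn} is covered by the decay hypothesis on $\widehat{\psi_0}$.

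For the inductive step, assuming the lemma for $k-1$, I would apply Fourier inversion and Fubini (as in the proof of Lemma \ref{mu-k lemma}) to obtain the convolution identity
\[
\widehat{\mu_k}(s) \;=\; \widehat{F_{M_k}\mu_{k-1}}(s) \;=\; \sum_{t \in (\QQ_p/\ZZ_p)^{mn}} \widehat{F_{M_k}}(s-t)\,\widehat{\mu_{k-1}}(t).
\]
By the inductive hypothesis, a summand is non-zero only when $|t|_p \leq p^{\ceil{\tau M_{k-1}}}$, and by Lemma \ref{FM lemma mn} only when either $t=s$ or $p^{M_k} < |s-t|_p \leq p^{\ceil{\tau M_k}}$. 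Combined with \eqref{Mk size mn 1} and the ultrametric inequality (so $|s-t|_p = |s|_p$ whenever $|t|_p \leq p^{\ceil{\tau M_{k-1}}} < p^{M_k} < |s-t|_p$), this shows that if $|s|_p > p^{\ceil{\tau M_k}}$ every term vanishes, giving \eqref{mu-k 4 mn}; and if $|s|_p \leq p^{M_k}$, only $t=s$ survives, yielding $\widehat{\mu_k}(s) = \widehat{F_{M_k}}(0)\widehat{\mu_{k-1}}(s) = \widehat{\mu_{k-1}}(s)$, which gives both \eqref{mu-k 2 mn} and, at $s=0$, \eqref{mu-k 1 mn}.

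The remaining case is $p^{M_k} < |s|_p \leq p^{\ceil{\tau M_k}}$. Every non-zero summand then has $|s-t|_p = |s|_p$, so \eqref{FM3 mn} of Lemma \ref{FM lemma mn} yields $|\widehat{F_{M_k}}(s-t)| \lesssim |s|_p^{-n/\tau}\ln^{n+1}(|s|_p)$, while $|\widehat{\mu_{k-1}}(t)| \leq \widehat{\mu_{k-1}}(0) = 1$. The key counting step is that the number of $t \in (\QQ_p/\ZZ_p)^{mn}$ with $|t|_p \leq p^{\ceil{\tau M_{k-1}}}$ is exactly $p^{mn\ceil{\tau M_{k-1}}}$ (by counting $p$-adic digits in each of the $mn$ coordinates). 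Combining these gives
\[
|\widehat{\mu_k}(s)| \;\lesssim\; p^{mn\ceil{\tau M_{k-1}}}\, |s|_p^{-n/\tau}\,\ln^{n+1}(|s|_p),
\]
and then \eqref{Mk size mn 2} together with monotonicity of $g$ bounds $p^{mn\ceil{\tau M_{k-1}}} \leq g(p^{M_k}) \leq g(|s|_p)$, producing \eqref{mu-k 3 mn}.

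The only conceptual difference from the proof of Lemma \ref{mu-k lemma} is the counting of non-zero terms in the convolution, which produces $p^{mn\ceil{\tau M_{k-1}}}$ rather than $p^{\ceil{\tau M_{k-1}}}$; this is precisely why the growth condition \eqref{Mk size mn 2} on the sequence $(M_k)$ is calibrated with the exponent $mn\ceil{\tau M_{k-1}}$ in the multi-dimensional setting. I expect no serious obstacle beyond bookkeeping the multi-dimensional exponents, since Lemma \ref{FM lemma mn} already encapsulates all of the intricate estimation of the exponential sum that was the difficult part of the one-dimensional case.
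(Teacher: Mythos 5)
Your proof is correct and takes essentially the same approach as the paper: the authors omit this proof, remarking only that it is virtually identical to that of Lemma~\ref{mu-k lemma}, and your argument reproduces that proof with the single necessary adjustment---the count of non-zero summands in the convolution becomes $p^{mn\ceil{\tau M_{k-1}}}$ rather than $p^{\ceil{\tau M_{k-1}}}$, which is exactly why \eqref{Mk size mn 2} carries the exponent $mn$.
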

Unlike Lemma \ref{phi lemma}, Lemma \ref{phi lemma mn} is not quite an immediate corollary of Lemma \ref{ball lemma}. 
The proof of Lemma \ref{FM lemma mn} is a generalization of the proof of Lemma \ref{FM lemma}. 
The proofs of Lemmas \ref{phi lemma mn} and \ref{FM lemma mn} are given in 
Sections \ref{proofs of lemmas mn 1} and \ref{proofs of lemmas mn 2}, respectively. 
We omit the proof of Lemma \ref{mu-k lemma mn} because it is virtually identical to the proof of Lemma \ref{mu-k lemma} in Section \ref{proofs of lemmas 2}.

The rest of the proof of Theorem \ref{thm 2a} proceeds as in Section \ref{proof of 1a}, so we omit it.

%
%
%


\subsection{Proof of Lemma \ref{phi lemma mn}}\label{proofs of lemmas mn 1}

\begin{proof}
Let $M \in \NN$, $q \in Q_M^n$, $r \in R_M^m$, and $s \in \QQ_p/\ZZ_p$ be given. 
Define the function $\phi_r$ on $\ZZ_p^m$ by 
$$
\phi_r(x) = p^{m\ceil{\tau M}} \one_{B(0,1)}(p^{-\ceil{\tau M}}(x-r)) \quad \forall x \in \ZZ_p^m.
$$
By Lemma \ref{ball lemma}, for all $k \in (\QQ_p/\ZZ_p)^m$,  
\begin{align}\label{phi r eq}
\widehat{\phi_r}(k) = 
\left\{
\begin{array}{cl}
e( \cbr{ r \cdot k }_p  ) & \text{if } |k|_p \leq p^{\ceil{\tau M}} \\
0 & \text{if } |k|_p > p^{\ceil{\tau M}}
\end{array} \right.
\end{align}
By Fourier inversion (see \cite[p.102]{folland-book-abstract} or \cite[p.120]{T75}), 
$$
\phi_r(x) = \sum_{k \in (\QQ_p / \ZZ_p)^m} \widehat{\phi_r}(k) e(-\cbr{k \cdot x}_p) \quad \forall x \in \ZZ_p^m.
$$
Therefore, since $|q_j|_p = 1$ for all $1 \leq j \leq n$, 
$$
\phi_{q,r}(x) = \phi_{r}(xq) = \sum_{k \in (\QQ_p / \ZZ_p)^m} \widehat{\phi_r}(k) e(-\cbr{k \cdot xq}_p) \quad \forall x \in \ZZ_p^{mn}.
$$
By Fubini's theorem, 
\begin{align}\label{phi qr eq 1}
\widehat{\phi_{q,r}}(s)
&=
\sum_{k \in (\QQ_p / \ZZ_p)^m} \widehat{\phi_r}(k) \int_{\ZZ_p^{mn}} e(\cbr{s \cdot x}_p) e(-\cbr{k \cdot xq}_p)  dx \\
\notag
&= 
\sum_{k \in (\QQ_p / \ZZ_p)^m} \widehat{\phi_r}(k) \prod_{i=1}^{m} \prod_{j=1}^{n} \int_{\ZZ_p} e(\cbr{x_{ij}(s_{ij} - k_i q_j)}_p) dx_{ij}.
\end{align}
Fix $k \in (\QQ_p / \ZZ_p)^m$. 
By Lemma \ref{ball lemma}, 
$$
\int_{\ZZ_p} e(\cbr{x_{ij}(s_{ij} - k_i q_j)}_p) dx_{ij}
=
\left\{
\begin{array}{cl}
1 & \text{if } |s_{ij} - k_i q_j|_p \leq 1 \\
0 & \text{otherwise }
\end{array} \right.
$$
Note that, since $k_i \in \QQ_p/\ZZ_p$ and $|q_j|_p=1$, $|s_{ij} - k_i q_j|_p \leq 1$ is equivalent to $k_i = \cbr{s_{ij}/q_j}_p$. 
Thus \eqref{phi qr eq 1} gives
$$
\widehat{\phi_{q,r}}(s) = 
\left\{
\begin{array}{cl}
\widehat{\phi_r}(\cbr{s_{11}/q_1}_p, \ldots, \cbr{s_{m1}/q_1}_p) & \text{if } q \in D(s) \\
0 & \text{otherwise }
\end{array} \right.
$$
To complete the proof, use \eqref{additive}, \eqref{phi r eq}, and the fact that for all $\ell \geq 0$ and $y \in \QQ_p$, we have $|y|_p \leq p^{\ell}$ if and only if $|\cbr{y}_p|_p \leq p^{\ell}$.
\end{proof}

\subsection{Proof of Lemma \ref{FM lemma mn}}\label{proofs of lemmas mn 2}

\begin{proof}
Let $M \in \NN$ and $s \in (\QQ_p/\ZZ_p)^{mn}$. 
Choose $1 \leq i_0 \leq m$ and $1 \leq j_0 \leq n$ such that $|s_{i_0 j_0}|_p = |s|_p$. 
For $|s|_p > p^{\ceil{\tau M}}$, Lemma \ref{phi lemma mn} implies \eqref{FM4 mn}. 
For $|s|_p \leq p^{\ceil{\tau M}}$, \eqref{additive}, Lemma \ref{phi lemma mn}, and the definition of $D(s)$ give 
\begin{align}\label{FM5 mn}
\widehat{F_M}(s) 
= |Q_M^n|^{-1}|R_M^m|^{-1} 
\sum_{q \in Q_M^n \cap D(s)} 
%
\prod_{i=1}^{m}
\sum_{0 \leq r_i < p^M } e( \cbr{ r_i s_{ij_0} / q_{j_0} }_p  )
\end{align}
Setting $s = 0$ yields \eqref{FM1 mn}. 

From now on, assume $0 < |s|_p \leq p^{\ceil{\tau M}}$. 
So $|s|_p = p^{\ell}$ for some $\ell \in \cbr{1, \ldots, \ceil{\tau M}}$.  
We will study the sum over $r_{i_0}$ in \eqref{FM5}.   
Fix $q \in Q_M^n \cap D(s)$.  
Since $|q_{j_0}|_p = 1$, we have $|s_{i_0j_0}/q_{j_0}|_p = |s_{i_0j_0}|_p = |s|_p = p^{\ell}$. 
Thus the $p$-adic expansion of $s_{i_0 j_0}/q_{j_0}$ has the form 
\begin{align}\label{s/q expansion mn}
\frac{s_{i_0 j_0}}{q_{j_0}} = \sum_{i=-\ell}^{\infty} c_i p^i, \quad c_i \in \cbr{0,1,\ldots,p-1}, \; c_{-\ell} \neq 0.
\end{align}
Evidently $0 < \cbr{ s_{i_0 j_0} / q_{j_0} }_p < 1$, and so $e(\cbr{ s_{i_0 j_0} / q_{j_0} }_p) \neq 1$. 
Because of \eqref{additive}, we have the geometric summation formula 
\begin{align}\label{geo-frac mn}
\sum_{0 \leq r_{i_0} < p^{M}} e( \cbr{r_{i_0} s_{i_0 j_0} / q_{j_0}}_p ) = \frac{1 - e(\cbr{p^M s_{i_0 j_0} / q_{j_0}}_p)}{1 - e(\cbr{s_{i_0 j_0} / q_{j_0}}_p)}.
\end{align}
If $|s|_p \leq p^M$, we have $\cbr{ p^M s_{i_0 j_0} / q_{j_0} }_p = 0$; hence, the sum in \eqref{geo-frac mn} is zero. 
Applying this observation to \eqref{FM5 mn} proves \eqref{FM2 mn}.

Now only \eqref{FM3 mn} remains to be proved. 
Assume $p^M < |s|_p = p^{\ell} \leq p^{\ceil{\tau M}}$. 
For all $z \in \RR$, 
$|1 - e(z)| =$ $2|\sin(\pi z)| =$ $2\sin(\pi \| z \|) \geq$ $\pi \| z \|$,  
where $\| z \| = \min_{k \in \ZZ}|z-k|$ is the distance from $z$ to the nearest integer. 
Hence the sum in \eqref{geo-frac mn} satisfies 
\begin{align}\label{geo-frac-inequality mn}
\abs{\sum_{0 \leq r_{i_0} < p^{M}} e( \cbr{r_{i_0} s_{i_0 j_0} / q_{j_0}}_p )} \leq \min\cbr{\frac{1}{\| \cbr{s_{i_0 j_0} / q_{j_0}}_p \|},p^M}.
\end{align}
We will also need that 
\begin{align}\label{triv geo-frac-inequality mn}
\abs{\sum_{0 \leq r_{i} < p^{M}} e( \cbr{r_{i} s_{i j_0} / q_{j_0}}_p )} \leq p^M \quad \forall 1 \leq i \leq m.
\end{align}
In light of \eqref{s/q expansion mn}, 
$$
\| \cbr{s_{i_0 j_0} / q_{j_0}}_p \| 
=
\left\{
\begin{array}{ll}
\cbr{s_{i_0 j_0} / q_{j_0}}_p  = \sum_{i=-\ell}^{-1} c_i p^i & \text{if } \cbr{s_{i_0 j_0} / q_{j_0}}_p \leq 1/2 \\
1 - \cbr{s_{i_0 j_0} / q_{j_0}}_p  = 1 - \sum_{i=-\ell}^{-1} c_i p^i & \text{if } \cbr{s_{i_0 j_0} / q_{j_0}}_p  
> 1/2.
\end{array} \right.
$$
Combining \eqref{FM5 mn}, \eqref{geo-frac-inequality mn} \eqref{triv geo-frac-inequality mn}, and the fact that $p^{-\ell} \leq \| \cbr{s_{i_0 j_0} / q_{j_0}}_p \| < 1$ leads to
\begin{align}\label{min-2 mn}
|\widehat{F_M}(s)| 
\leq 
|Q_M^n|^{-1} |R_M^m|^{-1} \sum_{k=1}^{\ell} 
\sum_q p^{(m-1)M} \min\cbr{p^k,p^M}, 
\end{align}
where the inner sum runs over all $q \in Q_M^n \cap D(s)$ such that 
$p^{-k} \leq \| \cbr{s_{i_0 j_0} / q_{j_0}}_p \| < p^{-k+1}$. 
We claim $(q_1,\ldots,q_n) \mapsto q_{j_0}$ is an injection from $Q_M^n \cap D(s)$ to the set
$$
\cbr{q_{j_0} \in \ZZ : \frac{1}{2}p^M \leq q_{j_0} < p^M, \; |q_{j_0}|_p = 1, \; q_{j_0} \; \text{prime} }.
$$
This claim follows from the following two observations. First, for each $q \in Q_M^n$ and $1 \leq j \leq n$, we have 
$\cbr{s_{i_0 j_0} / q_{j_0}}_p = \cbr{s_{i_0 j} / q_{j}}_p$
if and only if $|s_{i_0 j_0} / q_{j_0} - s_{i_0 j} / q_{j}|_p \leq 1$ if and only if $|q_j - q_{j_0} s_{i_0 j} s_{i_0 j_0}^{-1}|_p \leq |s_{i_0 j_0}^{-1}|_p = p^{-\ell}$ if and only if $q_j \equiv q_{j_0} s_{i_0 j}s_{i_0 j_0}^{-1} \pmod{p^{\ell}}$.
Second, for any given $b \in \QQ_p$, there can be at most one integer $a$ satisfying $a \equiv b \pmod{p^{\ell}}$ and $\frac{1}{2}p^M \leq a < p^M \leq p^{\ell}$. 
Applying the claim to \eqref{min-2 mn} yields
\begin{align}\label{min-3 mn}
|\widehat{F_M}(s)| 
\leq 
|Q_M^n|^{-1} |R_M^m|^{-1} \sum_{k=1}^{\ell} 
\sum_{q_{j_0}} p^{(m-1)M} \min\cbr{p^k,p^M}, 
\end{align}
where the inner sum runs over all $q_{j_0} \in Q_M$ such that $p^{-k} \leq \| \cbr{s_{i_0 j_0} / q_{j_0}}_p \| < p^{-k+1}$.  
Arguing as in the proof of Lemma \ref{FM lemma} in 
Section \ref{proofs of lemmas 1}, 
we see that for each fixed $1 \leq k \leq \ell$ the number of terms in the sum over $q_{j_0}$ in \eqref{min-3 mn} is
$$
\lesssim \max\cbr{p^{M-k+1},1} \frac{\ln p^{M+\ell-k+1}}{\ln p^M}.
$$
Thus \eqref{min-3 mn} implies
\begin{align*}
|\widehat{F_M}(s)| 
&\lesssim 
|Q_M^n|^{-1} |R_M^m|^{-1} \sum_{k=1}^{\ell} p^{(m-1)M} \min\cbr{p^k,p^M} \max\cbr{p^{M-k+1},1} \frac{\ln p^{M+\ell-k+1}}{\ln p^M}.
\end{align*}
Since $p^M < |s|_p = p^\ell \leq p^{\ceil{\tau M}}$, $|Q_M^n| \approx p^{nM} / (\ln p^M)^n$, and $|R_M^m| = p^{mM}$, we obtain \eqref{FM3 mn}.
\end{proof}

\ack This work was supported by NSERC.



\noindent Robert Fraser \\ 
\textit{Address:} Department of Mathematics, University of British Columbia, Vancouver, BC V6T1Z2, Canada \\
\textit{E-mail:} \texttt{rgf@math.ubc.ca} 

\vspace{10pt}

\noindent Kyle Hambrook \\
\textit{Address:} Department of Mathematics, University of Rochester, Rochester, NY 14627, USA \\
\textit{E-mail:} \texttt{khambroo@ur.rochester.edu}

\label{lastpage}
\end{document}